\declaretheoremstyle[headfont=\normalfont]{normalhead}
\newtheorem{lemma}{Lemma}[section]
\newtheorem{theorem}[lemma]{Theorem}
\newtheorem{proposition}[lemma]{Proposition}
\newtheorem{corollary}[lemma]{Corollary}
\newtheorem{definition}[lemma]{Definition}
\newtheorem{maintheorem}{Theorem}
\newcommand{\R}{\mathbb{R}}
\newcommand{\C}{\mathbb{C}}
\newcommand{\U}{\mathrm{U}}
\newcommand{\SO}{\mathrm{SO}}
\DeclareMathOperator{\Val}{Val}
\DeclareMathOperator{\VConv}{VConv}
\DeclareMathOperator{\Conv}{Conv}
\DeclareMathOperator{\vol}{vol}
\DeclareMathOperator{\supp}{supp}
\DeclareMathOperator{\D}{\bar{\mathrm{D}}}
\DeclareMathOperator{\GL}{GL}
\DeclareMathOperator{\GW}{\mathrm{GW}}
\DeclareMathOperator{\Gr}{\mathrm{Gr}}
\DeclareMathOperator{\Kl}{\mathrm{Kl}}
\DeclareMathOperator{\MA}{\mathrm{MA}}
\renewcommand{\Re}{\operatorname{Re}}
\author{Jonas Knoerr}
\title{Unitarily invariant valuations on convex functions}
\date{}
\newcommand{\Addresses}{{
		\bigskip
		\footnotesize
		
		Jonas Knoerr, \textsc{Institute of Discrete Mathematics and Geometry, TU Wien, Wiedner Hauptstrasse 8-10, 1040 Wien, Austria}\par\nopagebreak
		\textit{E-mail address}: \texttt{jonas.knoerr@tuwien.ac.at}
		
		\medskip
	}}
\def\blfootnote{\xdef\@thefnmark{}\@footnotetext}
\begin{document}
\maketitle
\begin{abstract}
	Continuous, dually epi-translation invariant valuations on the space of finite-valued convex functions on $\mathbb{C}^n$ that are invariant under the unitary group are investigated. It is shown that elements belonging to the dense subspace of smooth valuations admit a unique integral representation in terms of two families of Monge--Amp\`ere-type operators. In addition, it is proved that homogeneous valuations are uniquely determined by restrictions to subspaces of appropriate dimension and that this information is encoded in the Fourier--Laplace transform of the associated Goodey--Weil distributions. These results are then used to show that a continuous unitarily invariant valuation is uniquely determined by its restriction to a certain finite family of subspaces of $\mathbb{C}^n$.
\end{abstract}
\blfootnote{2020 \emph{Mathematics Subject Classification}. 52B45, 26B25, 53C65, 32W20.\\
	\emph{Key words and phrases}. Convex function, valuation on functions, simple valuation, Fourier--Laplace transform.\\}

\section{Introduction} 
	One of the most important classical results in integral geometry is Hadwiger's characterization of the space $\Val(\R^n)^{\SO(n)}$ of all continuous, rigid motion invariant valuations on the space $\mathcal{K}(\R^n)$ of convex bodies, that is, all nonempty, convex, compact subsets of $\R^n$ equipped with the Hausdorff metric. Here a map $\mu:\mathcal{K}(\R^n)\rightarrow\R$ is called a valuation if 
	\begin{align*}
		\mu(K\cup L)+\mu(K\cap L)=\mu(K)+\mu(L)
	\end{align*}
	for all $K,L\in\mathcal{K}(\R^n)$ such that $K\cup L\in\mathcal{K}(\R^n)$. Hadwiger showed that $\Val(\R^n)^{\SO(n)}$ is finite dimensional and that a basis of this space is given by the intrinsic volumes \cite{HadwigerVorlesungenuberInhalt1957}. This result directly implies many classical integral geometric formulas, including kinematic formulas as well as Crofton and Cauchy-Kubota formulas.\\
	More recently, Alesker proved that Hadwiger-type theorems hold not only for rigid motion invariant valuations, but also for smaller subgroups of the affine group:
	\begin{theorem}[Alesker \cite{AleskerP.McMullensconjecture2000} Theorem 8.1]
		Let $G\subset \SO(n)$ be a compact subgroup. The space $\Val(\R^n)^{G}$ of all continuous, translation and $G$-invariant valuations on $\mathcal{K}(\R^n)$ is finite dimensional if and only if $G$ operates transitively on the unit sphere.
	\end{theorem}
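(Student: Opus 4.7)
I would tackle the two implications separately, after reducing to homogeneous components via McMullen's decomposition
\[
\Val(\R^n) = \bigoplus_{k=0}^n \Val_k(\R^n),
\]
which is preserved by the $G$-action, so that finite-dimensionality of $\Val(\R^n)^G$ is equivalent to that of each $\Val_k(\R^n)^G$.

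For the direction ``not transitive implies infinite-dimensional'', suppose $G$ does not act transitively on $S^{n-1}$. Then $S^{n-1}/G$ is a connected compact Hausdorff space with more than one point, hence uncountable, so $C(S^{n-1})^G$ is infinite-dimensional. I would then pass to $\Val_1(\R^n)^G$ via the first mixed volume: for any $L \in \mathcal{K}(\R^n)$ the functional
\[
\phi_L(K) := V(K, L, \ldots, L) = \tfrac{1}{n}\int_{S^{n-1}} h_K(u) \, dS_{n-1}(L, u)
\]
lies in $\Val_1(\R^n)$, and is $G$-invariant whenever $L$ is. By Minkowski's uniqueness theorem $L \mapsto S_{n-1}(L,\cdot)$ is injective modulo translations, so $G$-invariant bodies with support functions of the form $h_L = 1 + \varepsilon f$ for linearly independent $f \in C(S^{n-1})^G$ yield an infinite-dimensional family of linearly independent $\phi_L$.

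For the direction ``transitive implies finite-dimensional'', the cases $k \in \{0,n\}$ are trivial, so I would fix $1 \le k \le n-1$ and split $\Val_k = \Val_k^+ \oplus \Val_k^-$ by parity. For the even part, I plan to compose the Klain embedding $\Val_k^+ \hookrightarrow C(\Gr_k(\R^n))$ with the geometric input that a compact group $G$ acting transitively on $S^{n-1}$ acts with only finitely many orbits on every real Grassmannian $\Gr_k(\R^n)$; this last fact follows from the Montgomery--Samelson--Borel classification of such actions by a case-by-case check. Hence $C(\Gr_k(\R^n))^G$ is finite-dimensional, and so is $\Val_k^{+,G}$.

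The main obstacle is the odd part $\Val_k^{-,G}$, where no equally direct Klain-type embedding is available in general. Here I would invoke Alesker's Irreducibility Theorem, which makes $\Val_k^-$ a topologically irreducible continuous representation of $\GL(n,\R)$ on a Fr\'echet space; combined with standard admissibility theory for irreducible representations of real reductive groups, this implies that each $\SO(n)$-isotypic component of $\Val_k^-$ is finite-dimensional. The plan is then to combine admissibility with Schneider's cosine-transform description of the simple odd valuations for $k = n-1$ and to reduce general intermediate $k$ to this case by a $\GL(n,\R)$-equivariant descent, showing via the transitivity hypothesis that only finitely many $\SO(n)$-types contribute non-zero $G$-fixed vectors. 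This last step, marrying representation theory to the geometry of $G$-orbits, is the most delicate ingredient of the argument.
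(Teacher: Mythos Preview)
The paper does not prove this theorem at all: it is quoted as background from Alesker's work \cite{AleskerP.McMullensconjecture2000} and no argument is supplied. There is therefore no ``paper's own proof'' to compare your sketch against.

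That said, your sketch contains a genuine error. In the even case you claim that any compact $G$ acting transitively on $S^{n-1}$ has only finitely many orbits on each $\Gr_k(\R^n)$, and you propose to deduce $\dim \Val_k^{+,G}<\infty$ from $\dim C(\Gr_k(\R^n))^G<\infty$. This is false, and the present paper itself furnishes the counterexample: $\U(n)$ acts transitively on $S^{2n-1}$, yet Tasaki's result (recalled in Section~\ref{section:polynomials}) shows that the $\U(n)$-orbits on $\Gr_k(\C^n)$ are parametrized by a $\lfloor \min(k,2n-k)/2\rfloor$-dimensional simplex, hence are uncountable for $2\le k\le 2n-2$. Consequently $C(\Gr_k(\C^n))^{\U(n)}$ is infinite-dimensional, and the Klain embedding alone cannot yield finite-dimensionality. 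The fact that $\Val_k(\C^n)^{\U(n)}$ is nonetheless finite-dimensional reflects that the image of the Klain map is a very thin subspace of $C(\Gr_k)$, not that the target is small.

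Alesker's actual argument proceeds differently: both parities are handled simultaneously via his Irreducibility Theorem, which makes $\Val_k^{\pm}(\R^n)$ irreducible admissible Fr\'echet representations of $\GL(n,\R)$. Admissibility gives that every $\SO(n)$-isotypic component is finite-dimensional; transitivity of $G$ on $S^{n-1}$ then forces $G$ to contain a maximal torus of $\SO(n)$ (via the classification), so a $G$-fixed vector can lie in only finitely many $\SO(n)$-types. Your odd-case outline gestures toward this mechanism, but the even case must be treated the same way rather than through orbit-counting on Grassmannians.
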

	His results \cite{AleskerDescriptiontranslationinvariant2001} led to rapid advances in the integral geometry of these transitive compact groups \cite{BernigHadwigertypetheorem2009,BernigIntegralgeometry$G_2$2011,BernigInvariantvaluationsquaternionic2012,BernigSolanesKinematicformulasquaternionic2017,BernigVoideSpininvariantvaluations2016,KotrbatyIntegralgeometryoctonionic2020,WannererIntegralgeometryunitary2014,Wannerermoduleunitarilyinvariant2014}, in particular for complex space forms \cite{BernigEtAlIntegralgeometrycomplex2014}, as well as non-compact groups \cite{AleskerFaifmanConvexvaluationsinvariant2014,BernigFaifmanValuationtheoryindefinite2017,BernigEtAlCurvaturemeasurespseudo2022}.  Alesker \cite{AleskerHardLefschetztheorem2004} was the first to give explicit bases for $\Val(\C^n)^{\U(n)}$. For their study of kinematic formulas for $\Val(\C^n)^{\U(n)}$, Bernig and Fu \cite{BernigFuHermitianintegralgeometry2011} introduced an additional basis, the so-called \emph{Hermitian intrinsic volumes}, which are characterized by their restriction to lower dimensional subspaces in $\C^n$. Before we state their result, recall that a valuation $\mu$ on $\mathcal{K}(\C^n)$ is called $k$-homogeneous if $\mu(tK)=t^k\mu(K)$ for all $t\ge 0$, $K\in\mathcal{K}(\C^n)$. Let us also denote by $E_{k,q}\in \Gr_{k}(\C^n)$ the subspace $E_{k,q}=\C^q\times \R^{k-2q}$, where $\Gr_k(\C^n)$ denotes the space of $k$-dimensional real subspaces of $\C^n$.
	\begin{theorem}[Bernig--Fu \cite{BernigFuHermitianintegralgeometry2011} Theorem 3.2]
		\label{theorem:BernigFuHermitianIntrinsicVolumes}
		For $0\le k\le 2n$, $\max(0,k-n)\le q\le \lfloor\frac{k}{2}\rfloor$ there exists a unique $k$-homogeneous valuation $\mu_{k,q}\in \Val(\C^n)^{\U(n)}$ such that
		\begin{align*}
			\mu_{k,q}|_{E_{k,p}}=\delta_{pq}\vol_k,
		\end{align*}
		where $\vol_k$ denotes the $k$-dimensional Lebesgue measure on $E_{k,p}$. Moreover, these valuations form a basis for $\Val(\C^n)^{\U(n)}$.
	\end{theorem}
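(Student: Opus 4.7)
The plan is to show that for each $k$, the restriction map
\begin{align*}
R_k : \Val_k(\C^n)^{\U(n)} \longrightarrow \R^{r_k}, \quad \mu \longmapsto (c_p(\mu))_{p \in I_k},
\end{align*}
where $I_k := \{q \in \mathbb{Z} : \max(0,k-n) \le q \le \lfloor k/2 \rfloor\}$, $r_k := |I_k|$, and $c_p(\mu)$ is the scalar defined by $\mu|_{E_{k,p}} = c_p(\mu) \vol_k$, is a linear isomorphism. The scalar is well-defined because $E_{k,p}$ is $k$-dimensional and a $k$-homogeneous continuous translation-invariant valuation on a $k$-dimensional real vector space is proportional to $\vol_k$ by Hadwiger's theorem. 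Given the isomorphism, $\mu_{k,q}$ is the preimage of the standard basis vector $e_q \in \R^{r_k}$, so existence, uniqueness, and the basis statement at fixed $k$ follow at once, and McMullen's homogeneous decomposition $\Val(\C^n)^{\U(n)} = \bigoplus_k \Val_k(\C^n)^{\U(n)}$ then yields the basis statement for $\Val(\C^n)^{\U(n)}$ as a whole.

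The next step is to reduce to injectivity of $R_k$. Alesker's explicit construction of bases of $\Val_k(\C^n)^{\U(n)}$ (e.g. the $\{U_{k,q}\}_{q \in I_k}$ in \cite{AleskerHardLefschetztheorem2004}) gives $\dim \Val_k(\C^n)^{\U(n)} = r_k$, so $R_k$ is a map between real vector spaces of equal finite dimension $r_k$. By Klain's embedding theorem, a $k$-homogeneous $\mu \in \Val(\C^n)$ is determined by its Klain function $\Kl_\mu : \Gr_k(\C^n) \to \R$, with $c_p(\mu) = \Kl_\mu(E_{k,p})$; and $\Kl_\mu$ is $\U(n)$-invariant whenever $\mu$ is. Injectivity of $R_k$ therefore amounts to the rigidity statement that a $\U(n)$-invariant Klain function arising from a valuation and vanishing at every $E_{k,p}$ must vanish identically on $\Gr_k(\C^n)$. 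This cannot be concluded from orbit considerations alone: the orbit space $\Gr_k(\C^n)/\U(n)$ is in general of positive dimension (parametrized by multi-Kähler angles), and the subspaces $E_{k,p}$ represent only finitely many of its orbits.

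The main obstacle is thus verifying this rigidity. My approach would be to compute the $r_k \times r_k$ matrix $M = (\Kl_{U_{k,q}}(E_{k,p}))_{p,q \in I_k}$ in Alesker's basis and prove $\det M \ne 0$. Each $U_{k,q}$ is constructed by Cauchy-Kubota-type integration of lower-order intrinsic volumes over the complex Grassmannian $\Gr_q^{\C}(\C^n)$, so every entry of $M$ reduces to an explicit kinematic integral whose integrand encodes the relative position of a variable complex $q$-plane and the fixed subspace $E_{k,p} = \C^p \times \R^{k-2p}$. The technical heart of the argument is the evaluation of these integrals and the verification of non-degeneracy of $M$, most naturally either by ordering the indices so that $M$ becomes triangular with explicit non-vanishing diagonal entries, or by recasting the entries as integrals of symmetric functions of Kähler angles over complex Grassmannians and invoking known orthogonality relations. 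Once $\det M \ne 0$ is established, the Hermitian intrinsic volumes $\mu_{k,q}$ are recovered as the linear combinations of the $\{U_{k,q'}\}$ prescribed by $M^{-1}$, completing existence, uniqueness, and the basis claim in one stroke.
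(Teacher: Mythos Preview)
The statement you are trying to prove is not proved in this paper at all. It is quoted verbatim as Theorem~3.2 of Bernig--Fu \cite{BernigFuHermitianintegralgeometry2011} and serves only as background and motivation for the functional-analytic results that follow; the present article nowhere supplies its own argument for it. There is therefore no ``paper's own proof'' against which to compare your attempt.

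As for the proposal on its own terms: the overall strategy is correct and standard. You rightly reduce the question to injectivity of the restriction map $R_k$ via the dimension count $\dim\Val_k(\C^n)^{\U(n)}=r_k$ from Alesker, and you correctly observe that injectivity does not follow from the orbit structure of $\Gr_k(\C^n)/\U(n)$ alone, since the $E_{k,p}$ account for only finitely many orbits. The genuine gap is that you stop exactly where the content begins: you neither compute the matrix $M=(\Kl_{U_{k,q}}(E_{k,p}))$ nor prove $\det M\neq 0$, but only speculate that it might be triangular or amenable to some unspecified orthogonality relations. The kinematic integrals arising from Alesker's $U_{k,q}$ are not straightforward to evaluate, and triangularity is not apparent without doing the calculation. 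In Bernig--Fu's actual argument the non-degeneracy is obtained by a more direct route, working with explicit invariant differential forms on the sphere bundle whose restrictions to the $E_{k,p}$ can be read off by inspection, yielding a visibly triangular system. Your plan via the $U_{k,q}$ could in principle be pushed through, but as written it is a strategy rather than a proof.
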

	Let us add some more context to this description. By a result due to Hadwiger \cite{HadwigerVorlesungenuberInhalt1957}, the restriction of any $k$-homogeneous, translation invariant, continuous valuation on $\mathcal{K}(\R^n)$ to a $k$-dimensional subspace $E\subset \R^n$ is a multiple of the $k$-dimensional Lebesgue measure, that is, $\mu|_E=\Kl_\mu(E)\cdot \vol_k$ for some $\Kl_\mu(E)\in\R$. The function $\Kl_\mu:\Gr_k(\R^n)\rightarrow\R$ is called the \emph{Klain function} of $\mu$, and due to a result by Klain \cite{KlainshortproofHadwigers1995} it determines $\mu$ uniquely if $\mu$ is an \emph{even }valuation, that is, if it satisfies $\mu(-K)=\mu(K)$ for all $K\in\mathcal{K}(\R^n)$. This applies in particular to any $k$-homogeneous valuation $\mu\in\Val(\C^n)^{\U(n)}$, which is thus uniquely determined by the $\U(n)$-invariant function $\Kl_\mu$ on $\Gr_k(\C^n)$. By associating to every subspace $E\in\Gr_k(\C^n)$ a quantity called the \emph{multiple Kähler angle}, Tasaki \cite{TasakiGeneralizationKahlerangle2001} showed that the orbits of $\U(n)$ on $\Gr_k(\C^n)$ are in $1$-to-$1$ correspondence with the $\lfloor \frac{\min(k,2n-k)}{2}\rfloor$-dimensional simplex. The orbits containing the spaces $E_{k,q}$ correspond exactly to the vertices of this simplex under this identification. Theorem \ref{theorem:BernigFuHermitianIntrinsicVolumes} thus implies that any $k$-homogeneous valuation $\mu\in \Val(\C^n)^{\U(n)}$ is uniquely determined by its restriction to these extremal orbits. \\
	In this article we consider an extension of these results to the functional setting. The notion of valuation generalizes as follows. If $\mathcal{F}$ is a family of (extended) real-valued functions, then we call $\mu:\mathcal{F}\rightarrow\R$ a valuation if
	\begin{align*}
		\mu(f)+\mu(h)=\mu(f\vee h)+\mu(f\wedge h)
	\end{align*}
	whenever the pointwise maximum $f\vee h$ and pointwise minimum $f\wedge h$ belong to $\mathcal{F}$. If $\mathcal{F}$ denotes the family of indicator functions associated to convex bodies, this definition recovers the classical notion of valuations on sets. In recent years, valuations on many different classical spaces have been the focus of intense research, for example Sobolev and $L^p$ spaces \cite{LudwigFisherinformationmatrix2011,LudwigValuationsSobolevspaces2012,LudwigCovariancematricesvaluations2013,MaRealvaluedvaluations2016,Ober$L_p$Minkowskivaluations2014,TsangValuations$Lp$spaces2010,TsangMinkowskivaluations$Lp$2012}, functions of bounded variation \cite{WangSemiValuations$BVRn$2014}, Lipschitz functions \cite{ColesantiEtAlclassinvariantvaluations2020,ColesantiEtAlContinuousvaluationsspace2021} and general Banach lattices  \cite{TradaceteVillanuevaValuationsBanachLattices2020}. Due to their intimate relation to convex bodies, valuations on spaces related to convexity \cite{BobkovEtAlQuermassintegralsquasiconcave2014,CavallinaColesantiMonotonevaluationsspace2015,ColesantiLombardiValuationsspacequasi2017,ColesantiEtAlTranslationinvariantvaluations2018,KoneValuationsOrliczspaces2014} and, more specifically, spaces of convex functions \cite{AleskerValuationsconvexfunctions2019,ColesantiEtAlMinkowskivaluationsconvex2017,ColesantiEtAlHessianvaluations2020,ColesantiEtAlhomogeneousdecompositiontheorem2020,ColesantiEtAlHadwigertheoremconvex2022,ColesantiEtAlHadwigertheoremconvex2024,ColesantiEtAlHadwigertheoremconvex2025,HofstaetterKnoerrEquivariantEndomorphismsConvex2023,Knoerrsupportduallyepi2021,KnoerrSmoothvaluationsconvex2024,MussnigVolumepolarvolume2019} have been one of the most active areas of research in modern valuation theory.\\
	
	Let us introduce some notation. For a finite-dimensional real vector space $V$, let $\Conv(V,\R)$ denote the space of finite-valued convex functions on $V$. We consider this space with the topology of uniform convergence on compact subsets and denote by $\VConv(V)$ the space of all continuous valuations on $\Conv(V,\R)$ that are in addition \emph{dually epi-translation invariant}, that is, that satisfy
	\begin{align*}
		\mu(f+\ell)=\mu(f)\quad\text{for all}~f\in\Conv(V,\R),~\ell:V\rightarrow\R~\text{affine}.
	\end{align*}
	We equip $\VConv(\R^n)$ and all of its subspaces with the topology of uniform convergence on compact subsets in $\Conv(\C^n,\R)$ (see \cite[Proposition 2.4]{Knoerrsupportduallyepi2021} for a description of these subsets). This space is intimately related to continuous translation invariant valuations on convex bodies (see \cite{AleskerValuationsconvexfunctions2019},\cite{Knoerrsupportduallyepi2021}), and this connection has led to the discovery of many structural results for valuations of this type. For example, similar to a classical result by McMullen \cite{McMullenValuationsEulertype1977}, there exists a homogeneous decomposition as shown by Colesanti, Ludwig and Mussnig \cite{ColesantiEtAlhomogeneousdecompositiontheorem2020}: If we denote by $\VConv_k(V)$ the subspace of $\VConv(V)$ consisting of \emph{$k$-homogeneous} valuations, that is, all valuations $\mu\in\VConv(V)$ such that $\mu(tf)=t^k\mu(f)$ for all $t\ge 0$, $f\in\Conv(V,\R)$, then
	\begin{align*}
		\VConv(V)=\bigoplus_{k=0}^{\dim V}\VConv_k(V).
	\end{align*}

	Note that $\VConv_0(V)$ is $1$-dimensional and given by constant valuations. In a series of articles \cite{ColesantiEtAlHadwigertheoremconvex2022,ColesantiEtAlHadwigertheoremconvex2023,ColesantiEtAlHadwigertheoremconvex2024,ColesantiEtAlHadwigertheoremconvex2025} Colesanti, Ludwig and Mussnig also obtained a version of Hadwiger's characterization result for the space $\VConv(\R^n)^{\SO(n)}$ of all rotation invariant valuations in $\VConv(\R^n)$. Their classification result may be summarized as follows: There is precisely one family of invariant valuations for each degree of homogeneity, and any such valuation admits a singular integral representation with respect to the so-called \emph{Hessian measure} of the appropriate degree. Following the analogy with Hadwiger's classification of all continuous and rigid motion invariant valuations on convex bodies, these valuations are called \emph{functional intrinsic volumes}.\\
	Let us remark that the Hessian measures are related to the real Monge--Amp\`ere operator and assign to any convex function a non-negative measure on $\R^n$. As such, they play an important role in the study of so-called Hessian equations, a class of fully non-linear partial differential equations introduced by Trudinger and Wang \cite{TrudingerWangHessianmeasures.I1997,TrudingerWangHessianmeasures.II1999}. For the role of Monge--Amp\`ere-type operators in the construction of valuations on convex functions and convex bodies we refer to \cite{ColesantiEtAlHessianvaluations2020} and \cite{AleskerValuationsconvexsets2005,AleskerValuationsconvexfunctions2019}.\\
	
	This is the first of two articles on the classification of the space $\VConv_k(\C^n)^{\U(n)}$ of all $\U(n)$-invariant valuations in $\VConv_k(\C^n)$, that is, all valuations $\mu\in\VConv_k(\C^n)$ such that
	\begin{align*}
		\mu(f\circ g)=\mu(f)\quad \text{for all }f\in\Conv(\C^n,\R), g\in\U(n).
	\end{align*}
	As $\U(1)=\SO(2)$, the case $n=1$ is already covered by the results by Colesanti, Ludwig and Mussnig, so we will assume that $n\ge2$ without explicitly stating this restriction in the results.\\
	The main idea of our approach is that any valuation in $\VConv_k(\C^n)^{\U(n)}$ should decompose into a sum of valuations that mirror the behavior of the Hermitian intrinsic volumes under restrictions to lower dimensional subspaces. In order to make this precise, let $V$ and $W$ be finite dimensional real vector spaces and consider a linear map $T:V\rightarrow W$. We may then define the pushforward $T_*:\VConv(V)\rightarrow\VConv(W)$ by assigning to $\mu\in \VConv(V)$ the valuation $T_*\mu\in\VConv(W)$ given by
	\begin{align*}
		[T_*\mu](f):=\mu(T^*f)\quad\text{for }f\in\Conv(W,\R).
	\end{align*}
	If $V=\C^n$ and $E\subset \C^n$ is a subspace, we may in particular consider the pushforward along the orthogonal projection $\pi_{E}:\C^n\rightarrow E$, which we call the restriction of a valuation to the subspace $E$. Consider for $\max(0,k-n)\le q\le\lfloor\frac{k}{2}\rfloor$ the closed subspace
	\begin{align*}
		\VConv_{k,q}(\C^n)^{\U(n)}:=\{\mu\in\VConv_k(\C^n)^{\U(n)}:\pi_{E_{k,p}*}\mu=0\text{ for all } p\ne q \}.
	\end{align*}
		Following the analogy with valuations on convex bodies, one may consider valuations in these spaces as functional analogs to the Hermitian intrinsic volumes. Note however that it is not directly clear from the definition whether $\VConv_{k,q}(\C^n)^{\U(n)}$ contains any nontrivial valuations - for example, for $2\le k\le 2n-2$, the functional intrinsic volumes are not contained in any of these spaces.\\
		The overarching goal of this two-part series is to establish that $\VConv_k(\C^n)^{\U(n)}$ decomposes into a direct sum of the spaces $\VConv_{k,q}(\C^n)^{\U(n)}$ and to describe the valuations belonging to these smaller subspaces explicitly in terms of valuations defined in terms of two families of Monge--Amp\`ere-type operators
	\begin{align*}
		&\Theta^n_{k,q}:\Conv(\C^n,\R)\rightarrow\mathcal{M}(\C^n)&&\text{for }\max(0,k-n)\le q\le \left\lfloor\frac{k}{2}\right\rfloor,\text{ and }\\
		&\Upsilon^n_{k,q}:\Conv(\C^n,\R)\rightarrow\mathcal{M}(\C^n)&&\text{for }\max(1,k-n)\le q\le \left\lfloor\frac{k-1}{2}\right\rfloor,
	\end{align*}
	see Definition \ref{definition:ComplexMA}, where $\mathcal{M}(\C^n)$ denotes the space of complex Radon measures on $\C^n$. The family of operators $\Theta^n_{k,q}$ includes the complex Monge--Amp\`ere operator and may be considered as a Hermitian analog of the family of Hessian measures. The operators $\Upsilon^n_{k,q}$ seem to be new, and at the present time, we lack a satisfying geometric interpretation of these functionals. They are obtained from certain invariant differential forms on the cotangent bundle $T^*\C^n$ using a construction from \cite{KnoerrMongeAmpereoperators2024}. We have chosen these operators such that all of the valuations $f\mapsto \int_{\C^n}\phi(|z|)d\Upsilon^n_{k,q}(f;z)$ vanish identically on rotation invariant functions independently of the choice of $\phi\in C_c([0,\infty))$, compare \cite{Knoerrgeometricdecompositionunitarily2024}.\\

	Let us  briefly summarize the key goals of this article and the second part \cite{Knoerrgeometricdecompositionunitarily2024}:\\
	The main contribution of this article is a classification of the dense subspace of smooth valuations in $\VConv_k(\C^n)^{\U(n)}$ in terms of the operators $\Theta^n_{k,q}$ and $\Upsilon^n_{k,q}$ (see Section \ref{section:smoothVal} for the definition of smooth valuations). We will then use a recent characterization result for smooth valuations from \cite{KnoerrPaleyWienerSchwartz2025} to show that an arbitrary continuous valuation in $\VConv_{k}(\C^n)^{\U(n)}$ is uniquely determined by its restriction to the spaces $E_{k,q}$ by approximation.\\
	This implies that the sum of the spaces $\VConv_{k,q}(\C^n)^{\U(n)}$ is direct and dense in $\VConv_{k}(\C^n)^{\U(n)}$, however, it is not enough to establish a direct sum decomposition for the whole space.\\
	In the second part \cite{Knoerrgeometricdecompositionunitarily2024}, we will leverage the explicit description of smooth valuations in order to construct continuous projections onto the different components $\VConv_{k,q}(\C^n)^{\U(n)}$, which provides the desired decomposition of a given continuous valuation in $\VConv_k(\C ^n)^{\U(n)}$. These results are then used to obtain integral representations for all $\U(n)$-invariant valuations in $\VConv_k(\C^n)$ from the description of smooth valuations in terms of the operators $\Theta^n_{k,q}$ and $\Upsilon^n_{k,q}$.
	\subsection{Main results}
	Our first result shows that valuations in $\VConv_k(\C^n)^{\U(n)}$ reflect the behavior of the Hermitian intrinsic volumes under restrictions to the subspaces $E_{k,q}$, $\max(0,k-n)\le q\le\lfloor\frac{k}{2}\rfloor$. 
	\begin{maintheorem}
		\label{maintheorem:vanishing}
			Let $0\le k\le 2n$.	A valuation $\mu\in\VConv_k(\C^n)^{\U(n)}$ satisfies $\mu\equiv 0$ if and only if $\pi_{E_{k,q}*}\mu=0$ for all $\max(0,k-n)\le q\le\lfloor\frac{k}{2}\rfloor$. 
	\end{maintheorem}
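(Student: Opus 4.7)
The plan is to pass from $\mu$ to its Goodey--Weil distribution, study its Fourier--Laplace transform, translate the hypothesis into the vanishing of a $\U(n)$-invariant entire function on prescribed subspaces, and then exploit Tasaki's parametrization of $\U(n)$-orbits in $\Gr_k(\C^n)$ to conclude global vanishing. By continuity of the pushforwards $\pi_{E_{k,q}*}$ and density of smooth valuations in $\VConv_k(\C^n)^{\U(n)}$ --- the latter preserved by averaging approximants over $\U(n)$ --- I would first reduce to the case of smooth $\mu$. For such $\mu$, the integral representation in terms of Monge--Amp\`ere-type operators announced in the abstract gives an explicit handle on the Goodey--Weil distribution $\GW(\mu)$, and hence on its Fourier--Laplace transform $F := \widehat{\GW(\mu)}$, which is entire and of exponential type by Paley--Wiener.

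Next I would establish the functional analog of the Klain function: the pushforward $\pi_{E*}\mu$ is uniquely encoded by the restriction of $F$ to a suitable subspace attached to $E$. This is essentially the intermediate statement, advertised in the abstract, that homogeneous valuations are determined by their restrictions to subspaces of appropriate dimension, with the restriction data visible on the Fourier--Laplace side. Since $\mu$ is $\U(n)$-invariant, $F$ is $\U(n)$-invariant under the diagonal action, so the hypothesis $\pi_{E_{k,q}*}\mu = 0$ translates into the vanishing of $F$ on the entire $\U(n)$-orbit of the subspace associated to $E_{k,q}$, for each admissible $q$.

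The heart of the argument --- and the main obstacle --- is propagating this vanishing from the orbits corresponding to the $E_{k,q}$ to all $\U(n)$-orbits. By Tasaki's theorem, the orbit space of $\Gr_k(\C^n)$ is the simplex of multiple K\"ahler angles, and the $E_{k,q}$ correspond exactly to its vertices. The plan is to descend $F$ to a real-analytic function on the corresponding invariant quotient and combine the vanishing along the fibers above the vertices with the rigidity coming from the entire structure of $F$; an analytic continuation or Taylor expansion argument along curves in Tasaki's simplex joining the vertices should then force $F$ to vanish on the full simplex, hence everywhere. Once $F \equiv 0$, injectivity of the Fourier--Laplace transform on compactly supported distributions yields $\GW(\mu) = 0$, and injectivity of the Goodey--Weil embedding yields $\mu \equiv 0$.
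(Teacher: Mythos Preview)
Your outline has the right overall shape --- pass to the Fourier--Laplace transform $F=\mathcal{F}(\GW(\mu))$, use that $\pi_{E*}\mu$ is encoded by $F|_{(E_\C)^k}$, exploit $\U(n)$-invariance --- but two of the steps you single out as the core of the argument do not go through as stated.

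\medskip
\textbf{The reduction to smooth $\mu$ fails.} If $\pi_{E_{k,q}*}\mu=0$ for all $q$ and you approximate $\mu$ by smooth $\mu_j$, there is no reason the $\mu_j$ satisfy the same vanishing; density of smooth valuations in $\VConv_k(\C^n)^{\U(n)}$ does not give density in the subspace cut out by these finitely many restriction conditions. The paper flags exactly this issue after stating the smooth decomposition (Theorem~\ref{maintheorem:smooth_unitarily_invariant_valuations}). What the paper does instead is prove an \emph{identity} --- the explicit formula of Theorem~\ref{maintheorem:Fourier_decomposition} --- first for smooth valuations and then extend the identity itself to all of $\VConv_k(\C^n)^{\U(n)}$ by sequential continuity of both sides. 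That is a very different use of density than reducing the \emph{hypothesis} to the smooth case.

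\medskip
\textbf{The ``descend to the Tasaki simplex and analytically continue'' step is where the real gap lies.} The restriction of $F$ to $(E_\C)^k$ is not a number attached to $E$; by Lemma~\ref{lemma:interpretation_fourier_maxdegree} it is governed by an entire \emph{function} $\mathcal{F}_E(\phi_E)$ on $E_\C$, where $\phi_E\in C_c(E)$ is the density of $\pi_{E*}\mu$. So $F$ does not descend to a real-analytic function on the orbit space, and there is no obvious invariant of $F|_{(E_\C)^k}$ that (a) depends only on the K\"ahler angle of $E$ and (b) detects whether $\phi_E=0$. Even if one manufactured such an invariant, vanishing of an analytic function at the vertices of a simplex does not force vanishing on the interior without an a priori bound on its complexity --- and producing that bound is precisely the hard content. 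The paper supplies this bound structurally: it shows (Theorem~\ref{theorem:densityRestrictions}) that $\phi_{E_{k,q}}$ is determined by two radial functions $a_q(\mu),b_q(\mu)$, and then proves (Theorem~\ref{theorem:FourierPreciseVersion}) that for every $\U(n)$-invariant $\mu$ the full transform $F$ is an explicit finite combination of universal polynomials $P_{k,q}$ and differential operators $D^{k,q}_{\beta,\mathbf{w}},D^{k,q}_{\gamma,\mathbf{w}}$ applied to entire functions $\tilde\Phi_q(\mu),\tilde\Psi_q(\mu)$ built from $a_q(\mu),b_q(\mu)$. Once this formula is in hand, $\pi_{E_{k,q}*}\mu=0$ for all $q$ gives $a_q(\mu)=b_q(\mu)=0$, hence $F\equiv 0$, hence $\mu=0$. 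The analytic-continuation heuristic is replaced by an explicit algebraic decomposition, and establishing that decomposition (via the classification of invariant differential forms in Section~\ref{section:invariantForms} and the computations in Section~\ref{section:FourierSmooth}) is the actual work.
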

	We will obtain Theorem \ref{maintheorem:vanishing} from a characterization of smooth valuations. Denote the subspace of smooth valuations in $\VConv_k(\C^n)$ by $\VConv_k(\C^n)^{sm}$ and consider the spaces
	\begin{align*}
		\VConv_k(\C^n)^{\U(n),sm}&:=\VConv_k(\C^n)^{\U(n)}\cap \VConv_k(\C^n)^{sm}.
	\end{align*}
	Note that $\VConv_k(\C^n)^{\U(n),sm}\subset \VConv_k(\C^n)^{\U(n)}$ is sequentially dense by the main results of \cite{KnoerrSmoothvaluationsconvex2024}. The next result provides  integral representations of these functionals in terms of the Monge--Amp\`ere-type operators $\Theta^n_{k,q}$ and $\Upsilon^n_{k,q}$.
	\begin{maintheorem}
		\label{maintheorem:smooth_unitarily_invariant_valuations}
			Let $0\le k\le 2n$. A valuation $\mu\in\VConv_k(\C^n)^{\U(n)}$ is smooth if and only if there exist $\phi_{q}\in C^\infty_c([0,\infty))$ for $\max(0,k-n)\le q\le\lfloor\frac{k}{2}\rfloor$ and $\psi_q\in C^\infty_c([0,\infty))$ for $\max(1,k-n)\le q\le \lfloor\frac{k-1}{2}\rfloor$ such that
		\begin{align*}
			\mu(f)=\sum_{q=\max(0,k-n)}^{\lfloor\frac{k}{2}\rfloor}\int_{\C^n}\phi_q(|z|^2)d\Theta^n_{k,q}(f;z)+\sum_{q=\max(1,k-n)}^{\lfloor\frac{k-1}{2}\rfloor}\int_{\C^n}\psi_q(|z|^2)d\Upsilon^n_{k,q}(f:z)
		\end{align*}
		for all $f\in\Conv(\C^n,\R)$. For $1\le k\le 2n$, these functions are unique.
	\end{maintheorem}
	In order to obtain Theorem \ref{maintheorem:vanishing} from this characterization, we will explicitly calculate the restriction of these valuations to the spaces $E_{k,q}$. For a smooth function $f\in\Conv(\R^n,\R)$, the measures $\Theta^n_{k,q}(f)$ and $\Upsilon^n_{k,q}(f)$ are given by integrating certain highly nontrivial polynomials in the entries of the Hessian of $f$ over a given Borel set, which makes a direct calculation very cumbersome.\\ 

	To circumvent this problem, we exploit a relation between the restrictions of a homogeneous valuation and the Fourier--Laplace transform of certain distributions associated to these valuations. More precisely, it was shown in \cite{Knoerrsupportduallyepi2021} that one can associate to any $k$-homogeneous valuation $\mu\in\VConv_k(\R^n)$ a compactly supported distribution $\GW(\mu)$ on $(\R^n)^k$, called its Goodey--Weil distribution, and since this distribution has compact support, its Fourier--Laplace transform defines an entire function on $(\C^n)^k$. We will see in Section \ref{section:Fourier} that restricting a $k$-homogeneous valuation to a $k$-dimensional subspace $E\subset\R^n$ corresponds to evaluating this function on the complexified subspace $(E\otimes\C)^k\subset\C^n$. This provides a direct way to connect properties of the restrictions to properties of the original valuation. In particular, we obtain the following.
	\begin{maintheorem}
		\label{maintheorem:simple_valuations}
		If $\mu\in\VConv_k(\R^n)$ satisfies $\pi_{E*}\mu=0$ for all $k$-dimensional subspaces $E\subset \R^n$, then $\mu\equiv 0$.
	\end{maintheorem}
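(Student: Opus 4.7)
The plan is to deduce $\mu\equiv 0$ from the stronger statement $\GW(\mu)=0$, using the Fourier-Laplace transform as a bridge between the vanishing of restrictions and the vanishing of the Goodey-Weil distribution. The key observation is that the Fourier-Laplace transform $\mathcal{F}(\GW(\mu))$ encodes the restrictions of $\mu$ to all $k$-dimensional subspaces simultaneously, and the values on real tuples suffice to determine it by entirety.

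The first step is to establish the natural intertwining between restriction and the Goodey-Weil map. Writing $\Pi_E:=\pi_E\times\cdots\times\pi_E:(\R^n)^k\to E^k$ for the $k$-fold product of the orthogonal projection, the definition of $\pi_{E*}\mu$ combined with the multilinearity of the polarization yields, for smooth convex $f_1,\dots,f_k\in\Conv(E,\R)$,
\[
\overline{\pi_{E*}\mu}(f_1,\dots,f_k)=\bar\mu(f_1\circ\pi_E,\dots,f_k\circ\pi_E)=\GW(\mu)\bigl[(f_1\otimes\cdots\otimes f_k)\circ\Pi_E\bigr].
\]
Since the Goodey-Weil distribution is uniquely determined by its values on such tensor products, this upgrades to the identity
\[
\GW(\pi_{E*}\mu)=(\Pi_E)_*\GW(\mu).
\]

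Passing to Fourier-Laplace transforms and using that for $z_1,\dots,z_k\in E$ one has $\langle z_j,x\rangle=\langle z_j,\pi_E x\rangle$, the standard intertwining of pushforward with Fourier transform gives
\[
\mathcal{F}(\GW(\mu))[z_1,\dots,z_k]=\mathcal{F}(\GW(\pi_{E*}\mu))[z_1,\dots,z_k]\quad\text{for } z_1,\dots,z_k\in E.
\]
The hypothesis $\pi_{E*}\mu=0$ forces the right-hand side to vanish. Since any $k$ real vectors in $\R^n$ span a subspace of real dimension at most $k$ and hence lie in some $k$-dimensional $E$, we conclude that the entire function $\mathcal{F}(\GW(\mu))$ vanishes on the totally real subspace $(\R^n)^k\subset(\C^n)^k$. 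Applying the one-variable identity theorem inductively in each complex coordinate then forces $\mathcal{F}(\GW(\mu))\equiv 0$, so $\GW(\mu)=0$, and by its uniqueness property $\mu\equiv 0$.

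The main obstacle is the first step: the polarization identity is transparent on tensor products of smooth convex functions, but to access the Fourier-Laplace transform one needs to pair $\GW(\mu)$ with the non-convex test functions $\exp(i\langle z_j,\cdot\rangle)$. Making rigorous that the extension of $\GW(\mu)$ to arbitrary smooth test functions commutes with pullback along $\Pi_E$ requires the careful use of the compact support of $\GW(\mu)$ and the extension/uniqueness results from \cite{Knoerrsupportduallyepi2021}; once this is in place the remaining steps (covering $(\R^n)^k$ by subspaces $E^k$ and the multivariable identity theorem) are routine.
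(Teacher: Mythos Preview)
Your proposal is correct and follows essentially the same strategy as the paper: show that $\mathcal{F}(\GW(\mu))$ vanishes on a totally real subspace of $(\C^n)^k$ by relating its values there to restrictions $\pi_{E*}\mu$, then invoke the identity theorem and injectivity of $\GW$.

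The one noteworthy difference is how the paper handles the ``main obstacle'' you flag. Rather than establishing the intertwining $\GW(\pi_{E*}\mu)=(\Pi_E)_*\GW(\mu)$ and then worrying about pairing with the non-convex functions $\exp(i\langle z_j,\cdot\rangle)$, the paper evaluates $\mathcal{F}(\GW(\mu))$ at \emph{purely imaginary} points $-y_j\otimes i$, so that the relevant test functions become the genuinely convex exponentials $\exp(\langle y_j,\cdot\rangle)$. This lets one read off the vanishing directly from the polarization formula
\[
\mathcal{F}(\GW(\mu))[-y_1\otimes i,\dots,-y_k\otimes i]=\frac{1}{k!}\frac{\partial^k}{\partial\lambda_1\cdots\partial\lambda_k}\Big|_0\mu\Bigl(\sum_j\lambda_j\exp(\langle y_j,\cdot\rangle)\Bigr),
\]
since the argument is $\pi_W^*f$ for $W=\mathrm{span}(y_1,\dots,y_k)$. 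Your route via the pushforward identity is perfectly valid (compact support of $\GW(\mu)$ plus the uniqueness in Theorem~\ref{theorem:Def_GW} make it rigorous), but the paper's choice of evaluation point makes the obstacle disappear entirely.
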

	Let us remark that a version of this result was also obtained by Colesanti, Ludwig and Mussnig in \cite{ColesantiEtAlHadwigertheoremconvex2023} using a very different approach. However, due to the fact that entire functions satisfy strong rigidity conditions, the Fourier--Laplace transform can be used to encode much stronger analytic properties. As a simple application, we will show that a valuation vanishes if it vanishes on convex polynomial functions, see Corollary \ref{corollary:vanishing-on-polynomials}. We also obtain certain restrictions on the support of a valuation, see Section \ref{section:restrictions_support}.\\
	
	As an additional benefit, the Fourier--Laplace transform of Goodey--Weil distributions is rather simple to calculate for valuations with a given integral representation. For smooth $\U(n)$-invariant valuations, this calculation together with the classification in Theorem \ref{maintheorem:smooth_unitarily_invariant_valuations} implies the following decomposition. For $\max(0,k-n)\le q\le \lfloor\frac{k}{2}\rfloor$, set
	\begin{align*}
		\VConv_{k,q}(\C^n)^{\U(n),sm}&:=\VConv_{k,q}(\C^n)^{\U(n)}\cap \VConv_k(\C^n)^{sm}.
	\end{align*}
	\begin{maintheorem}
		\label{maintheorem:DecompSmoothCase}
		We have the following direct sum decomposition for all $0\le k\le 2n$:
		\begin{align*}
			\VConv_k(\C^n)^{\U(n),sm}=\bigoplus_{q=\max(0,k-n)}^{\lfloor\frac{k}{2}\rfloor}\VConv_{k,q}(\C^n)^{\U(n),sm}.
		\end{align*}
	\end{maintheorem}
	Let us briefly comment on the relation of these results to the problem of decomposing $\VConv_k(\C^n)^{\U(n)}$ into a direct sum of the spaces $\VConv_{k,q}(\C^n)^{\U(n)}$. Note that Theorem \ref{maintheorem:vanishing} implies that the sum
	\begin{align*}
		\bigoplus_{q=\max(0,k-n)}^{\lfloor\frac{k}{2}\rfloor} \VConv_{k,q}(\C^n)^{\U(n)}\subset \VConv_k(\C^n)^{\U(n)}
	\end{align*}
	is direct, while Theorem \ref{maintheorem:DecompSmoothCase} shows that it is dense, since the left hand side contains the dense subspace of smooth $\U(n)$-invariant valuations. Moreover, for $k=0,1,2n-1,2n$, we trivially have equality, since there is only one index $\max(0,k-n)\le q\le \lfloor\frac{k}{2}\rfloor$. For all other degrees of homogeneity, we just obtain a direct sum of closed subspaces, and such a sum may of course be dense without providing a decomposition of the whole space. Thus we are left with the problem of decomposing a given valuation in $\VConv_k(\C^n)^{\U(n)}$ into a sum of valuations belonging to the spaces $\VConv_{k,q}(\C^n)^{\U(n)}$, which is the main problem considered in the second part \cite{Knoerrgeometricdecompositionunitarily2024}.
	
	\subsection{Plan of the article}
	In Section \ref{section:Fourier} we introduce and discuss the basic properties of the Fourier--Laplace transform of Goodey--Weil distributions and prove Theorem \ref{maintheorem:simple_valuations}. Section \ref{section:invariantForms} provides a characterization of the differential forms used in the classification of smooth unitarily invariant valuations. In Section \ref{section:characSmoothVal} we calculate the restrictions of valuations defined in terms of the operators $\Theta^n_{k,q}$ and $\Upsilon^n_{k,q}$ and we show that any smooth valuation admits a representation of the type stated in Theorem \ref{maintheorem:smooth_unitarily_invariant_valuations}. The uniqueness of this integral representation is then used to establish Theorem \ref{maintheorem:DecompSmoothCase} and Theorem \ref{maintheorem:vanishing}, which requires an approximation result we discuss in Section \ref{section:smoothVal}.
		
\section{Dually epi-translation invariant valuations}

\subsection{The Fourier--Laplace transform of Goodey--Weil distributions}
\label{section:Fourier}
In this section we recall some facts about Goodey--Weil distributions. Let us remark that this concept is based on ideas by Goodey and Weil \cite{GoodeyWeilDistributionsvaluations1984} in the context of translation invariant valuations on convex bodies. For $\mu\in\VConv_k(V)$, we define its polarization $\bar{\mu}:\Conv(V,\R)^k\rightarrow\R$ by
\begin{align*}
	\bar{\mu}(f_1,\dots,f_k):=\frac{1}{k!}\frac{\partial}{\partial \lambda_1}\Big|_0\dots \frac{\partial}{\partial \lambda_k}\Big|_0\mu\left(\sum_{i=1}^{k}\lambda_if_i\right)\quad \text{for } f_1\dots,f_k\in\Conv(V,\R).
\end{align*}
Essentially, $\bar{\mu}$ is a multilinear functional on $\Conv(V,\R)$, which may be lifted to a distribution.

\begin{theorem}[\cite{Knoerrsupportduallyepi2021} Theorem 2]
	\label{theorem:Def_GW}
	For every $\mu\in\VConv_k(V)$ there exists a unique distribution $\GW(\mu)\in\mathcal{D}'(V^k)$ with compact support which satisfies the following property: If $f_1,...,f_k\in \Conv(V,\R)\cap C^\infty(V)$, then
	\begin{align*}
		\GW(\mu)[f_1\otimes...\otimes f_k]=\bar{\mu}(f_1,...,f_k).
	\end{align*}
\end{theorem}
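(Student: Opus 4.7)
The plan is to define $\GW(\mu)$ by polarizing $\mu$, establishing a continuity estimate in a $C^N$-norm, and invoking the Schwartz kernel theorem.

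First, using the homogeneous decomposition of $\VConv(V)$ of Colesanti--Ludwig--Mussnig iteratively in each argument (together with the fact that $\mu$ is $k$-homogeneous in any fixed convex function), one obtains that for $f_1,\dots,f_k\in\Conv(V,\R)$ the function
\[
(\lambda_1,\dots,\lambda_k)\mapsto \mu\!\left(\sum_{i=1}^k\lambda_if_i\right)
\]
is a polynomial on $[0,\infty)^k$ of total degree $k$. Picking out the multilinear coefficient gives
\[
\bar\mu(f_1,\dots,f_k)=\frac{1}{k!}\sum_{I\subset\{1,\dots,k\}}(-1)^{k-|I|}\mu\!\left(\sum_{i\in I}f_i\right),
\]
which is manifestly symmetric and multilinear on the cone $\Conv(V,\R)^k$.

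The technical heart of the proof is a continuity estimate of the form
\[
|\bar\mu(f_1,\dots,f_k)|\le C\prod_{i=1}^k\|f_i\|_{C^N(K')}
\]
for smooth convex $f_i$, where $K'\subset V$ is a fixed compact set and $C$, $N$ depend only on $\mu$. One should derive this by combining the inclusion--exclusion formula above with the continuity of $\mu$ in the topology of uniform convergence on compacts, using that subsets of $\Conv(V,\R)$ that are $C^N$-bounded on a sufficiently large compact set (and suitably extended outside) are precompact, so $\mu$ is uniformly bounded on them. To pass from convex arguments to arbitrary test functions, write any $\varphi\in C_c^\infty(V)$ as $(\varphi+c|x|^2)-c|x|^2$ with $c$ large enough that both summands are convex on a ball containing $\supp\varphi$; multilinearity then extends $\bar\mu$ to $C_c^\infty(V)^k$ with the same estimate.

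The main obstacle will be establishing \emph{locality}: that the polarization depends only on the restrictions of the $f_i$ to some fixed compact set, so that the extension by quadratic-plus-cutoff above does not depend on the chosen decomposition. The natural strategy is to show that replacing $f_i$ by $\tilde f_i$ which agrees with $f_i$ on a large ball but is modified convexly outside does not change $\bar\mu(f_1,\dots,f_k)$; this in turn would follow from a valuation-theoretic argument using that the dependence of $\mu$ on $f$ outside a sufficiently large ball is trivial (continuity together with the observation that one can homotope $f$ to $\tilde f$ through convex functions while keeping the central behavior fixed). Once locality and the continuity estimate are in place, the Schwartz kernel theorem produces a unique distribution $\GW(\mu)\in\mathcal{D}'(V^k)$ satisfying $\GW(\mu)[f_1\otimes\cdots\otimes f_k]=\bar\mu(f_1,\dots,f_k)$, and the estimate being localized to a fixed compact implies compactness of $\supp\GW(\mu)$. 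Uniqueness is automatic since finite linear combinations of tensor products are dense in $C_c^\infty(V^k)$.
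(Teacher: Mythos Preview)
The paper does not prove this statement; it is quoted verbatim from \cite{Knoerrsupportduallyepi2021} as Theorem~2 and used as a black box, so there is no ``paper's own proof'' to compare against. What the paper does record, immediately after the statement, is the sharp continuity bound
\[
|\GW(\mu)[\phi_1\otimes\dots\otimes\phi_k]|\le c_k\,\|\mu\|_K\prod_{i=1}^k\|\phi_i\|_{C^2(V)},
\]
so the order of the estimate is $N=2$, not an unspecified $N$ depending on $\mu$.

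Your outline is the right shape, and the inclusion--exclusion formula for $\bar\mu$ is correct. However, two steps are genuinely incomplete rather than just sketched. First, the continuity estimate: the assertion that ``subsets of $\Conv(V,\R)$ that are $C^N$-bounded on a sufficiently large compact set are precompact'' is not quite the mechanism; the actual argument (in the cited reference) exploits that $\bar\mu$ is additive in each slot and invariant under adding affine functions, so one can reduce to differences of convex functions with controlled second derivatives on a fixed ball, yielding the $C^2$ bound. Your $C^N$ estimate with unspecified $N$ would still give \emph{a} distribution, but you have not supplied the compactness argument that makes it work. Second, and more seriously, the locality step you flag as the ``main obstacle'' is exactly where the content lies: the claim that $\bar\mu(f_1,\dots,f_k)$ depends only on the $f_i$ restricted to a fixed compact set is equivalent to $\GW(\mu)$ having compact support, and your proposed homotopy argument (``homotope $f$ to $\tilde f$ through convex functions while keeping the central behavior fixed'') does not by itself show that $\mu$ is constant along such a homotopy---that requires an independent input about the support of $\mu$, which in the reference is established separately (and is in fact the substance of the result). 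As written, this part of your argument is circular.
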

The distribution $\GW(\mu)$ is called the Goodey--Weil distribution of $\mu\in\VConv_k(V)$ and uniquely determines $\mu$.\\

According to \cite[Theorem 5.5]{Knoerrsupportduallyepi2021}, the support of the Goodey--Weil distribution of a $k$-homogeneous valuation is contained in the diagonal of $V^k$, which leads to the following notion of support: If $\mu=\sum_{k=0}^{\dim V}\mu_k$ is the decomposition of $\mu$ into its homogeneous components, then we set
\begin{align*}
	\supp\mu:=\bigcup_{k=1}^{\dim(V)}\Delta_k^{-1}(\supp\GW(\mu_k))\subset V,
\end{align*}
where $\Delta_k:V\rightarrow V^k$ denotes the diagonal embedding. In particular, $\supp\mu=\emptyset $ for $\mu\in\VConv_0(V)$. Note that the support of $\mu\in\VConv(V)$ is always a compact set. In \cite{Knoerrsupportduallyepi2021} the following alternative characterization of the support was established.
\begin{proposition}[\cite{Knoerrsupportduallyepi2021} Proposition 6.3]
	\label{proposition:characterization_support}
		The support of $\mu\in\VConv(V)$ is minimal (with respect to inclusion) among the closed sets $A\subset V$ with the following property: If $f,g\in\Conv(V,\R)$ satisfy $f=g$ on an open neighborhood of $A$, then $\mu(f)=\mu (g)$.
\end{proposition}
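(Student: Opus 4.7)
The plan is to reduce both directions to statements about the Goodey--Weil distributions $\GW(\mu_k)$ of the homogeneous components, using the identity $\supp\mu = \bigcup_{k} \Delta_k^{-1}(\supp\GW(\mu_k))$ together with the equality $\supp\GW(\mu_k) = \Delta_k(\supp\mu_k)$, which follows from the fact that $\supp\GW(\mu_k) \subseteq \Delta_k(V)$. I fix a single homogeneous degree $k$ throughout.

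For the first direction, suppose $f, g \in \Conv(V, \R)$ agree on an open neighborhood $U$ of $\supp\mu$. Standard mollification produces smooth convex $f_\varepsilon, g_\varepsilon$ agreeing on the $\varepsilon$-inner parallel set of $U$, which contains the compact set $\supp\mu_k$ for $\varepsilon$ small; continuity of $\mu_k$ reduces the claim $\mu_k(f) = \mu_k(g)$ to the case of smooth $f, g$. Telescoping via the $k$-linearity of $\bar{\mu}_k$ yields
\[
\mu_k(f) - \mu_k(g) = \sum_{i = 0}^{k - 1} \bar{\mu}_k\bigl(\underbrace{g, \ldots, g}_{i}, f - g, \underbrace{f, \ldots, f}_{k - i - 1}\bigr),
\]
and each summand equals $\GW(\mu_k)$ applied to a tensor one of whose factors is $f - g$. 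Since $f - g$ vanishes on $U$, the tensor vanishes on the open neighborhood $V^i \times U \times V^{k - i - 1}$ of $\supp\GW(\mu_k)$, so each summand is zero by the standard support property of compactly supported distributions.

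For the minimality direction, let $A$ be closed with the stated property and fix $x \notin A$. Choose an open neighborhood $U$ of $x$ with $\overline{U} \cap A = \emptyset$; it suffices to show $\GW(\mu_k)$ vanishes on $U^k$ for every $k$, forcing $(x, \ldots, x) \notin \supp\GW(\mu_k)$. By sequential density of tensor products in $C^\infty_c(U^k)$ and continuity of $\GW(\mu_k)$ as a distribution, this reduces to showing $\GW(\mu_k)[\phi_1 \otimes \cdots \otimes \phi_k] = 0$ for $\phi_i \in C^\infty_c(U)$. Pick a convex quadratic $L$ large enough that each $L + \phi_i$ is convex and write $\phi_i = (L + \phi_i) - L$. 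Expanding multilinearly and then applying the standard polarization identity to each resulting polarization of convex functions yields
\[
k! \, \bar{\mu}_k(\phi_1, \ldots, \phi_k) = \sum_{S \subseteq \{1, \ldots, k\}} (-1)^{k - |S|} \sum_{T \subseteq \{1, \ldots, k\}} (-1)^{k - |T|} \mu_k\Bigl(|T| L + \sum_{j \in T \cap S} \phi_j\Bigr).
\]
Each argument of $\mu_k$ agrees with $|T| L$ on $V \setminus \bigcup_{j} \supp\phi_j \supseteq V \setminus \overline{U}$, which is an open neighborhood of $A$; by the hypothesis on $A$ it may be replaced by $|T| L$, giving $|T|^k \mu_k(L)$, independent of $S$. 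The inner sum is therefore a constant in $S$, and the outer sum vanishes because $\sum_{S \subseteq \{1, \ldots, k\}} (-1)^{k - |S|} = 0$ for $k \ge 1$.

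The main technical obstacle is the reduction from vanishing on $C^\infty_c(U^k)$ to vanishing on tensor products; this relies on sequential density of tensor products combined with continuity of $\GW(\mu_k)$ as a distribution, with the estimate \eqref{inequality:GW_continuity} available to quantify the latter. A secondary point is that $\bar{\mu}_k$ is evaluated on smooth but non-convex arguments above; this is unambiguous because $\bar{\mu}_k(f_1, \ldots, f_k) = \GW(\mu_k)[f_1 \otimes \cdots \otimes f_k]$ extends by multilinearity to arbitrary smooth $f_i$.
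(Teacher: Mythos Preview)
The paper does not contain a proof of this proposition: it is quoted verbatim from \cite{Knoerrsupportduallyepi2021} (Proposition~6.3) and used as a black box, so there is no in-paper argument to compare against.

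Your argument is correct. A couple of points worth making explicit. In the minimality direction you silently pass from the hypothesis ``$\mu(f)=\mu(g)$ whenever $f=g$ near $A$'' to the same statement for the homogeneous component $\mu_k$; this is justified because $f=g$ near $A$ implies $tf=tg$ near $A$ for every $t>0$, hence $\sum_k t^k\mu_k(f)=\sum_k t^k\mu_k(g)$ as polynomials in $t$, forcing equality termwise. It is also worth noting that each argument $|T|L+\sum_{j\in T\cap S}\phi_j$ appearing in your double sum is genuinely convex, since it equals $\sum_{j\in T\cap S}(L+\phi_j)+(|T|-|T\cap S|)L$ with $|T\cap S|\le |T|$. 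With these remarks the two directions go through exactly as you describe.
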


Let us turn to the case $V=\R^n$ with its standard scalar product. As the Goodey--Weil distribution of $\mu\in\VConv_k(\R^n)$ is compactly supported by Theorem \ref{theorem:Def_GW}, its Fourier--Laplace transform is an entire function on $(\C^n)^k\cong (\R^n)^k\otimes \C$ and is given by
\begin{align*}
	\mathcal{F}(\GW(\mu))[w_1,\dots,w_k]=\GW(\mu)[\exp(-i( w_1,\cdot))\otimes\dots\otimes \exp(-i( w_k,\cdot))]
\end{align*}
for $w_1,\dots w_k\in \C^n$. Here, $(\cdot,\cdot):\C^n\times\C^n\rightarrow\C$ is the $\C$ linear extension of the scalar product on $\R^n$. In particular, $\GW(\mu)$ (and thus $\mu$) is uniquely determined by this function. Let us make the following observation.
\begin{corollary}
	\label{corollary:fourier_relation_mu}
	For $\mu\in \VConv_k(\R^n)$, the restriction of $\mathcal{F}(\GW(\mu))$ to $(i\R^n)^k\subset (\C^n)^k$ is given by 
	\begin{align*}
		\mathcal{F}(\GW(\mu))[y_1\otimes i,\dots, y_k\otimes i]=\frac{1}{k!}\frac{\partial^k}{\partial\lambda_1\dots\partial\lambda_k}\Big|_0\mu\left(\sum_{i=1}^k\lambda_i\exp( y_i,\cdot)\right)
	\end{align*}
	for $y_1,\dots,y_k\in\R^n$. Moreover, $\mathcal{F}(\GW(\mu))$ is uniquely determined by this equation.
\end{corollary}
\begin{proof}
	As $\exp( y,\cdot)$ is a convex function for $y\in \R^n$, this follows directly from the characterizing properties of the Goodey--Weil distribution from Theorem \ref{theorem:Def_GW} and the polarization of $\mu$:
		\begin{align*}
			\mathcal{F}(\GW(\mu))[y_1\otimes i,\dots, y_k\otimes i]=&\GW(\mu)[\exp( y_1,\cdot)\otimes\dots\otimes \exp( y_k,\cdot)]\\
			=&\frac{1}{k!}\frac{\partial^k}{\partial\lambda_1\dots\partial\lambda_k}\Big|_0\mu\left(\sum_{i=1}^k\lambda_i\exp( y_i,\cdot)\right).
		\end{align*}
	The last claim follows from the fact that we can recover the power series expansion of the holomorphic function $\mathcal{F}(\GW(\mu))$ in $0$ from the restriction to this subspace of $(\C^n)^k$.
\end{proof}
\begin{proof}[Proof of Theorem \ref{maintheorem:simple_valuations}]
	Assume that $\mu\in\VConv_k(\R^n)$ satisfies $\pi_{W*}\mu=0$ for all orthogonal projections $\pi_W:\R^n\rightarrow W$ onto a $k$-dimensional subspace $W\in \Gr_k(\R^n)$. By Corollary \ref{corollary:fourier_relation_mu}, $\mathcal{F}(\GW(\mu))$ is uniquely determined by
	\begin{align*}
		\mathcal{F}(\GW(\mu))[y_1\otimes i,\dots, y_k\otimes i]
		=&\frac{1}{k!}\frac{\partial^k}{\partial\lambda_1\dots\partial\lambda_k}\Big|_0\mu\left(\sum_{i=1}^{k}\lambda_i \exp(y_i,\cdot)\right)
	\end{align*}
	for $(y_1,\dots,y_k)\in (\R^n)^k$. Set $W:=\mathrm{span}_\R(y_1,\dots,y_k)$ and consider the function $f\in\Conv(W,\R)$ given by
	\begin{align*}
		f:=\sum_{i=1}^{k}\lambda_i \exp(y_i,\cdot).
	\end{align*}
	Then $\pi_W^*f(x)=\sum_{i=1}^{k}\lambda_i\exp(y_i,x)$ for $x\in\R^n$ and thus
	\begin{align*}
		\mu\left(\sum_{i=1}^{k}\lambda_i \exp(y_i,\cdot)\right)=\mu(\pi_W^*f)=0
	\end{align*}
	by assumption. As this holds for all $\lambda_i\ge 0$, we deduce
	\begin{align*}
		\mathcal{F}(\GW(\mu))[y_1\otimes i,\dots, y_k\otimes i]=&\frac{1}{k!}\frac{\partial^k}{\partial\lambda_1\dots\partial\lambda_k}\Big|_0\mu\left(\sum_{i=1}^{k}\lambda_i \exp(y_i,\cdot)\right)=0
	\end{align*}
	for all $(y_1,\dots,y_k)\in(\R^n)^k$ and thus $\mathcal\GW(\mu)=0$. Since $\mu$ is uniquely determined by its Goodey--Weil distribution, this implies $\mu=0$.
\end{proof}
\begin{corollary}
	\label{corollary:vanishing-on-polynomials}
	If $\mu\in\VConv(\R^n)$ vanishes on all convex polynomials, then $\mu=0$.
\end{corollary}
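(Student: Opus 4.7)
The plan is to reduce to the homogeneous case, exploit polarization on convex polynomials, and then approximate exponentials by \emph{globally convex} polynomials so as to apply Remark \ref{remark:fourier_relation_mu}.

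By the homogeneous decomposition, write $\mu=\sum_{k=0}^n\mu_k$ with $\mu_k\in\VConv_k(\R^n)$. For any convex polynomial $f$ and every $t\ge 0$ the function $tf$ is again a convex polynomial, so $0=\mu(tf)=\sum_k t^k\mu_k(f)$ for all $t\ge 0$ forces $\mu_k(f)=0$ for each $k$. It therefore suffices to fix $k\ge 1$ and show that a $\mu_k\in\VConv_k(\R^n)$ vanishing on all convex polynomials must be zero. For any convex polynomials $f_1,\dots,f_k$ and $\lambda_1,\dots,\lambda_k\ge 0$, the function $\sum_i\lambda_if_i$ is again a convex polynomial, hence $\mu_k\bigl(\sum_i\lambda_if_i\bigr)\equiv 0$; taking the mixed partial derivative at $\lambda=0$ yields $\bar\mu_k(f_1,\dots,f_k)=0$.

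In view of Remark \ref{remark:fourier_relation_mu}, to conclude $\GW(\mu_k)=0$ it remains to prove that this identity persists when each $f_i$ is replaced by $e^{\langle y_i,\cdot\rangle}$ for $y_i\in\R^n$. For this I would use the even Taylor polynomials $P_{2N}(t):=\sum_{j=0}^{2N}t^j/j!$. A short induction shows that $P_{2N}$ is strictly convex and strictly positive on $\R$: assuming this for $P_{2N}$, one has $P_{2N+2}''=P_{2N}>0$, so $P_{2N+2}$ is strictly convex, and at its (unique) minimum $t_0$ the equation $P_{2N+1}(t_0)=0$ gives $P_{2N+2}(t_0)=t_0^{2N+2}/(2N+2)!$, which is positive since $P_{2N+1}(0)=1\ne 0$ forces $t_0\ne 0$. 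Consequently $x\mapsto P_{2N}(\langle y,x\rangle)$ is the composition of a convex function with an affine map, hence convex on $\R^n$, and $P_{2N}(\langle y_i,\cdot\rangle)\to e^{\langle y_i,\cdot\rangle}$ in $C^\infty$ uniformly on every compact subset as $N\to\infty$.

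Since $\GW(\mu_k)$ has compact support, the continuity estimate \eqref{inequality:GW_continuity} (applied after multiplying each argument by a fixed cutoff that is $1$ near $\supp\GW(\mu_k)$) allows us to pass to the limit in each factor separately, giving
\[
\bar\mu_k\bigl(e^{\langle y_1,\cdot\rangle},\dots,e^{\langle y_k,\cdot\rangle}\bigr)=\lim_{N\to\infty}\bar\mu_k\bigl(P_{2N}(\langle y_1,\cdot\rangle),\dots,P_{2N}(\langle y_k,\cdot\rangle)\bigr)=0
\]
for every $(y_1,\dots,y_k)\in(\R^n)^k$. By Remark \ref{remark:fourier_relation_mu} this forces $\mathcal{F}(\GW(\mu_k))\equiv 0$, hence $\GW(\mu_k)=0$, and injectivity of the Goodey-Weil embedding gives $\mu_k=0$. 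The main technical point is the construction of globally convex polynomial approximations of $e^{\langle y,\cdot\rangle}$, which is handled cleanly by the even Taylor truncations above.
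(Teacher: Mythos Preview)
Your proof is correct and follows the same overall strategy as the paper: reduce to the homogeneous case, show that the Goodey--Weil distribution kills the tensors $\exp(\langle y_1,\cdot\rangle)\otimes\dots\otimes\exp(\langle y_k,\cdot\rangle)$ by polynomial approximation, and then invoke Remark~\ref{remark:fourier_relation_mu}. The one genuine difference is in the approximation step. The paper first observes that every polynomial on $\R^n$ is a linear combination of convex polynomials, so that $\GW(\mu_k)$ already vanishes on \emph{all} polynomial tensors; it can then plug in the full Taylor series of the exponential without caring whether the partial sums are convex. You instead stay within the class of convex polynomials by using the even Taylor truncations $P_{2N}(\langle y,\cdot\rangle)$ and verifying their global convexity by induction. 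Both routes are valid; the paper's is marginally shorter since it bypasses the convexity check, while yours has the minor advantage of producing explicit convex approximants and never leaving the class of test functions on which $\bar\mu_k$ is known to vanish.
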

\begin{proof}
	Without loss of generality, we may assume that $\mu$ is $k$-homogeneous, where $0\le k\le n$. For $k=0$ the statement is obviously true, so we may assume $k>0$.\\
	It is easy to see that every polynomial on $\R^n$ may be written as a linear combination of convex polynomials. If $\mu\in\VConv_k(\R^n)$ vanishes on all convex polynomials, this implies that $\GW(\mu)$ vanishes on all functions of the form $P_1\otimes\dots\otimes P_k\in C^\infty((\R^n)^k)$, where $P_1,\dots,P_k$ are polynomials on $\R^n$. Now observe that the sum
	\begin{align*}
		\exp(( y,\cdot))=\sum_{j=0}^{\infty}\frac{( y,\cdot)^j}{j!}
	\end{align*}
	converges locally uniformly in the $C^\infty$-topology for all $y\in \R^n$. In particular,
	\begin{align*}
		\mathcal{F}(\GW(\mu))[y_1\otimes i,\dots,y_k\otimes i]=\sum_{j_1,\dots,j_k=1}^{\infty}\frac{1}{j_1!\dots j_k!}\GW(\mu)[( y_1,\cdot)^{j_1}\otimes\dots\otimes ( y_k,\cdot)^{j_k}]=0
	\end{align*}
	for all $y_1,\dots,y_k\in\R^n$. As in the proof of Theorem \ref{maintheorem:simple_valuations}, this implies $\mu=0$.
\end{proof}

Note that the restriction of $\mu\in\VConv_k(\R^n)$ to a $k$-dimensional subspace defines a valuation of maximal degree on this space. These valuations all admit integral representations with respect to the \emph{real Monge--Amp\`ere operator} $\MA$, which assigns to any $f\in\Conv(\R^n,\R)$ a non-negative Radon measure $\MA(f)$ on $\R^n$, compare \cite{AleskerValuationsconvexfunctions2019}. These measures depend continuously in the weak topology on $f\in\Conv(\R^n,\R)$ and are given by $\det(D^2f(x))dx$ if $f$ is a smooth function, where $D^2f$ denotes the Hessian of $f$. If $E\subset \R^n$ is a $k$-dimensional subspace, we denote the real Monge--Amp\`ere operator on $E$ by $\MA_E$. 
\begin{theorem}[Colesanti--Ludwig--Mussnig \cite{ColesantiEtAlhomogeneousdecompositiontheorem2020} Theorem 5]
	\label{theorem:valuations_top_degree}
	$\mu\in\VConv_n(\R^n)$ if and only if there exists a (necessarily unique) function $\phi\in C_c(\R^n)$ such that
	\begin{align*}
		\mu(f)=\int_{\R^n}\phi d\MA(f).
	\end{align*}
\end{theorem}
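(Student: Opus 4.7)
The plan is to prove the two implications separately, with the reverse direction being essentially a verification and the forward direction requiring the Goodey-Weil machinery developed in the paper.

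For the easy direction, assume $\phi \in C_c(\R^n)$ and set $\mu_\phi(f) := \int_{\R^n} \phi \, d\MA(f)$. I would verify the required properties directly: the valuation property follows because the Monge-Amp\`ere measure itself satisfies $\MA(f \vee g) + \MA(f \wedge g) = \MA(f) + \MA(g)$ whenever $f \vee g \in \Conv(\R^n, \R)$; continuity follows from weak continuity of $\MA$ on $\Conv(\R^n, \R)$ combined with $\phi \in C_c(\R^n)$; $n$-homogeneity is immediate from $\MA(tf) = t^n \MA(f)$; dual epi-translation invariance holds because the Hessian (and hence $\MA$) is unchanged under addition of an affine function. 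Uniqueness of $\phi$ follows from testing on translates of a fixed strictly convex quadratic function, for which $\MA$ is a multiple of Lebesgue measure.

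For the hard direction, given $\mu \in \VConv_n(\R^n)$, I would use the Goodey-Weil distribution $\GW(\mu) \in \mathcal{D}'((\R^n)^n)$. Since $\mu$ is top-homogeneous, the support theorem cited from \cite{Knoerrsupportduallyepi2021} places $\supp \GW(\mu)$ in the diagonal $\Delta_n(\R^n) \subset (\R^n)^n$. The continuity inequality \eqref{inequality:GW_continuity} shows that $\GW(\mu)$ has order at most $2$ in each factor, and dual epi-translation invariance forces $\bar{\mu}(f_1,\ldots,f_n) = 0$ whenever some $f_i$ is affine, so only genuine second-order derivatives of each $f_i$ may contribute. Invoking Schwartz's structure theorem for distributions supported on a submanifold, together with the symmetry and multilinearity constraints, $\GW(\mu)$ must take the form
\begin{align*}
	\GW(\mu)[f_1 \otimes \cdots \otimes f_n] = \phi\bigl[\, x \mapsto D\bigl(D^2 f_1(x), \ldots, D^2 f_n(x)\bigr)\,\bigr]
\end{align*}
for some compactly supported distribution $\phi$ on $\R^n$, where $D$ denotes the mixed discriminant. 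Compactness of $\supp \phi$ follows from that of $\supp \GW(\mu)$ via the diagonal embedding.

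The main obstacle is upgrading $\phi$ from a distribution to a genuine continuous function. My plan here would be to test $\mu$ against families of smooth convex functions whose Hessians approximate a rank-one matrix $v \otimes v$ concentrated near a prescribed point $x_0$; the estimate \eqref{inequality:GW_continuity} controls the resulting expressions in the $C^2$-norm, and by varying $v$ and $x_0$ one extracts $\phi(x_0)$ as a continuous function of $x_0$. Once $\phi \in C_c(\R^n)$ is established, the identity $\mu(f) = \int \phi \, d\MA(f)$ is proved first for smooth strictly convex $f$ (where $D(D^2 f, \ldots, D^2 f) = \det D^2 f$) by comparing polarizations, and is then extended to all $f \in \Conv(\R^n, \R)$ by the density of smooth convex functions in $\Conv(\R^n, \R)$ combined with continuity of both sides (the right-hand side by weak continuity of $\MA$, the left-hand side by hypothesis). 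This continuity/regularity step for $\phi$ is the crux; the remaining pieces are standard given the Goodey-Weil formalism already set up in the paper.
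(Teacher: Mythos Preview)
This theorem is \emph{cited} in the paper from Colesanti--Ludwig--Mussnig and is not proved there; the paper simply invokes it as a known classification result. So there is no proof in the paper to compare against.

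That said, your sketch has a genuine gap at exactly the point you flag as the crux: upgrading the distribution $\phi$ to a continuous function. The $C^2$-bound \eqref{inequality:GW_continuity} only tells you that $\GW(\mu)$ has order at most $2$ in each factor; after pushing this through the Schwartz structure theorem on the diagonal, this is compatible with $\phi$ being a Radon measure (or worse), not a continuous function. Your proposed remedy---testing against bump functions whose Hessians approximate rank-one matrices concentrated near a point---does not extract pointwise values of $\phi$ from a measure; a Dirac mass at $x_0$ would survive all such tests. The original argument of Colesanti--Ludwig--Mussnig does not go via the Goodey--Weil distribution at all but rather through a direct analysis of the polarization and an approximation/extension argument specific to top degree.

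If you want to repair your approach within the framework of this paper, note that Proposition~\ref{prop:continutyCharacFunction} contains the key missing observation: one has the explicit formula $\phi(x)=\frac{1}{\omega_n}\mu(h_{B_1(0)}(\cdot-x))$, obtained by evaluating $\mu$ on translates of the support function of the unit ball (whose Monge--Amp\`ere measure is a point mass). This formula immediately exhibits $\phi$ as a continuous, compactly supported function of $x$, since $x\mapsto h_{B_1(0)}(\cdot-x)$ is continuous into $\Conv(\R^n,\R)$ and $\mu$ is continuous. With this in hand, one checks that $\mu$ and $\int\phi\,d\MA$ agree on smooth convex functions (via polarization and the mixed discriminant, as you outline) and then extends by density. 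The Goodey--Weil apparatus is not actually needed.
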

To be more precise, the version of this result given by Colesanti, Ludwig, and Mussnig in \cite{ColesantiEtAlhomogeneousdecompositiontheorem2020} only states that any valuation $\mu\in\VConv_n(\R^n)$ admits such an integral representation, however, the uniqueness is essentially contained in their proof. As it is not stated directly and the uniqueness is essential for the arguments in later sections, we provide the missing argument in the next lemma. It provides some information on the general structure of the Fourier--Laplace transform of the Goodey--Weil distributions.
\begin{lemma}
	\label{lemma:interpretation_fourier_maxdegree}
	For $\phi\in C_c(\R^n)$ let $\mu_\phi:=\int_{\R^n}\phi d\MA\in\VConv_n(\R^n)$. Then
	\begin{align*}
		\mathcal{F}(\GW(\mu_\phi))[w_1,\dots,w_n]=\frac{(-1)^n}{n!}\det\left(
		( w_i,w_j)\right)_{i,j=1}^n\mathcal{F}(\phi)\left[\sum_{i=1}^{k}w_i\right].
	\end{align*}
	In particular, $\phi$ is uniquely determined by $\mu_\phi$.\\
	Similarly, let $\mu\in \VConv_k(\R^n)$. For $E\in \Gr_k(\R^n)$ let $\phi_E\in C_c(E)$ denote the unique function such that $\pi_{E*}\mu=\int_E\phi_Ed\MA_E$. Then for $w_1,\dots,w_k\in E_\C:=E\otimes_\R\C$
	\begin{align*}
		\mathcal{F}(\GW(\mu))[w_1,\dots,w_k]=\frac{(-1)^k}{k!}\det\left(
		( w_i,w_j)\right)_{i,j=1}^k\mathcal{F}_E(\phi_E)\left[\sum_{i=1}^{k}w_i\right].
	\end{align*}
	Here $\mathcal{F}_E$ denotes the Fourier--Laplace transform on $L^1(E)$ and $(\cdot,\cdot)$ denotes the $\C$-linear extension of the standard scalar product on $\R^n$.
\end{lemma}
\begin{proof} 
	Note that the second part follows directly from the first as $\mathcal{F}(\GW(\mu))[w_1,\dots,w_k]=\mathcal{F}(\GW(\pi_{E*}\mu))[w_1,\dots,w_k]$ for $w_1,\dots,w_k\in E_\C$ due to Corollary \ref{corollary:fourier_relation_mu}.\\
	Let us show the first equation. As both sides are holomorphic functions, it is sufficient to verify the equation for $w_j=y_j\otimes i$, $1\le j\le n$, where $y_1,\dots,y_n\in\R^n$. Let $D_n(A_1,\dots,A_n)$ denote the mixed discriminant of $n$ symmetric $(n\times n)$-matrices $A_1,\dots,A_n$. Using Corollary \ref{corollary:fourier_relation_mu} we obtain
	\begin{align*}
		\mathcal{F}(\GW(\mu_\phi))[w_1,\dots,w_k]=&\frac{1}{n!}\frac{\partial^n}{\partial\lambda_1\dots\partial\lambda_n}\Big|_0\int_{\R^n}\phi d\MA\left(\sum_{j=1}^{n}\exp( y_j,\cdot)\right)\\
		=&\int_{\R^n}\phi(x) D_n( \exp(y_1,x) y_1\cdot y_1^T,\dots,\exp(y_n,x) y_n\cdot y_n^T )d\vol_n(x)\\
		=&D_n(y_1\cdot y_1^T,\dots, y_n\cdot y_n^T)\int_{\R^n}\phi(x) \exp\left( \sum_{j=1}^ny_n,x\right)d\vol_n(x)\\
		=&\frac{1}{n!}\det\left(\sum_{j=1}^n y_j\cdot y_j^T\right) \mathcal{F}(\phi)\left[\sum_{j=1}^{n}y_j\otimes i\right].
	\end{align*}
	It is easy to see that $\det\left(\sum_{j=1}^n y_j\cdot y_j^T\right)=\det\left(
	( y_i,y_j)\right)_{i,j=1}^n$, which establishes the first formula.\\
	In order to see that $\phi$ is uniquely determined by $\mu_\phi$, note that $\mathcal{F}(\GW(\mu_\phi))$ determines $\mathcal{F}(\phi)$ on a non-empty open set. As $\mathcal{F}(\phi)$ is a holomorphic function, this determines $\mathcal{F}(\phi)$ uniquely and thus also $\phi$, because the Fourier--Laplace transform is injective.
\end{proof} 
Notice that Lemma \ref{lemma:interpretation_fourier_maxdegree} implies that $\mathcal{F}(\GW(\mu))$ is uniquely determined by its values on $k$-tuples of orthogonal vectors. The following corollary shows that the Fourier--Laplace transform of the Goodey--Weil distributions encodes precisely the information obtained from restricting the valuations to subspaces.
\begin{corollary}
	\label{cor:relationFourierRestriction}
	Let $\mu\in\VConv_k(\R^n)$, $E\in\Gr_k(\R^n)$. Then $\mathcal{F}(\GW(\mu))[w_1,\dots,w_k]=0$ for all $w_1,\dots,w_k\in E_\C$ if and only if $\pi_{E*}\mu=0$.
\end{corollary}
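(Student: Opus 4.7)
The plan is to reduce both implications of the corollary to Lemma \ref{lemma:interpretation_fourier_maxdegree} combined with the uniqueness of the representation in Theorem \ref{theorem:valuations_top_degree} and the injectivity of the Fourier transform on compactly supported continuous functions.

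First I would handle the forward direction. Assume $\pi_{E*}\mu=0$. Applying Theorem \ref{theorem:valuations_top_degree} (to $E$ as an abstract $k$-dimensional Euclidean space) gives a unique $\phi_E\in C_c(E)$ with $\pi_{E*}\mu=\int_E\phi_E\, d\MA_E$; uniqueness forces $\phi_E=0$, hence $\mathcal{F}_E(\phi_E)\equiv 0$. The formula from Lemma \ref{lemma:interpretation_fourier_maxdegree} then immediately yields $\mathcal{F}(\GW(\mu))[w_1,\dots,w_k]=0$ for all $w_1,\dots,w_k\in E_\C$.

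For the reverse direction I would exploit the explicit factorization in Lemma \ref{lemma:interpretation_fourier_maxdegree} by choosing test tuples adapted to an orthonormal basis. Pick an orthonormal basis $e_1,\dots,e_k$ of $E$, and for an arbitrary point $z=\sum_{i=1}^{k}z_ie_i\in E_\C$ set $w_i:=z_ie_i$. Since $\langle e_i,e_j\rangle=\delta_{ij}$, one has $\langle w_i,w_j\rangle=z_iz_j\delta_{ij}$, so $\det(\langle w_i,w_j\rangle)_{i,j=1}^k=\prod_{i=1}^k z_i^2$ and $\sum_{i=1}^{k}w_i=z$. Lemma \ref{lemma:interpretation_fourier_maxdegree} then gives
\begin{align*}
0=\mathcal{F}(\GW(\mu))[w_1,\dots,w_k]=\frac{(-1)^k}{k!}\left(\prod_{i=1}^{k}z_i^2\right)\mathcal{F}_E(\phi_E)(z).
\end{align*}
Hence $\mathcal{F}_E(\phi_E)$ vanishes on the open set $\{z\in E_\C:z_i\ne 0\text{ for all }i\}$. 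Since $\phi_E\in C_c(E)$, its Fourier-Laplace transform is entire on $E_\C$, so vanishing on an open subset forces $\mathcal{F}_E(\phi_E)\equiv 0$. By the injectivity of the Fourier transform this yields $\phi_E=0$ and thus $\pi_{E*}\mu=0$.

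There is no real obstacle: the only mildly delicate point is that the naive substitution into Lemma \ref{lemma:interpretation_fourier_maxdegree} only controls $\mathcal{F}_E(\phi_E)$ away from the coordinate hyperplanes $\{z_i=0\}$, but the entire extension of a compactly supported function's Fourier-Laplace transform makes this harmless via analytic continuation.
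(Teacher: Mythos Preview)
Your proof is correct and follows essentially the same approach as the paper: both directions hinge on the factorization in Lemma~\ref{lemma:interpretation_fourier_maxdegree}, the uniqueness in Theorem~\ref{theorem:valuations_top_degree}, and the injectivity of the Fourier transform. The only cosmetic difference is that the paper invokes Remark~\ref{remark:fourier_relation_mu} for the forward implication (vanishing on real $E^k$, then analytic continuation) rather than Lemma~\ref{lemma:interpretation_fourier_maxdegree}, and it is less explicit than you are about isolating $\mathcal{F}_E(\phi_E)$ via an orthonormal basis in the reverse direction.
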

\begin{proof}
	Corollary \ref{corollary:fourier_relation_mu} shows that $\pi_{E*}\mu=0$ implies that $\mathcal{F}(\GW(\mu))[w_1,\dots,w_k]=0$ for all $w_1,\dots,w_k\in E$. For the converse statement, let $\phi_E\in C_c(E)$ denote the unique function such that $\pi_{E*}\mu(f)=\int_E\phi_Ed\MA_E(f)$ for all $f\in \Conv(E,\R)$. If $\mathcal{F}(\GW(\mu))$ vanishes on $(E_\C)^k$, then $\mathcal{F}_E(\phi_E)=0$ by  Lemma \ref{lemma:interpretation_fourier_maxdegree}, so $\phi_E=0$, which implies $\pi_{E*}\mu=0$.
\end{proof}

\subsection{Restrictions on the support}
\label{section:restrictions_support}
This section establishes some restrictions on the support of a $k$-homogeneous valuation, which strengthen \cite[Proposition 6.4]{Knoerrsupportduallyepi2021} and are of independent interest. In addition, Corollary \ref{corollary:support-top-degree} is used to establish support restrictions for the integral representation of smooth valuations in Section \ref{section:characSmoothVal}, which are required in \cite{Knoerrgeometricdecompositionunitarily2024}.\\
Before we state these results, let us first add the following fact on the behavior of the support under the pushforward by a linear map $T:V\rightarrow W$ between finite dimensional vector spaces.
\begin{proposition}
	\label{proposition:behavior-support-pushforward}
	Let $T: V\rightarrow W$ be a linear map and $\mu\in\VConv(V)$. Then $\supp(T_*\mu)\subset T(\supp\mu)$.
\end{proposition}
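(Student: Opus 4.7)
The plan is to use the characterization of the support from Proposition \ref{proposition:characterization_support}: the support of a valuation is the minimal closed set $A$ such that whenever two functions agree on an open neighborhood of $A$, the valuation takes the same value on them. Since $\supp\mu$ is compact and $T$ is continuous, $T(\supp\mu) \subset W$ is closed (indeed compact), so it is a legitimate candidate set to compare against $\supp(T_*\mu)$, and it suffices to verify that $T(\supp\mu)$ has this defining property for $T_*\mu$; the inclusion $\supp(T_*\mu)\subset T(\supp\mu)$ then follows by minimality.

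Concretely, I would let $f,g\in\Conv(W,\R)$ and assume $f=g$ on an open neighborhood $U\subset W$ of $T(\supp\mu)$. Then the preimage $T^{-1}(U)$ is open in $V$ and contains $\supp\mu$, and the pulled-back functions $T^*f=f\circ T$ and $T^*g=g\circ T$ lie in $\Conv(V,\R)$ and coincide on $T^{-1}(U)$, which is an open neighborhood of $\supp\mu$. Applying Proposition \ref{proposition:characterization_support} to $\mu$ itself yields $\mu(T^*f)=\mu(T^*g)$, that is, $(T_*\mu)(f)=(T_*\mu)(g)$. Hence $T(\supp\mu)$ has the required property, and the minimality clause of Proposition \ref{proposition:characterization_support} gives $\supp(T_*\mu)\subset T(\supp\mu)$.

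There is no real obstacle here; the only mild subtlety is handling the degenerate case where $\mu\in\VConv_0(V)$ is constant and $\supp\mu=\emptyset$. In this situation $T_*\mu$ is also a constant valuation with empty support, so the inclusion holds trivially, and the argument above goes through without change because any neighborhood of $\emptyset$ works vacuously. One should also note that the pullback $T^*f=f\circ T$ is indeed in $\Conv(V,\R)$ since $f$ is finite-valued and convex and $T$ is linear, so the application of Proposition \ref{proposition:characterization_support} is legitimate.
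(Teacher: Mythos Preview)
Your proof is correct and follows essentially the same approach as the paper's own proof: both use Proposition~\ref{proposition:characterization_support} by checking that if $f=g$ on a neighborhood $U$ of $T(\supp\mu)$ then $T^*f=T^*g$ on the neighborhood $T^{-1}(U)$ of $\supp\mu$, whence $(T_*\mu)(f)=(T_*\mu)(g)$ and minimality gives the inclusion. Your additional remarks on compactness of $T(\supp\mu)$, the degenerate constant case, and the fact that $T^*f\in\Conv(V,\R)$ are minor clarifications not spelled out in the paper but are all correct.
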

\begin{proof}
	Let $f,g\in \Conv(W,\R)$ be two functions with $f=g$ on a neighborhood $U$ of $T(\supp\mu)$. Then $T^*f$ and $T^*g$ coincide on $T^{-1}(U)$, which is a neighborhood of $\supp\mu$. Thus $[T_*\mu](f)=\mu(T^*f)=\mu(T^*g)=[T_*\mu](g)$ by Proposition \ref{proposition:characterization_support}. As this is true for all $f$ and $g$ with this property, Proposition \ref{proposition:characterization_support} implies $\supp(T_*\mu)\subset T(\supp\mu)$.
\end{proof}
\begin{proposition}
	\label{proposition:shape_support}
	Let $1\le k\le n$ and assume that the support of $\mu\in\VConv_k(V)$ is contained in a $(k-1)$-dimensional affine subspace. Then $\mu=0$. In particular, its support is empty.
\end{proposition}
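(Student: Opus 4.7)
The plan is to reduce to Theorem \ref{maintheorem:simple_valuations} via the pushforward and then to handle the resulting top-degree valuation by exploiting the Monge-Amp\`ere representation. After fixing an identification $V \cong \R^n$, it suffices by Theorem \ref{maintheorem:simple_valuations} to verify $\pi_{E*}\mu = 0$ for every $k$-dimensional subspace $E \subset \R^n$. If $A$ denotes the $(k-1)$-dimensional affine subspace containing $\supp \mu$, Proposition \ref{proposition:behavior-support-pushforward} gives $\supp \pi_{E*}\mu \subset \pi_E(A)$, and $\pi_E(A)$ is contained in an affine subspace $B \subset E$ of dimension at most $k - 1 < \dim E$. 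Thus the problem reduces to the following top-degree claim: a valuation $\nu \in \VConv_k(E)$ with $\dim E = k$ whose support lies in a proper affine subspace $B \subsetneq E$ must vanish.

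For this claim, I would invoke Theorem \ref{theorem:valuations_top_degree} to write $\nu(f) = \int_E \phi\, d\MA_E(f)$ for a unique $\phi \in C_c(E)$ and aim to show $\phi \equiv 0$. The strategy is to establish that $\phi$ is harmonic on $E \setminus \supp \nu$, which contains $E \setminus B$. Since $B$ has empty interior in $E$ and every connected component of $E \setminus B$ is unbounded (because $B$ is a proper affine subspace of a vector space), the compact support of $\phi$ forces it to vanish on a non-empty open subset of each such component; by real-analyticity of harmonic functions, $\phi \equiv 0$ on $E \setminus B$, and by continuity $\phi \equiv 0$ on all of $E$.

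For the harmonicity step, fix $x_0 \notin \supp \nu$ and pick $\epsilon > 0$ with $\overline{B_\epsilon(x_0)} \cap \supp \nu = \emptyset$. For any $a > 0$, compare the cone $f(x) := a|x - x_0|$ with its flattened variant $g(x) := a\max(|x - x_0|, \epsilon)$. Both lie in $\Conv(E, \R)$ and coincide on the open neighborhood $E \setminus \overline{B_\epsilon(x_0)}$ of $\supp \nu$, so Proposition \ref{proposition:characterization_support} yields $\nu(f) = \nu(g)$. A direct computation of subdifferential images shows that $\MA_E(f) = \omega_k a^k \delta_{x_0}$, whereas $\MA_E(g) = \frac{a^k}{k \epsilon^{k-1}}\, \mathcal{H}^{k-1}|_{\partial B_\epsilon(x_0)}$, so the identity $\nu(f) = \nu(g)$ translates to
\begin{equation*}
\phi(x_0) = \frac{1}{k\omega_k \epsilon^{k-1}} \int_{\partial B_\epsilon(x_0)} \phi \, d\mathcal{H}^{k-1}
\end{equation*}
for every admissible $\epsilon$. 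For $k \ge 2$ this is the classical spherical mean value characterization of harmonic functions; for $k = 1$ the identity reads $2\phi(x_0) = \phi(x_0+\epsilon) + \phi(x_0-\epsilon)$, which combined with continuity forces $\phi$ to be affine (hence equally good for the final step) on each connected component of $E \setminus \supp \nu$.

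The main obstacle I anticipate is producing a sufficiently rich family of convex modifications agreeing near $\supp \nu$ to pin down $\phi$; the cone/flattened-cone pair is the simplest candidate, and fortunately the mean value property it yields is already strong enough to imply harmonicity, after which unique continuation closes the argument. The concluding sentence of the proposition, that $\supp \mu = \emptyset$, is then immediate from $\mu = 0$, since all Goodey-Weil distributions of $\mu$ vanish.
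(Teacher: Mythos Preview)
Your proof is correct and follows the same overall architecture as the paper: reduce to the top-degree case via Theorem~\ref{maintheorem:simple_valuations} and Proposition~\ref{proposition:behavior-support-pushforward}, then show that the density $\phi$ in the Monge--Amp\`ere representation is harmonic off the support, and conclude by real-analyticity and compact support.

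The genuine difference lies in how harmonicity of $\phi$ on $E\setminus\supp\nu$ is established. The paper perturbs the smooth convex function $\tfrac12|x|^2$ by $t\psi$ with $\psi\in C^\infty_c$ supported away from the hyperplane, differentiates $\det(I+tD^2\psi)$ at $t=0$ to obtain $\int\phi\,\Delta\psi=0$, and then invokes elliptic regularity to pass from the distributional equation $\Delta\phi=0$ to classical harmonicity. Your route avoids both the differentiation of the determinant and the regularity step: by comparing the cone $a|x-x_0|$ with its flattened version you read off the spherical mean value identity for $\phi$ directly from the explicit Monge--Amp\`ere measures, and the mean value characterization of harmonic (resp.\ affine, when $k=1$) functions does the rest. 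This is more elementary---no distribution theory, no Weyl-type lemma---at the price of computing $\MA_E$ for two non-smooth convex functions; the paper's approach is slicker analytically but leans on a black-box regularity result. Either way, the final unique-continuation step is identical.
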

\begin{proof}
	Let us assume that $V=\R^n$ with its standard scalar product. We will start with valuations of degree $k=n$. If $\mu\in\VConv_n(\R^n)$, there exists a unique function $\phi\in C_c(\R^n)$ such that
	\begin{align*}
		\mu(f)=\int_{\R^n}\phi(x)\det(D^2f(x))dx\quad\text{for all }f\in\Conv(\R^n,\R)\cap C^2(\R^n)
	\end{align*}
	by Theorem \ref{theorem:valuations_top_degree}. Let us assume that the support of $\mu$ is contained in an affine hyperplane $H$ of $\R^n$. Let $H^\pm$ denote the positive and negative open half spaces with respect to some orientation of $H$. If $\psi\in C^\infty_c(\R^n)$ is a function with $\supp\psi\subset\R^n\setminus H$, the characterization of the support in Proposition \ref{proposition:characterization_support} implies
	\begin{align*}
		\mu(f+t\psi)=\mu(f)
	\end{align*}
	for all $t\in\R$ and $f\in\Conv(\R^n,\R)$ such that $f+t\psi$ is convex. We may choose $f(x)=\frac{1}{2}|x|^2$. Then this equation holds for all $t\in(-\epsilon,\epsilon)$ for some $\epsilon>0$. Thus
	\begin{align*}
		0=&\frac{d}{dt}\Big|_0\int_{\R^n}\phi(x)\det(Id_n+tD^2\psi(x))dx=\int_{\R^n}\phi(x)\frac{d}{dt}\Big|_0\det(Id_n+tD^2\psi(x))dx\\
		=&\int_{\R^n}\phi(x)\mathrm{tr}(D^2\psi(x))dx=\int_{\R^n}\phi(x)\Delta\psi(x)dx.
	\end{align*}
	In other words, $\phi$ satisfies $\Delta\phi=0$ on $H^\pm$ in the distributional sense. It is a standard fact from regularity theory that any distributional solution of Laplace's equation is a classical solution. Thus $\phi$ is harmonic on $H^\pm$ and thus in particular analytic on $H^\pm$. However, $\phi$ is compactly supported, so there exists an open set in $H^\pm$ such that $\phi=0$ on this set. By the identity theorem, $\phi= 0$ on the open sets $H^\pm$, that is, $\phi=0$ on the dense open subset $H^+\cup H^-$. Thus $\phi=0$ by continuity, which implies $\mu=0$.\\
	Now let $1\le k\le n-1$ be given and let $\mu\in\VConv_k(\R^n)$ be such that $\supp\mu$ is contained in a $(k-1)$-dimensional affine subspace $H$. If $W$ is a $k$-dimensional subspace, then the image of $H$ under the orthogonal projection $\pi:\R^n\rightarrow W$ is an affine subspace of dimension at most $k-1$. By Proposition \ref{proposition:behavior-support-pushforward}, the support of $\pi_*\mu\in\VConv_k(W)$ is thus contained in an affine subspace of dimension at most $k-1$. However, this is a valuation of degree $k=\dim W$, so the previous argument implies $\pi_*\mu=0$. As this is true for all orthogonal projections $\pi:\R^n\rightarrow W$ onto $k$-dimensional subspaces $W\subset \R^n$, Theorem \ref{maintheorem:simple_valuations} implies $\mu=0$.
\end{proof}
The proof provides the following slightly more refined result for $n$-homogeneous valuations.
\begin{corollary}
	\label{corollary:support-top-degree}
	If $\mu\in\VConv_n(\R^n)$ is given by $\mu=\int_{\R^n}\phi d\MA$ for $\phi\in C_c(\R^n)$, then $\supp\Delta\phi\subset \supp\mu$, where we understand $\Delta\phi$ in the sense of distributions. In particular, $\partial \supp\phi\subset \supp\mu\subset\supp\phi\subset \mathrm{conv}(\supp\mu)$.
\end{corollary}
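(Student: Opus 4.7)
The plan is to transplant the computation from the proof of Proposition \ref{proposition:shape_support} into the present setting. For any $\psi \in C^\infty_c(\R^n \setminus \supp\mu)$, Proposition \ref{proposition:characterization_support} gives $\mu(f + t\psi) = \mu(f)$ for every $f \in \Conv(\R^n,\R)$ and every $t$ for which $f + t\psi$ remains convex. Choosing $f(x) = \frac{1}{2}|x|^2$, convexity is preserved for all sufficiently small $|t|$, so the integral representation together with differentiation under the integral sign gives
\begin{align*}
0 = \frac{d}{dt}\Big|_0 \int_{\R^n} \phi(x)\, \det\bigl(Id_n + tD^2\psi(x)\bigr)\, dx = \int_{\R^n} \phi\, \Delta\psi\, dx,
\end{align*}
i.e.\ $\Delta \phi = 0$ on $\R^n \setminus \supp\mu$ in the distributional sense. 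This already yields $\supp \Delta\phi \subset \supp\mu$.

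For the three consequences, I first note that elliptic regularity makes $\phi$ real analytic on the open set $U := \R^n \setminus \supp\mu$. To obtain $\supp\phi \subset \mathrm{conv}(\supp\mu)$, observe that $\R^n \setminus \mathrm{conv}(\supp\mu)$ is contained in the union of the unbounded components of $U$, and on each such component $\phi$ is harmonic and vanishes outside a compact set, hence vanishes identically by analytic continuation. For $\partial \supp\phi \subset \supp\mu$, if $x \in \partial \supp\phi$ were outside $\supp\mu$ then $\phi$ would be real analytic on a ball around $x$ and vanish on the nonempty open subset of that ball lying outside $\supp\phi$, forcing $\phi \equiv 0$ on the whole ball, contradicting $x \in \supp\phi$. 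Finally, for $\supp\mu \subset \supp\phi$, suppose $f, g \in \Conv(\R^n,\R)$ agree on an open neighborhood $V$ of $\supp\phi$; by the local nature of $\MA$ (which follows from weak continuity of $\MA$ together with the smooth formula $\det(D^2f)\,dx$), the measures $\MA(f)$ and $\MA(g)$ coincide on $V$, and since $\phi$ is supported in $V$ one obtains $\mu(f) = \mu(g)$. Proposition \ref{proposition:characterization_support} then forces $\supp\mu \subset \supp\phi$.

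The main obstacle is essentially bookkeeping rather than substantive. The first step uses that Theorem \ref{theorem:valuations_top_degree} yields the integral representation on all $C^2$ convex perturbations $f + t\psi$, which is automatic. The subtler point is the local determinacy of the real Monge-Amp\`ere operator used in the last inclusion, but this is a standard property of $\MA$ following from weak continuity and the smooth formula. Everything else is a direct combination of Proposition \ref{proposition:shape_support}, interior elliptic regularity for Laplace's equation, and analytic continuation.
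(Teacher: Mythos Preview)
Your proof is correct and follows exactly the approach the paper indicates: the paper merely says ``A slight variation of this argument shows the following result,'' and you have carried out that variation in full detail, reusing the perturbation computation from Proposition~\ref{proposition:shape_support} to get $\Delta\phi=0$ off $\supp\mu$, then extracting the three inclusions via harmonicity/analyticity and the locality of $\MA$. There is nothing to add.
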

\begin{proof}
	In the proof of Proposition \ref{proposition:shape_support}, we saw that $\phi$ satisfies $\Delta\phi=0$ on $\R^n\setminus \supp\mu$, so $\supp\Delta\phi\subset \supp\mu$. Recall that $\Delta\phi=0$ on some open set implies that $\phi$ is analytic on this set. As $\phi$ is not analytic in any neighborhood of a given point in the boundary of its support, this implies $\partial \supp\phi\subset \supp \Delta\phi$. Obviously, $\supp\mu\subset \supp\phi$. In order to see that $\supp\phi\subset \mathrm{conv}(\supp\mu)$, note that $\Delta\phi=0$ on the connected unbounded open set $\R^n\setminus \mathrm{conv}(\supp\mu)$. As $\phi$ is compactly supported and analytic on this set, this implies $\phi=0$ on $\R^n\setminus \mathrm{conv}(\supp\mu)$, that is, $\supp\phi\subset \mathrm{conv}(\supp\mu)$.
\end{proof}

Note that the support of a $k$-homogeneous valuation may still be contained in a union of lower dimensional affine subspaces, as shown by the valuation $\mu\in\VConv_1(V)$ with discrete support given by 
\begin{align}
	\label{equation:1-hom_three_points}
	\mu(f)=f(x)+f(-x)-2f(0)\quad\text{for all }f\in\Conv(V,\R),
\end{align}
where $x\in V\setminus\{0\}$ is an arbitrary point. However, the shape of the connected components of the support is still restricted as exemplified by the following result.
\begin{corollary}
	For $k\ge 2$ the support of $\mu\in\VConv_k(V)$ is not discrete unless $\mu=0$.
\end{corollary}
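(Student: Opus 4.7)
The plan is to reduce to the top-degree case by projecting to $k$-dimensional subspaces and then exploit the harmonicity observation in Corollary \ref{corollary:support-top-degree}. Fix a basis and identify $V$ with $\R^n$. The support of $\mu$ is always compact, and by assumption it is discrete, hence finite; write $\supp\mu=\{x_1,\dots,x_m\}$.

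Let $E\subset\R^n$ be any $k$-dimensional subspace. By Proposition \ref{proposition:behavior-support-pushforward}, the support of $\pi_{E*}\mu\in\VConv_k(E)$ is contained in the finite set $\pi_E(\supp\mu)\subset E$. Since $\dim E=k$, Theorem \ref{theorem:valuations_top_degree} produces a function $\phi_E\in C_c(E)$ with $\pi_{E*}\mu=\int_E\phi_E\,d\MA_E$, and Corollary \ref{corollary:support-top-degree} then gives $\supp\Delta\phi_E\subset\supp(\pi_{E*}\mu)$, again finite. Elliptic regularity promotes $\phi_E$ to a classical harmonic function on the open set $E\setminus\pi_E(\supp\mu)$.

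The decisive use of the hypothesis $k\ge 2$ is the following: the complement of a finite set in $E\cong\R^k$ is connected. Since $\phi_E$ is continuous and compactly supported, it vanishes on the exterior of a large ball, which is a nonempty open subset of this connected open set. The identity theorem for harmonic functions then forces $\phi_E\equiv 0$ on $E\setminus\pi_E(\supp\mu)$, and hence $\phi_E\equiv 0$ on $E$ by continuity. Therefore $\pi_{E*}\mu=0$ for every $k$-dimensional subspace $E\subset\R^n$, and Theorem \ref{maintheorem:simple_valuations} yields $\mu=0$.

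There is no substantial obstacle beyond this connectedness point; the harmonicity argument is essentially the same as the one already used in the proof of Proposition \ref{proposition:shape_support}. The hypothesis $k\ge 2$ is used only to guarantee that removing finitely many points from $\R^k$ leaves a connected set, and this is precisely the step that fails for $k=1$, matching the counterexample in equation \eqref{equation:1-hom_three_points}.
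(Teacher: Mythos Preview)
Your proof is correct and follows essentially the same route as the paper: reduce to the top-degree case on each $k$-dimensional subspace via Proposition~\ref{proposition:behavior-support-pushforward}, use Corollary~\ref{corollary:support-top-degree} to control the density $\phi_E$, and conclude with Theorem~\ref{maintheorem:simple_valuations}. The only cosmetic difference is that the paper invokes the packaged consequence $\partial\supp\phi_E\subset\supp(\pi_{E*}\mu)$ and argues that the boundary of a nonempty bounded open set in $\R^k$, $k\ge 2$, cannot be discrete, whereas you unpack the harmonicity of $\phi_E$ directly and use the identity theorem on the connected set $\R^k\setminus\{\text{finitely many points}\}$; these are equivalent uses of the same topological fact.
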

\begin{proof}
	Let $\mu\in\VConv_k(V)$ be a valuation with discrete support, which is thus a finite set due to the compactness of the support. Consider the projection $\pi:V\rightarrow E$ onto a $k$-dimensional subspace. By Proposition \ref{proposition:behavior-support-pushforward}, $\supp\pi_*\mu\subset\pi(\supp\mu)$, so $\supp\pi_*\mu$ is a discrete set. If $\pi_*\mu\ne 0$, then it is a non-trivial valuation of degree $k=\dim E$ on $E$, so its support contains the boundary of an open, non-empty, relatively compact subset by the previous corollary. As $\dim E=k\ge 2$, the boundary of such a subset is not discrete, so we obtain a contradiction. Thus $\pi_*\mu=0$. But this holds for all projections $\pi:V\rightarrow E$ onto $k$-dimensional subspaces of $V$, so Theorem \ref{maintheorem:simple_valuations} implies $\mu=0$.
\end{proof}
Note that the example in \eqref{equation:1-hom_three_points} is $1$-homogeneous and supported on three points. This example is minimal in the following sense:
\begin{corollary}
	If the support of $\mu\in\VConv(V)$ is contained in a two-point set, then $\supp\mu=\emptyset$ and $\mu$ is constant.
\end{corollary}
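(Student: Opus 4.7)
The plan is to decompose $\mu$ into homogeneous components and handle each degree separately. Writing $\mu=\sum_{k=0}^{\dim V}\mu_k$, the component $\mu_0$ is constant and by convention has empty support, while for $k\ge 1$ the definition of the support gives $\supp\mu_k\subset\supp\mu\subset\{x,y\}$. For $k\ge 2$ the preceding corollary on discrete supports immediately forces $\mu_k=0$, so the problem reduces to showing that any $1$-homogeneous valuation with support in a two-point set vanishes.

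For $\mu_1$, I would apply Theorem \ref{maintheorem:simple_valuations}: it suffices to show $\pi_{E*}\mu_1=0$ for every one-dimensional subspace $E\subset V$. By Proposition \ref{proposition:behavior-support-pushforward}, $\supp(\pi_{E*}\mu_1)\subset\pi_E(\{x,y\})$, which again has at most two elements of $E\cong\R$. Thus the task further reduces to the one-dimensional claim that every $\nu\in\VConv_1(\R)$ whose support consists of at most two points must vanish.

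To prove this claim, Theorem \ref{theorem:valuations_top_degree} produces a unique $\phi\in C_c(\R)$ with $\nu(f)=\int_\R\phi\, d\MA(f)$; in one dimension this reads $\nu(f)=\int_\R\phi f''$ for smooth test functions. Let $\{p,q\}\supset\supp\nu$ with $p\le q$. Corollary \ref{corollary:support-top-degree} delivers the two constraints $\supp\phi\subset\mathrm{conv}(\supp\nu)\subset[p,q]$ and $\supp\phi''\subset\supp\nu\subset\{p,q\}$ (with $\phi''$ understood distributionally). Hence $\phi$ is affine on the open interval $(p,q)$ and vanishes outside $[p,q]$; continuity of $\phi\in C_c(\R)$ then forces this affine piece to vanish at both endpoints and hence to be identically zero. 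Thus $\phi\equiv 0$, so $\nu=0$, and combining everything yields $\mu_1=0$, so $\mu=\mu_0$ is constant with empty support.

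The main obstacle is the final one-dimensional step: one has to rule out a tent-type function like the one underlying the three-point example \eqref{equation:1-hom_three_points}, where a central support point allows the slope of $\phi$ to jump and produce a nontrivial piecewise linear $\phi$. With only two available support points $p,q$, the combination of continuity of $\phi$ and the vanishing of $\phi$ outside $[p,q]$ leaves no room for a nonzero affine interpolant, which is exactly the mechanism that distinguishes the two-point case from the three-point case.
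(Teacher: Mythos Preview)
Your proof is correct and follows essentially the same strategy as the paper: reduce to homogeneous components, dispose of degrees $k\ge 2$ via a support-size result, and for $k=1$ reduce to $\VConv_1(\R)$ where one shows the density $\phi$ is affine on $(p,q)$ and vanishes at the endpoints by continuity. The only cosmetic differences are that you invoke the discrete-support corollary for $k\ge 2$ (the paper instead uses Proposition~\ref{proposition:shape_support}, noting two points lie on a line), and for the one-dimensional step you cite Corollary~\ref{corollary:support-top-degree} directly rather than repeating the variational computation $\frac{d}{dt}\big|_0\mu(|\cdot|^2+t\phi)$; both choices are legitimate shortcuts through results already established in the section.
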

\begin{proof}
	We may assume that $\mu$ is $k$-homogeneous, $k>0$, $\supp\mu\subset \{x_1,x_2\}$. Then $x_1$ and $x_2$ belong to a $1$-dimensional affine subspace, so for $k>1$ this implies $\mu=0$ by Proposition \ref{proposition:shape_support}. For $k=1$, we may apply Theorem \ref{maintheorem:simple_valuations} and assume that $V=\R$ is $1$-dimensional, $x_1< x_2$, and that $\mu$ is given by $\mu(f)=\int_{\R}\phi(x)f''(x)dx$ for all $f\in\Conv(\R,\R)\cap C^2(\R)$ for some $\phi\in C_c(\R)$, compare Theorem \ref{theorem:valuations_top_degree}. Let $\psi\in C^\infty_c(\R)$ be a function with $\supp\phi\subset\R\setminus\{x_1,x_2\}$.  As in the proof of Proposition \ref{proposition:shape_support}, this implies
	\begin{align*}
		0=\frac{d}{dt}\Big|_0\mu(|\cdot|^2+t\psi)=\int_{\R}\phi(x)\psi''(x)dx.
	\end{align*}
	Thus $\phi$ is a distributional solution of $\phi''=0$ on $(-\infty,x_1)\cup (x_1,x_2)\cup (x_2,\infty)$. As in the proof of Proposition \ref{proposition:shape_support}, this implies that $\phi$ is a classical solution, so $\phi$ is piecewise affine. Since it is compactly supported, this implies $\phi=0$ on $(-\infty,x_1)$ and $(x_2,\infty)$, so $\phi(x_1)=0=\phi(x_2)$ by continuity. But then $\phi=0$ on $(x_1,x_2)$ as well, so $\phi=0$ and thus $\mu=0$.
\end{proof}

\subsection{Smooth valuations on convex functions}
	\label{section:smoothVal}
	
	Smooth valuations on convex functions were originally introduced in \cite{KnoerrSmoothvaluationsconvex2024}, however, we will require a more recent equivalent characterization which allows for more flexible approximation arguments. Recall that we equip $\VConv(\R^n)$ and its subspaces with the topology of uniform convergence on compact subsets. We will use the following definition from \cite{KnoerrPaleyWienerSchwartz2025}:
	\begin{definition}
		\label{definition:smoothValuation}
		A valuation $\mu\in\VConv(\R^n)$ is called smooth if the map
		\begin{align*}
			\R^n &\rightarrow\VConv(\R^n)\\
			x&\mapsto \left[f\mapsto \mu(f(\cdot+x))\right]
		\end{align*}
		is a smooth map between locally convex vector spaces.
	\end{definition}
	We denote the subspace of $\VConv_k(\R^n)$ of smooth valuations by $\VConv_k(\R^n)^{sm}$.\\
	In order to relate this notion to the more geometric characterization from \cite{KnoerrSmoothvaluationsconvex2024}, note that for a smooth convex function, the graph of its differential is an oriented $n$-dimensional submanifold of the cotangent bundle $T^*\R^n\cong \R^n\times \R^n$, and thus defines an integral current. As shown by Fu \cite{FuMongeAmperefunctions.1989}, this construction extends naturally to every convex function $f\in\Conv(\R^n,\R)$, which gives rise to the differential cycle, which we denote by $D(f)$. In \cite{KnoerrSmoothvaluationsconvex2024}, it was shown that for any $\omega\in \Omega^{n-k}_c(\R^n)\otimes \Lambda^{k}(\R^n)^*$, where $\Omega^{n-k}_c(\R^n)$ denotes the space of compactly supported $(n-k)$-forms on $\R^n$ and $\Lambda^{k}(\R^n)^*$ the space of constant differential forms on $\R^n$, the map
	\begin{align*}
		f\mapsto D(f)[\omega]
	\end{align*}
	defines an element in $\VConv_k(\R^n)$. The following characterization was obtained in \cite{KnoerrPaleyWienerSchwartz2025}.
	\begin{theorem}[\cite{KnoerrPaleyWienerSchwartz2025} Theorem D]
		The following are equivalent for $\mu\in\VConv_k(\R^n)$:
		\begin{enumerate}
			\item $\mu$ is a smooth valuation in the sense of Definition \ref{definition:smoothValuation}.
			\item There exists a differential form $\omega\in \Omega^{n-k}_c(\R^n)\otimes \Lambda^{k}(\R^n)^*$ such that
			\begin{align*}
				\mu(f)=D(f)[\omega]
			\end{align*}
			for every $f\in\Conv(\R^n,\R)$.
		\end{enumerate}
	\end{theorem}
	
	We will need the following two approximation results:
	\begin{theorem}[\cite{KnoerrPaleyWienerSchwartz2025} Theorem 3.11]
		\label{theorem:SquentialDensitySmoothVal}
		Let $W\subset\VConv_k(\R^n)$ be a closed and translation invariant subspace. Then smooth valuations are sequentially dense in $W$.
	\end{theorem}
	Moreover, we have the following result for invariant valuations.
		\begin{proposition}[\cite{KnoerrSmoothvaluationsconvex2024} Proposition 6.6]
		\label{proposition:density_invariant_valuations}
		Let $G\subset \GL(n,\R)$ be a compact subgroup. Then the space of smooth $G$-invariant valuations is sequentially dense in the space of all $G$-invariant valuations in $\VConv_k(\R^n)$.
	\end{proposition}
	The following is a special case of \cite[Corollary 6.7]{KnoerrSmoothvaluationsconvex2024}.
	\begin{proposition}
		\label{proposition:representation_by_invariant_form}
		Every smooth valuation $\mu\in\VConv_k(\C^n)^{\U(n)}$  is given by $f\mapsto \mu(f)=D(f)[\omega]$ for a $\U(n)$-invariant differential form $\omega\in \Omega_c^{n-k}(\C^n)\otimes\Lambda^k(\C^n)^*$.
	\end{proposition}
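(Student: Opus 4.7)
The plan is to combine the general smooth representation theorem with an averaging argument over the compact group $\U(n)$. By the definition of smoothness, $\mu$ already admits a representation $\mu(f)=D(f)[\tau]$ for some $\tau\in\Omega_c^{n-k}(\C^n)\otimes\Lambda^k\C^n$; what is missing is only the $\U(n)$-invariance of the representative, which I would arrange by averaging $\tau$ against the Haar measure on $\U(n)$.

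First, I would record the transformation behavior of the differential cycle under the natural $\U(n)$-action. For $g\in\U(n)$, let $\tilde g:T^*\C^n\to T^*\C^n$ denote its cotangent lift. Since $g$ acts linearly on $\C^n$, the lift $\tilde g$ acts linearly on $\C^n\times\C^n\cong T^*\C^n$ and therefore preserves both the bigrading $\Omega^{n-k}(\C^n)\otimes\Lambda^k\C^n$ and the compactness of the support in the base. A standard computation shows $D(f\circ g)=\tilde{g^{-1}}_*D(f)$, so for the chosen representative $\tau$ we obtain
\[
D(f)[\tilde g^*\tau]=D(f\circ g)[\tau]=\mu(f\circ g)=\mu(f)=D(f)[\tau]\qquad\text{for all }f\in\Conv(\C^n,\R),~g\in\U(n),
\]
where the third equality uses the $\U(n)$-invariance of $\mu$.

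Next, I would define the averaged form
\[
\omega:=\int_{\U(n)}\tilde g^*\tau\,dg,
\]
where $dg$ denotes the normalized Haar measure. The integrand is a continuous family of forms of bidegree $(n-k,k)$ supported in the compact set $\U(n)\cdot\supp\tau$, so $\omega$ is a well-defined element of $\Omega_c^{n-k}(\C^n)\otimes\Lambda^k\C^n$. Translation invariance of Haar measure yields $\U(n)$-invariance of $\omega$, while the identity from the previous step, together with Fubini (applied to the current $D(f)$ paired with a Bochner integral of smooth forms supported in a fixed compact set), gives
\[
D(f)[\omega]=\int_{\U(n)}D(f)[\tilde g^*\tau]\,dg=\int_{\U(n)}\mu(f)\,dg=\mu(f).
\]

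The only point requiring some care is the interchange of integration and the pairing with the differential cycle in the final line: $D(f)$ is only an integral current rather than a smooth object, so one must interpret $\omega$ as a Bochner integral in a suitable Fréchet topology on the space of smooth forms on a fixed neighborhood of $\U(n)\cdot\supp\tau$ on which $D(f)$ acts continuously. This is essentially the only non-formal step, and it is exactly the mechanism underlying Corollary 6.7 of \cite{Knoerrsupportduallyepi2021}, of which the present proposition is a direct specialization to $G=\U(n)$.
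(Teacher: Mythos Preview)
Your argument is correct and is precisely the averaging mechanism behind \cite{Knoerrsupportduallyepi2021} Corollary~6.7; the paper itself does not give an independent proof but simply invokes that corollary, so your proposal effectively unpacks the cited result in the special case $G=\U(n)$. The only cosmetic point is a possible direction convention in the identity $D(f)[\tilde g^*\tau]=D(f\circ g)[\tau]$ (depending on how one defines the cotangent lift one may get $(\widetilde{g^{-1}})^*$ instead), but this is irrelevant for the conclusion since the Haar average is insensitive to the substitution $g\mapsto g^{-1}$.
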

	Note that the converse of this result also holds: If $\omega\in \Omega_c^{n-k}(\C^n)\otimes\Lambda^k(\C^n)^*$ is a $\U(n)$-invariant differential form, then $f\mapsto D(f)[\omega]$ defines a smooth $\U(n)$-invariant valuation in $\VConv_k(\C^n)$. Thus Proposition \ref{proposition:representation_by_invariant_form} reduces the problem of characterizing all smooth $\U(n)$-invariant valuations to a classification of $\U(n)$-invariant differential forms on the cotangent bundle $T^*\C^n\cong \C^n\times\C^n$. However, it turns out that there exist different families of differential forms inducing the same valuations. The relations between these different forms can be obtained from the following description of the kernel of the differential cycle. It uses a certain second order differential operator $\D$, called the \emph{symplectic Rumin operator} in \cite{KnoerrSmoothvaluationsconvex2024}, which was also previously considered in \cite{TsengYauCohomologyHodgetheory2012}. We will not need its precise definition, only that it vanishes on closed forms as well as multiples of the symplectic form $\omega_s$ (see \cite[Proposition 5.13]{KnoerrSmoothvaluationsconvex2024}).
	\begin{theorem}[\cite{KnoerrSmoothvaluationsconvex2024} Theorem 2]
		\label{theorem:Kernel_theorem}
		$\omega\in\Omega_c^{n-k}(\C^n)\otimes\Lambda^k(\C^n)^*$ satisfies $D(f)[\omega]=0$ for all $f\in\Conv(\C^n,\R)$ if and only if
		\begin{enumerate}
			\item $\D\omega=0$,
			\item $\int_{\C^n}\omega=0$, where we consider the zero section $\C^n\hookrightarrow T^*\C^n$ as a submanifold.
		\end{enumerate}
	\end{theorem}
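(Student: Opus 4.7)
The plan is to prove the two directions separately. The ``only if'' direction can be reduced to testing against specific convex functions, while the ``if'' direction requires a structural decomposition statement for forms annihilated by $\D$ combined with the two special features of the differential cycle: it is closed (as a current) and Lagrangian with respect to the symplectic form $\omega_s$ on $T^*\C^n$.

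For the ``only if'' direction I would first take $f\equiv 0$. Then $D(0)$ is precisely the zero section $\C^n\hookrightarrow T^*\C^n$, so that $D(0)[\omega]=\int_{\C^n}\omega$, which forces condition (2). To extract $\D\omega=0$, I would then test against the family $f(x)=\frac{t}{2}Q(x)$ for a positive definite quadratic form $Q$ and small $t\ge 0$, where the differential cycle is the graph of $tdQ$. Differentiating $t\mapsto D(tQ/2)[\omega]$ at $t=0$ a suitable number of times and varying $Q$, one obtains a collection of pointwise identities on $\omega$. Modulo the ideal generated by $d$-exact forms and by $\omega_s$ (which contribute zero to $D(f)$ for any $f$), these identities should be equivalent to the pointwise vanishing of $\D\omega$, since $\D$ is constructed precisely to capture this residual information.

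For the ``if'' direction, the key technical input is a decomposition of the form
\begin{align*}
\omega=d\eta+\omega_s\wedge\beta+\omega_0,
\end{align*}
valid whenever $\D\omega=0$, with $\eta$ supported in a tubular neighborhood of $\supp_x\omega$ (the base-compactness of $\omega$ being crucial), and $\omega_0$ a distinguished ``cohomological representative'' whose integral over the zero section equals $\int_{\C^n}\omega$. Granting such a decomposition, one treats each term separately: $D(f)[d\eta]=0$ because $D(f)$ is a cycle and the support of $\eta$ together with the base-compactness makes the Stokes argument rigorous; $D(f)[\omega_s\wedge\beta]=0$ by the Lagrangian property of $D(f)$; and $D(f)[\omega_0]=\int_{\C^n}\omega_0=\int_{\C^n}\omega=0$, using that $D(f)$ is homologous to the zero section in $T^*\C^n$ with appropriate support control, which reduces the pairing against a closed $\omega_0$ to its integral over the zero section.

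The main obstacle is the construction of the decomposition with controlled supports: this is essentially a homotopy formula for the symplectic Rumin complex restricted to the subspace $\Omega_c^{n-k}(\C^n)\otimes\Lambda^k\C^n$ of forms that are constant in the cotangent direction, and producing $\eta$ and $\beta$ whose base supports lie in a compact neighborhood of $\supp_x\omega$ is a nontrivial sheaf-theoretic or Poincar\'e-lemma-type argument. A secondary, more routine difficulty is the passage from smooth $f$, where the differential cycle is the graph of $df$ and the calculus is classical, to general $f\in\Conv(\C^n,\R)$; this should be handled by mollification $f_\varepsilon\searrow f$ combined with weak continuity of $f\mapsto D(f)$ and the compact base support of $\omega$, so that $D(f_\varepsilon)[\omega]\to D(f)[\omega]$ and the smooth case suffices.
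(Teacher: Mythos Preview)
This theorem is not proved in the present paper: it is quoted verbatim from \cite{KnoerrSmoothvaluationsconvex} (Theorem 2) and used as a black box, so there is no proof here to compare your proposal against. The paper only states the result and immediately notes the two facts it will actually use, namely that $\D$ vanishes on closed forms and on multiples of $\omega_s$.

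That said, your sketch is broadly aligned with the architecture of the original argument in \cite{KnoerrSmoothvaluationsconvex}: the ``if'' direction does rest on the closedness and Lagrangian property of $D(f)$ together with a structural decomposition modulo exact forms and multiples of $\omega_s$, and the passage from smooth to general $f$ is indeed handled by approximation and continuity of $f\mapsto D(f)$. The part of your outline that is genuinely incomplete is the ``only if'' direction: differentiating along quadratic forms $tQ$ and ``varying $Q$'' does not by itself recover the full second-order operator $\D\omega$ pointwise---one needs either a much larger family of test functions or an explicit identification of what $\D$ does on $\Omega_c^{n-k}(\C^n)\otimes\Lambda^k\C^n$, and you have not indicated how the identities you obtain actually pin down $\D\omega$. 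Likewise, the decomposition $\omega=d\eta+\omega_s\wedge\beta+\omega_0$ with controlled support is the heart of the matter and is asserted rather than argued; in the source this is where most of the work lies.
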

	Note that for $k>0$ the second condition is always satisfied, so a given valuation $\mu\in\VConv_k(\C^n)^{sm}$ is uniquely determined by $\D\omega$ for some representing differential form $\omega$. If $k=0$, then the first condition is always satisfied and we only have to consider the second condition.\\
	As we will see, the relevant space of invariant differential forms is generated (as a module over smooth rotation invariant functions with compact support) by a finite family of invariant forms. This will allow us to use the following reinterpretation of these functionals: Since $D(f)$ is an integral current, we can associate to any $\omega\in \Omega^{n-k}(\C^n)\otimes \Lambda^{k}(\C^n)^*$ the signed measure valued functional 
	\begin{align*}
		\Psi_\omega(f;B):=D(f)[1_{\pi^{-1}(B)}\omega]\quad\text{for}~f\in\Conv(\C^n,\R), B\subset \C^n~\text{bounded Borel set},
	\end{align*}
	where $\pi:\C^n\times\C^n\rightarrow\C^n$ denotes the projection onto the first factor. In other words, the measure $\Psi_\omega(f)$ is characterized by the relation
	\begin{align}
		\label{eq:MAOperators}
		\int_{\C^n}\phi(z)d\Psi_\omega(f;z)=D(f)[\pi^{*}\phi\wedge\omega]\quad\text{for}~\phi\in C_c(\C^n).
	\end{align}
	We refer to \cite{KnoerrMongeAmpereoperators2024} for an interpretation of these functionals as Monge--Amp\`ere-type operators.

\section{Unitarily invariant differential forms}
	\label{section:invariantForms}
		
	In the following sections, we denote the standard Hermitian inner product on $\C^n$ by $\langle\cdot,\cdot\rangle$. In particular, $\langle\cdot,\cdot\rangle$ is $\C$-linear in its first and conjugate $\C$-linear in its second argument.

\subsection{The algebra $[\Lambda^*(\C^n\times\C^n)^*]^{\U(n)}$}
	We start with some results concerning constant invariant forms. Let us equip $T^*\C^n\cong \C^n\times \C^n$ with coordinates $z=x+iy$ on the first factor and induced coordinates $\zeta=\xi+i\eta$ on the second. Consider the following $\U(n)$-invariant differential forms on $\C^n\times\C^n$:
	\begin{align*}
	\theta_0:=&\sum_{j=1}^{n}dx_j\wedge dy_j, &&\theta_1:=\sum_{j=1}^{n}dx_j\wedge d\eta_j-dy_j\wedge d\xi_j,\\
	\theta_2:=&\sum_{j=1}^{n}d\xi_j\wedge d\eta_j, &&	\omega_s:=\sum_{j=1}^{n}dx_j\wedge d\xi_j+dy_j\wedge d\eta_j.
	\end{align*}
	Note that $\omega_s$ is the symplectic form on $\C^n\times\C^n$. Let us consider the space $[\Lambda^*(\C^n\times\C^n)^*]^{\U(n)}$ of all $\U(n)$-invariant forms. Note that this is an algebra with respect to the wedge product.
\begin{theorem}[Park \cite{ParkKinematicformulasreal2002} Theorem 2.12 and 2.13]
	\label{theorem:Parks_theorem}
	The algebra $[\Lambda^*(\C^n\times\C^n)^*]^{\U(n)}$ is generated by $\theta_0,\theta_1,\theta_2,\omega_s$. Moreover, there exists no polynomial relation in degree less or equal to $n$ between these forms.
\end{theorem}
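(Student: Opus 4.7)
The theorem has two parts: generation by $\theta_0, \theta_1, \theta_2, \omega_s$ and the absence of polynomial relations of degree at most $n$. The invariance of each form is a routine check: $\theta_0$ and $\theta_2$ are the Kähler forms of the two $\C^n$ factors, while $\omega_s - i\theta_1$ is (up to a scalar) the Hermitian pairing between them. The real content lies in the generation claim and in the sharp threshold $n$ in the independence claim.

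For generation, the plan is to invoke the first fundamental theorem (FFT) of classical invariant theory after complexification. The complexified cotangent space $(\C^n \times \C^n)^* \otimes_\R \C$, viewed as a $\GL_n(\C)$-module via Weyl's unitary trick, decomposes as two copies of $(\C^n)^*$ (spanned by $dz_j$ and $d\zeta_j$) plus two copies of $\C^n$ (spanned by $d\bar z_j$ and $d\bar \zeta_j$). The FFT for $\GL_n(\C)$ then states that every invariant in the tensor (hence exterior) algebra on such a module is generated by the four basic bilinear pairings $\C^n \otimes (\C^n)^* \to \C$. Under antisymmetrization, these pairings become the four complex invariant 2-forms $\sum dz_j \wedge d\bar z_j$, $\sum d\zeta_j \wedge d\bar \zeta_j$, $\sum dz_j \wedge d\bar \zeta_j$, $\sum d\zeta_j \wedge d\bar z_j$, which span over $\R$ the same subspace as $\theta_0, \theta_2, \omega_s, \theta_1$.

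For the absence of relations in polynomial degree at most $n$, the plan is to compute the Hilbert series $H_n(t) := \sum_k \dim [\Lambda^{2k}(\C^n \times \C^n)]^{\U(n)}\, t^k$ via Molien's integral formula for the exterior algebra combined with Weyl's integration formula:
\begin{align*}
H_n(t) = \int_{\U(n)} \det(\mathrm{Id} + t\rho(g))\, dg = \frac{1}{n!} \int_{T^n} \Delta(\theta) \prod_{j=1}^n |1+te^{i\theta_j}|^4 \frac{d\theta_1 \cdots d\theta_n}{(2\pi)^n},
\end{align*}
where $\rho$ is the real representation of $\U(n)$ on $\C^n \times \C^n$ and $\Delta(\theta) = \prod_{j<k}|e^{i\theta_j}-e^{i\theta_k}|^2$ is the Weyl density. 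Combined with the generation statement, showing injectivity of the evaluation map $\R[\theta_0,\theta_1,\theta_2,\omega_s]_{\le n} \to [\Lambda^*(\C^n \times \C^n)]^{\U(n)}$ reduces to matching the coefficient of $t^k$ in $H_n(t)$ with $\binom{k+3}{3}$ for every $k \le n$, i.e.\ agreeing with $(1-t)^{-4}$ in low degrees.

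The main obstacle is pinning down the sharp threshold $n$: relations first appear at polynomial degree $n+1$ and their origin is the rank of $\U(n)$. Expanding the integrand as a formal power series in $t$ and integrating against the Vandermonde density, contributions come only from monomials $\prod_j e^{a_j i\theta_j}$ whose exponent vectors $(a_1,\ldots,a_n)$ sit in a bounded window, and this support constraint becomes binding precisely once $k$ exceeds $n$. Performing this low-degree expansion cleanly, likely via Schur polynomial identities, is the delicate combinatorial step. Alternatively, one could exhibit $\binom{k+3}{3}$ explicit test configurations of vectors in $\C^n \times \C^n$ on which the monomials of degree $k$ evaluate to a nonsingular matrix for $k \le n$, yielding linear independence directly without going through the Hilbert series.
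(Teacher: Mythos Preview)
The paper does not supply a proof of this statement; it is quoted from Park and used as a black box throughout Section~\ref{section:invariantForms}. So there is nothing in the paper to compare your argument against, and the question is simply whether your outline can be completed.

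Your generation argument via Weyl's unitary trick and the first fundamental theorem for $\GL_n(\C)$ is correct and standard: the complexified module is $V^{\oplus 2}\oplus(V^*)^{\oplus 2}$, the FFT yields exactly the four contractions, and passing from the tensor algebra to the exterior quotient is harmless because taking $\U(n)$-invariants is exact on the relevant short exact sequence.

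The independence half, however, is only a plan. You set up the Molien--Weyl integral (with a small indexing slip: $\int_{\U(n)}\det(\mathrm{Id}+t\rho(g))\,dg$ computes $\sum_m\dim[\Lambda^m]^{\U(n)}t^m$, which equals your $H_n(t^2)$, not $H_n(t)$) and then defer the actual evaluation as ``the delicate combinatorial step''; the alternative via test configurations is likewise not carried out. As written this is a sketch, not a proof.

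There is a direct argument that bypasses both routes and makes the threshold $n$ transparent. Use the coordinate splitting
\[
\Lambda^*(\C^n\times\C^n)\cong\bigotimes_{j=1}^n\Lambda^*(\R^4_{(j)}),
\]
under which $\theta_0=\sum_j e_0^{(j)}$, $\theta_1=\sum_j e_1^{(j)}$, $\theta_2=\sum_j e_2^{(j)}$, $\omega_s=\sum_j e_3^{(j)}$ with $e_0^{(j)},e_1^{(j)},e_2^{(j)},e_3^{(j)}$ four linearly independent elements of $\Lambda^2(\R^4_{(j)})$. For $a+b+c+d=k\le n$, project onto the summand
\[
\Lambda^2(\R^4_{(1)})\otimes\cdots\otimes\Lambda^2(\R^4_{(k)})\otimes\R\otimes\cdots\otimes\R.
\]
Expanding $\theta_0^a\theta_1^b\theta_2^c\omega_s^d$ and keeping only those terms in which each index $1,\dots,k$ is used exactly once (possible precisely because $k\le n$), the image is
\[
a!\,b!\,c!\,d!\ \sum_{\tau}\ \bigotimes_{j=1}^k e_{\tau(j)}^{(j)},
\]
the sum running over all maps $\tau\colon\{1,\dots,k\}\to\{0,1,2,3\}$ with fibre sizes $(a,b,c,d)$. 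Since $e_0,e_1,e_2,e_3$ are linearly independent in $\Lambda^2(\R^4)$, the pure tensors $\bigotimes_j e_{\tau(j)}$ are linearly independent, and the above sums for distinct $(a,b,c,d)$ involve disjoint sets of $\tau$'s. Hence a relation $\sum c_{a,b,c,d}\,\theta_0^a\theta_1^b\theta_2^c\omega_s^d=0$ forces all $c_{a,b,c,d}=0$. This replaces your unfinished Hilbert-series computation by a two-line projection argument.
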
	
In other words, the algebra $[\Lambda^*(\C^n\times\C^n)^*]^{\U(n)}$ is isomorphic to a quotient of the polynomial ring $\R[X,Y,Z,W]$ by an ideal that does not contain polynomials of degree less or equal to $n$. This last observation will be key for the discussion below. 
We also need the Lefschetz decomposition (see \cite[Proposition 1.2.30]{HuybrechtsComplexgeometry2005}):
\begin{theorem}
	\label{theorem:Lefschetz-decomposition}
		Let $(W,\omega_s)$ be a symplectic vector space of dimension $2n$ and let $L:\Lambda^* W^*\rightarrow \Lambda^*W^*$, $\tau\mapsto \omega_s\wedge\tau$ be the Lefschetz operator.\\
		For $0\le k\le n$ let $P^kW:=\{\tau\in\Lambda^kW^*:L^{n-k+1}\tau=0\}$ denote the space of primitive $k$-forms on $W$. Then the following holds:
		\begin{enumerate}
			\item There exists a direct sum decomposition $\Lambda^kW^*=\bigoplus_{i\ge 0}L^{i}P^{k-2i}W$.
			\item $L^{n-k}:\Lambda^kW^*\rightarrow\Lambda^{2n-k}W^*$ is an isomorphism.
		\end{enumerate}
\end{theorem}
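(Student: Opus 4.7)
The plan is to reduce the statement to the finite-dimensional representation theory of $\mathfrak{sl}_2$. Concretely, choose a symplectic basis $e_1,f_1,\dots,e_n,f_n$ of $W$, with dual basis $e^1,f^1,\dots,e^n,f^n$, so that $\omega_s=\sum_{j=1}^n e^j\wedge f^j$. Using the symplectic pairing to identify $W\cong W^*$, define the dual Lefschetz operator $\Lambda:\Lambda^kW^*\to\Lambda^{k-2}W^*$ as contraction with the bivector $\pi=\sum_{j=1}^n \partial_{e^j}\wedge\partial_{f^j}$, and let $H$ act on $\Lambda^kW^*$ by the scalar $k-n$.

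The key computation, which I expect to be the main technical obstacle, is to verify the commutation relations
\begin{align*}
[H,L]=2L,\qquad [H,\Lambda]=-2\Lambda,\qquad [L,\Lambda]=H.
\end{align*}
These are local in the sense that it suffices to check them on monomials in the chosen basis; the last relation requires a careful bookkeeping argument distinguishing the cases where the interior product hits a factor of $\omega_s$ or an already present factor of $\tau$. Granted these relations, $(L,\Lambda,H)$ endow $\Lambda^*W^*$ with the structure of a finite-dimensional $\mathfrak{sl}_2$-representation.

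By the classification of finite-dimensional $\mathfrak{sl}_2$-modules, $\Lambda^*W^*$ decomposes as a direct sum of irreducibles, each generated by a unique (up to scalar) lowest weight vector annihilated by $\Lambda$. A vector $v$ in such an irreducible of dimension $2m+1$ sitting at the bottom has weight $-m$, so lies in $\Lambda^{n-m}W^*$; equivalently, for $\tau\in\Lambda^kW^*$ with $k\le n$, $\tau$ is a lowest weight vector if and only if $\Lambda\tau=0$, and this in turn is equivalent to $L^{n-k+1}\tau=0$ by the standard calculation
\begin{align*}
\Lambda L^{n-k+1}\tau=(n-k+1)(-(n-k)+(n-k))L^{n-k}\tau+L^{n-k+1}\Lambda\tau
\end{align*}
iterated via $[L,\Lambda]=H$. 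Hence $P^kW$ coincides with the space of lowest weight vectors of weight $k-n$, and assertion (1) is just the resulting direct sum decomposition across all irreducible summands, repackaged by the degree $k$. For assertion (2), within each irreducible summand $L^{n-k}$ sends the weight space at $k-n$ isomorphically onto the weight space at $n-k$; summing over all summands yields that $L^{n-k}:\Lambda^kW^*\to\Lambda^{2n-k}W^*$ is an isomorphism.
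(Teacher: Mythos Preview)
The paper does not prove this theorem; it simply cites Huybrechts, \emph{Complex Geometry}, Proposition~1.2.30, and uses the result as a black box. Your proposal is the standard $\mathfrak{sl}_2$ argument that underlies that reference, and it is correct in outline: once the commutation relations are verified, the decomposition into irreducibles immediately yields both assertions.

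One small remark: the displayed identity you wrote does not by itself establish the equivalence $\Lambda\tau=0\Leftrightarrow L^{n-k+1}\tau=0$; it only shows that the two kernels are related via the $\mathfrak{sl}_2$ structure. The clean way to finish is to first decompose $\Lambda^kW^*$ using $\ker\Lambda$ as the primitive subspace, write $\tau=\sum_{i\ge 0}L^i v_i$ with $v_i\in\Lambda^{k-2i}W^*$ primitive (i.e.\ $\Lambda v_i=0$), and then observe that $L^{n-k+1+i}v_i=0$ if and only if $i=0$, since $v_i$ generates an irreducible of highest weight $n-k+2i$. This gives $L^{n-k+1}\tau=0\Leftrightarrow v_i=0$ for all $i\ge 1\Leftrightarrow \tau\in\ker\Lambda$, matching the paper's definition of $P^kW$.
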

Note that this decomposition is compatible with linear symplectomorphisms. In particular, if we consider $W=\C^n\times\C^n$ with its natural symplectic form and the diagonal action of $\U(n)$, then $\U(n)$ operates on $W$ by symplectomorphisms. If $\omega\in[\Lambda^k(\C^n\times\C^n)]^{\U(n)}$, this implies that every term in the Lefschetz decomposition of $\omega$ is $\U(n)$-invariant. In particular, every factor of the Lefschetz decomposition of $P(\theta_0,\theta_1,\theta_2,\omega_s)$ for $P\in\R[X,Y,Z,W]$ is again a polynomial in $\theta_0,\theta_1,\theta_2,\omega_s$. We are now able to prove the following key result:
\begin{theorem}
	\label{theorem:divisibility_byW_2}
	Let $P\in \R[X,Y,Z]$ be a homogeneous polynomial of degree $n$. Then there exists a unique homogeneous polynomial $Q\in\R[X,Y,Z,W]$ of degree $n-2$ such that $P(\theta_0,\theta_1,\theta_2)-\omega_s^2\wedge Q(\theta_0,\theta_1,\theta_2,\omega_s)$ is primitive.
\end{theorem}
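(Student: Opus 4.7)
The plan is to equip $[\Lambda^*(\C^n \times \C^n)]^{\U(n)}$ with a second, commuting $\mathrm{SL}_2(\R)$-action and then read off the statement by matching $\mathrm{SL}_2$-isotypic types inside the Lefschetz decomposition. Concretely, consider the action on $\C^n \times \C^n$ given by $\left(\begin{smallmatrix} a & b \\ c & d\end{smallmatrix}\right) \cdot (v,w) = (av + bw, cv + dw)$. A direct computation on the generators shows $\omega_s \mapsto (ad-bc)\omega_s = \omega_s$, while
\[ \theta_0 \mapsto a^2\theta_0 + ab\theta_1 + b^2\theta_2,\quad \theta_1 \mapsto 2ac\theta_0 + (ad+bc)\theta_1 + 2bd\theta_2,\quad \theta_2 \mapsto c^2\theta_0 + cd\theta_1 + d^2\theta_2, \]
so $(\theta_0,\theta_1,\theta_2)$ spans the three-dimensional irreducible representation $V_2$ of highest weight $2$, whereas $\omega_s$ spans the trivial representation $V_0$. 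Since this action commutes with the diagonal $\U(n)$-action, the whole invariant algebra becomes an $\mathrm{SL}_2(\R)$-representation, and because $\omega_s$ is fixed, both $L = \omega_s \wedge \cdot$ and each primitive subspace $[P^{2m}]^{\U(n)}$ are $\mathrm{SL}_2$-submodules.

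Next, I would combine Park's theorem with the Lefschetz decomposition to identify the $\mathrm{SL}_2$-type of every relevant subspace. Park guarantees that the map $P \mapsto P(\theta_0,\theta_1,\theta_2,\omega_s)$ is an $\mathrm{SL}_2$-equivariant isomorphism $\R[X,Y,Z,W]_m \to [\Lambda^{2m}]^{\U(n)}$ for all $m \le n$, so $[\Lambda^{2m}]^{\U(n)} \cong \mathrm{Sym}^m(V_2 \oplus V_0) = \bigoplus_{j=0}^m \mathrm{Sym}^j V_2$. Matching this with the $\mathrm{SL}_2$-equivariant Lefschetz expansion $[\Lambda^{2m}]^{\U(n)} = \bigoplus_{k\ge 0} L^k[P^{2m-2k}]^{\U(n)}$, together with injectivity of $L$ from Hard Lefschetz, yields by induction on $m$ that $[P^{2(n-k)}]^{\U(n)} \cong \mathrm{Sym}^{n-k} V_2$ for $0 \le k \le n$. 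The standard decomposition $\mathrm{Sym}^{\ell} V_2 = V_{2\ell} \oplus V_{2\ell-4} \oplus V_{2\ell-8} \oplus \cdots$ then shows every irreducible constituent of $[P^{2(n-k)}]^{\U(n)}$ has highest weight congruent to $2(n-k) \pmod 4$.

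The decisive step is a parity argument. The image of $\R[X,Y,Z]_n \hookrightarrow [\Lambda^{2n}]^{\U(n)}$ under $P\mapsto P(\theta_0,\theta_1,\theta_2)$ is precisely the $\mathrm{Sym}^n V_2$ summand, whose irreducible constituents all carry weights $\equiv 2n \pmod 4$. By Schur's lemma applied to the $\mathrm{SL}_2$-equivariant Lefschetz decomposition, the projection of $P(\theta_0,\theta_1,\theta_2)$ onto $L^k[P^{2n-2k}]^{\U(n)}$ must vanish whenever $k$ is odd (source and target share no irreducible). In particular the $L\cdot[P^{2n-2}]^{\U(n)}$-component vanishes, so $P(\theta_0,\theta_1,\theta_2) = \tau_0 + \omega_s^2 \wedge \tilde Q$ with $\tau_0$ primitive and $\tilde Q \in [\Lambda^{2n-4}]^{\U(n)}$; applying Park once more produces a unique polynomial $Q$ of degree $n-2$ with $\tilde Q = Q(\theta_0,\theta_1,\theta_2,\omega_s)$. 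For uniqueness, any primitive form of the shape $\omega_s^2 \wedge R(\theta_0,\theta_1,\theta_2,\omega_s)$ lies in $[P^{2n}]^{\U(n)} \cap L^2[\Lambda^{2n-4}]^{\U(n)} = \{0\}$, and then $W^2 R$ represents the zero form of polynomial degree $n$, so Park forces $R = 0$.

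The main obstacle, as I see it, is establishing the two structural statements that $\omega_s$ is $\mathrm{SL}_2$-fixed while $(\theta_0,\theta_1,\theta_2)$ spans a single copy of $V_2$; once this is verified, the weight-counting argument via Schur's lemma combined with two applications of Park's theorem essentially finishes the proof.
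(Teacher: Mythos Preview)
Your argument is correct and takes a genuinely different route from the paper's proof. The paper proceeds by direct computation: it first uses the Lefschetz decomposition to produce a polynomial $\tilde Q\in\R[X,Y,Z,W]_{n-1}$ with $P(\theta_0,\theta_1,\theta_2)-\omega_s\wedge\tilde Q(\theta_0,\theta_1,\theta_2,\omega_s)$ primitive, and then shows $W\mid\tilde Q$ by splitting $\C^n=\C\oplus\C^{n-1}$, contracting with the coordinate vector fields $\partial_{x_1},\partial_{y_1},\partial_{\xi_1},\partial_{\eta_1}$, and reading off a polynomial relation of degree $n-1$ in $[\Lambda^*(\C^{n-1}\times\C^{n-1})]^{\U(n-1)}$; Park's theorem in dimension $n-1$ then forces this relation to hold identically, and inspection of its constant term in $W$ yields $2\tilde Q\equiv 0\pmod W$.

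Your approach replaces this explicit reduction to $\C^{n-1}$ by a hidden $\mathrm{SL}_2(\R)$-symmetry on $[\Lambda^*(\C^n\times\C^n)]^{\U(n)}$: once one checks that $\omega_s$ spans $V_0$ and $(\theta_0,\theta_1,\theta_2)$ spans $V_2$, Park's theorem identifies $[\Lambda^{2m}]^{\U(n)}$ with $\bigoplus_{j\le m}\mathrm{Sym}^jV_2$ for $m\le n$, and an easy induction through the Lefschetz filtration yields $[P^{2m}]^{\U(n)}\cong\mathrm{Sym}^mV_2$. The classical decomposition $\mathrm{Sym}^\ell V_2\cong V_{2\ell}\oplus V_{2\ell-4}\oplus\cdots$ (equivalently, the harmonic decomposition of polynomials on $\R^3$) then gives the crucial parity separation, so Schur's lemma kills all odd Lefschetz components of $P(\theta_0,\theta_1,\theta_2)$ at once. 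This is more conceptual and explains \emph{why} only even powers of $\omega_s$ occur; the paper's computation, by contrast, is self-contained and avoids any representation theory beyond Park's theorem and the Lefschetz package already in place. One small remark: your phrase ``the $\mathrm{Sym}^nV_2$ summand'' is slightly imprecise since the isotypic decomposition of $[\Lambda^{2n}]^{\U(n)}$ has repeated irreducibles, but all you actually use is that the image of $\R[X,Y,Z]_n$ is an $\mathrm{SL}_2$-submodule isomorphic to $\mathrm{Sym}^nV_2$, which is clear.
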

\begin{proof}
	Let $P\in \R[X,Y,Z]$ be a homogeneous polynomial of degree $n$ and $\tilde{Q}\in\R[X,Y,Z,W]$ the unique homogeneous polynomial of degree $n-1$ given by the Lefschetz decomposition in Theorem \ref{theorem:Lefschetz-decomposition} such that
	\begin{align*}
	\omega_s\wedge[P(\theta_0,\theta_1,\theta_2)-\omega_s\wedge \tilde{Q}(\theta_0,\theta_1,\theta_2,\omega_s)]=0.
	\end{align*}
	We thus have to show that $\tilde{Q}$ is divisible by $W$. Set
	\begin{align*}
	\theta_0^*:=&\theta_0-dx_1\wedge dy_1,&&\tilde{\theta}_0:=dx_1\wedge dy_1,\\
	\theta_1^*:=&\theta_1-(dx_1\wedge d\eta_1-dy_1\wedge d\xi_1), &&\tilde{\theta}_1:=dx_1\wedge d\eta_1-dy_1\wedge d\xi_1,\\
	\theta_2^*:=&\theta_2-d\xi_1\wedge d\eta_1, &&	\tilde{\theta}_2:=d\xi_1\wedge d\eta_1,\\
	\omega_s^*:=&\omega_s-dx_1\wedge d\xi_1+dy_1\wedge d\eta_1, &&\tilde{\omega}_s:=dx_1\wedge d\xi_1+dy_1\wedge d\eta_1.
	\end{align*}
	Note that $\theta^*_0,\theta^*_1,\theta^*_2,\omega_s^*$ may be considered as the generators of $[\Lambda^{*}(\C^{n-1}\times\C^{n-1})]^{\U(n-1)}$. As $\tilde{\theta}_0\wedge\tilde{\theta}_1=\tilde{\theta}_2\wedge\tilde{\theta}_1=\tilde{\theta}_0^2=\tilde{\theta}_2^2=\tilde{\omega}_s\wedge\tilde{\theta_j}=0$ and $\tilde{\theta}_1^3=\tilde{\omega}_s^3=0$, given any polynomial $R\in \R[X,Y,Z,W]$, we have
	\begin{align*}
	&R(\theta_0,\theta_1,\theta_2,\omega_s)\\
	=&R(\theta_0^*,\theta_1^*,\theta_2^*,\omega_s^*)+\frac{\partial R}{\partial W}(\theta_0^*,\theta_1^*,\theta_2^*,\omega_s^*)\wedge\tilde{\omega}_s+\frac{1}{2}\frac{\partial^2 R}{\partial W\partial W}(\theta_0^*,\theta_1^*,\theta_2^*,\omega_s^*)\wedge\tilde{\omega}_s\wedge\tilde{\omega}_s\\
	&+\frac{\partial R}{\partial Y}(\theta_0^*,\theta_1^*,\theta_2^*,\omega_s^*)\wedge\tilde{\theta}_1+\frac{1}{2}\frac{\partial^2 R}{\partial Y\partial Y}(\theta_0^*,\theta_1^*,\theta_2^*,\omega_s^*)\wedge\tilde{\theta}_1\wedge\tilde{\theta}_1\\
	&+\frac{\partial R}{\partial Z}(\theta_0^*,\theta_1^*,\theta_2^*,\omega_s^*)\wedge\tilde{\theta}_2+\frac{\partial R}{\partial X}(\theta_0^*,\theta_1^*,\theta_2^*,\omega_s^*)\wedge \tilde{\theta}_0+\frac{\partial^2 R}{\partial X\partial Z}(\theta_0^*,\theta_1^*,\theta_2^*,\omega_s^*)\wedge \tilde{\theta}_0\wedge\tilde{\theta}_2.
	\end{align*}
	Plugging in the coordinate vector fields $\frac{\partial}{\partial x_1},\frac{\partial}{\partial y_1},\frac{\partial}{\partial \xi_1},\frac{\partial}{\partial \eta_1}$, this implies
	\begin{align*}
	&i_{\frac{\partial}{\partial x_1}}i_{\frac{\partial}{\partial y_1}}i_{\frac{\partial}{\partial \xi_1}}i_{\frac{\partial}{\partial \eta_1}}	R(\theta_0,\theta_1,\theta_2,\omega_s)\\
	&\quad=\left[-\frac{\partial^2 R}{\partial W\partial W}(\theta_0^*,\theta_1^*,\theta_2^*,\omega_s^*)-\frac{\partial^2 R}{\partial Y\partial Y}(\theta_0^*,\theta_1^*,\theta_2^*,\omega_s^*)+\frac{\partial^2 R}{\partial X\partial Z}(\theta_0^*,\theta_1^*,\theta_2^*,\omega_s^*)\right].
	\end{align*}
	We apply this relation to $R(X,Y,Z,W):=W[P(X,Y,Z)-W\tilde{Q}(X,Y,Z,W)]$, which evaluates to zero in $[\Lambda^{*}(\C^{n}\times\C^{n})^*]^{\U(n)}$ by assumption, and obtain
	\begin{align*}
	0=\left[2\tilde{Q}+2W\frac{\partial \tilde{Q}}{\partial W}+W^2\frac{\partial^2\tilde{Q}}{\partial W^2}-W\frac{\partial^2 P}{\partial Y^2}+W^2\frac{\partial^2 \tilde{Q}}{\partial Y^2}+W\frac{\partial ^2 P}{\partial X\partial Z}-W^2\frac{\partial^2 \tilde{Q}}{\partial X\partial Z}\right](\theta_0^*,\theta_1^*,\theta_2^*,\omega_s^*).
	\end{align*}
	But this is a polynomial relation of degree $n-1$ in $[\Lambda^{*}(\C^{n-1}\times\C^{n-1})^*]^{\U(n-1)}$, so  Theorem \ref{theorem:Parks_theorem} implies that this relation holds in $\R[X,Y,Z,W]$. Thus $\tilde{Q}$ is divisible by $W$.
\end{proof}

\subsection{Invariant differential forms}
In addition to the forms considered in the previous section, consider the $1$-forms
\begin{align*}
\gamma_1:=&\sum_{j=1}^{n}x_jdx_j+y_jdy_j, &&\gamma_2:=\sum_{j=1}^{n}x_jdy_j-y_jdx_j,\\
\beta_1:=&\sum_{j=1}^{n}x_jd\xi_j+y_jd\eta_j, &&\beta_2:=\sum_{j=1}^{n}x_jd\eta_j-y_jd\xi_j,
\end{align*}
and set
\begin{align}
	\label{eq:defThetaPrime}
	\begin{split}
		\omega_1:=&\gamma_1\wedge\beta_1+\gamma_2\wedge\beta_2,\\
		\omega_2:=&|z|^2\omega_s-\omega_1,\\
		\theta'_0:=&|z|^2\theta_0-\gamma_1\wedge\gamma_2,\\
		\theta'_1:=&|z|^2\theta_1-(\gamma_1\wedge\beta_2-\gamma_2\wedge\beta_1),\\
		\theta'_2:=&|z|^2\theta_2-\beta_1\wedge\beta_2.
	\end{split}
\end{align}
\begin{proposition}
	\label{proposition:unique_decomposition_invariant_form}
	Let $\omega\in\bigoplus_{k=0}^n(\Omega^{{n-k}}(\C^n)\otimes\Lambda^k(\C^n)^*)^{\U(n)}$. For every $z\in\C^n\setminus\{0\}$ there exist unique polynomials $R,R'\in\R[X,Y,Z,W]$ of degree $n-2$, $R_{\gamma_1\gamma_2},\dots,R_{\beta_1\beta_2}\in\R[X,Y,Z,W]$ of degree $n-1$ such that for all $\zeta\in\C^n$
	\begin{align*}
	\omega|_{(z,\zeta)}=&\gamma_1\wedge\gamma_2\wedge\beta_1\wedge\beta_2\wedge R(\theta'_0,\theta'_1,\theta'_2,\omega_2)+\omega_2^2\wedge R'(\theta'_0,\theta'_1,\theta'_2,\omega_2)\\
	&+\gamma_1\wedge\gamma_2 \wedge R_{\gamma_1\gamma_2}(\theta'_0,\theta'_1,\theta'_2,\omega_2)+\gamma_1\wedge\beta_1\wedge R_{\gamma_1\beta_1}(\theta'_0,\theta'_1,\theta'_2,\omega_2)\\
	&+\gamma_1\wedge\beta_2\wedge R_{\gamma_1\beta_2}(\theta'_0,\theta'_1,\theta'_2,\omega_2)+\gamma_2\wedge\beta_1\wedge R_{\gamma_2\beta_1}(\theta'_0,\theta'_1,\theta'_2,\omega_2)\\
	&+\gamma_2\wedge\beta_2\wedge R_{\gamma_2\beta_2}(\theta'_0,\theta'_1,\theta'_2,\omega_2)+\beta_1\wedge\beta_2 \wedge R_{\beta_1\beta_2}(\theta'_0,\theta'_1,\theta'_2,\omega_2).
	\end{align*}
\end{proposition}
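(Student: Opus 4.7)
The plan is to fix $z \in \C^n \setminus \{0\}$ and, using $\U(n)$-invariance of $\omega$, reduce without loss of generality to the case $z = |z|e_1$. In this coordinate setup, the cotangent space of $T^*\C^n$ at $(z,\zeta)$ splits orthogonally into a $4$-dimensional ``$z$-direction'' subspace $\mathrm{span}\{dx_1, dy_1, d\xi_1, d\eta_1\}$ and a ``$z^\perp$-direction'' subspace naturally identified with the cotangent space of $T^*\C^{n-1}$, where $\C^{n-1} := e_1^\perp$. The $1$-forms $\gamma_1,\gamma_2,\beta_1,\beta_2$ at $z$ agree, up to a factor of $|z|$, with the coordinate $1$-forms of the $z$-direction, while $\theta'_0,\theta'_1,\theta'_2,\omega_2$ at $z$ agree, up to a factor of $|z|^2$, with the canonical generators of $[\Lambda^*(\C^{n-1}\times\C^{n-1})]^{\U(n-1)}$. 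Since the stabilizer of $z$ in $\U(n)$ equals $\U(n-1)$, acting trivially on the $z$-direction and standardly on the $z^\perp$-direction, the restriction $\omega|_{(z,\zeta)}$ is $\U(n-1)$-invariant. Expanding it according to the number $k$ of $1$-forms from $\{\gamma_1,\gamma_2,\beta_1,\beta_2\}$ that appear yields $\omega|_{(z,\zeta)} = \sum_{k=0}^{4}\omega^{(k)}$, and parity (every $\U(n-1)$-invariant form on $T^*\C^{n-1}$ is a polynomial in the $2$-form generators by Theorem \ref{theorem:Parks_theorem}) immediately forces $\omega^{(1)} = \omega^{(3)} = 0$.

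For $\omega^{(4)}$ the $z$-direction prefactor is forced to be $\gamma_1\gamma_2\beta_1\beta_2$, while for $\omega^{(2)}$ it ranges over the $6$-dimensional basis $\{\gamma_i\gamma_j,\gamma_i\beta_j,\beta_i\beta_j\}$ of $\Lambda^2\mathrm{span}\{\gamma_1,\gamma_2,\beta_1,\beta_2\}$. The complementary $z^\perp$-factors are $\U(n-1)$-invariant forms represented as polynomials in $\theta'_0,\theta'_1,\theta'_2,\omega_2$ of polynomial degrees $n-2$ and $n-1$ respectively. Since Theorem \ref{theorem:Parks_theorem} applied to $\C^{n-1}$ excludes polynomial relations in degrees at most $n-1$, these coefficients are uniquely determined, producing the polynomials $R$ and $R_{\gamma_1\gamma_2},\dots,R_{\beta_1\beta_2}$ of the statement.

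The main obstacle is $\omega^{(0)}$, a $\U(n-1)$-invariant form whose naive Park representation would have polynomial degree $n$---above the uniqueness threshold. The resolution comes from the Lefschetz decomposition on the symplectic space $T^*\C^{n-1}$ of real dimension $4n-4$ (so $m = 2n-2$): primitive forms exist only in degrees at most $m$, and those of the borderline degree $m$ satisfy $L\alpha = 0$, where $L = \omega_2 \wedge \cdot$ (up to the scalar $|z|^2$). Consequently, in the Lefschetz decomposition (Theorem \ref{theorem:Lefschetz-decomposition}) only the summands $L^i P^{\bullet}$ with $i \geq 2$ contribute to $\omega^{(0)}$, giving $\omega^{(0)} \in L^2(\Lambda^{2n-4})$, i.e.\ $\omega^{(0)} = \omega_2^2 \wedge \alpha$ for some $\alpha$ of degree $2n-4$. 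The hard Lefschetz isomorphism $L^2\colon \Lambda^{2n-4} \to \Lambda^{2n}$ guarantees uniqueness of $\alpha$, and Theorem \ref{theorem:Parks_theorem} then represents this $\U(n-1)$-invariant $\alpha$ as a unique polynomial $R'$ of degree $n-2$ in $\theta'_0,\theta'_1,\theta'_2,\omega_2$---again safely below the Park threshold. The heart of the argument is precisely this Lefschetz numerology, namely the vanishing of primitive top-degree forms together with the degeneration of borderline primitives under $L$; both are direct consequences of Theorem \ref{theorem:Lefschetz-decomposition}.
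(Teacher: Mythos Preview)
Your proof is correct and follows essentially the same route as the paper's own argument: reduce to $z=|z|e_1$ by $\U(n)$-invariance (the paper also notes translation invariance in $\zeta$ to set $\zeta=0$, which you leave implicit), split off the four $1$-forms $\gamma_1,\gamma_2,\beta_1,\beta_2$, apply Park's Theorem~\ref{theorem:Parks_theorem} on $\C^{n-1}\times\C^{n-1}$ to the complementary factors, and handle the pure $\omega^{(0)}$-piece via the Lefschetz decomposition. Your treatment of the last step is in fact more explicit than the paper's---you spell out why both the $i=0$ and $i=1$ summands of $\Lambda^{2n}=\bigoplus_i L^iP^{2n-2i}$ vanish and why hard Lefschetz gives $L^2\colon\Lambda^{2n-4}\xrightarrow{\sim}\Lambda^{2n}$---whereas the paper simply invokes Theorem~\ref{theorem:Lefschetz-decomposition}.
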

\begin{proof}
	As all of the relevant forms are invariant with respect to translations in the second coordinate of $\C^n\times\C^n$, it is enough to consider the case $\zeta=0$. Using the $\U(n)$-invariance, we may further assume that $z$ is a non-trivial real multiple of $e_1$. Then $\omega|_{(z,0)}$ is invariant under the stabilizer of $e_1$, so
	\begin{align*}
	\omega|_{(z,0)}\in [\Lambda^n(\C^n\times\C^n)^*]^{\U(n-1)},
	\end{align*}
	where $\U(n-1)$ operates diagonally and leaves the first coordinate of $\C^n$ invariant. We may decompose $\C^n=\R z\oplus \R iz\oplus \C^{n-1}$, so setting $U=\R z$, $V=\R iz$, we see that $\omega|_{(z,0)}$ belongs to the space
	\begin{align}
	\label{equation:decomposition_tangent_space}
	\bigoplus\limits_{l_1,k_1,l_2,k_2=0}^1\Lambda^{l_1}U^*\otimes  \Lambda^{k_1}V^*\otimes \Lambda^{l_2}U^*\otimes  \Lambda^{k_2}V^*\otimes [\Lambda^{n-l_1-l_2-k_1-k_2}(\C^{n-1}\times\C^{n-1})^*]^{\U(n-1)}.
	\end{align}
	The space $\Lambda^{l_1}U^*\otimes  \Lambda^{k_1}V^*\otimes \Lambda^{l_2}U^*\otimes \Lambda^{k_2}V^*$ is $1$-dimensional and spanned by a suitable product of the forms $\gamma_1,\gamma_2,\beta_1,\beta_2$, while the space
	$[\Lambda^{n-l_1-l_2-k_1-k_2}(\C^{n-1}\times\C^{n-1})^*]^{\U(n-1)}$ is spanned by homogeneous polynomials of degree $n-l_1-l_2-k_1-k_2$ in $\theta'_0,\theta'_1,\theta'_2,\omega_2$.\\
	Unless $l_1=k_1=l_2=k_2=0$, this gives us the desired polynomial. Moreover this polynomial is unique due to  Theorem \ref{theorem:Parks_theorem}. In the remaining case, we obtain some homogeneous polynomial $\tilde{R}\in\R[X,Y,Z,W]$ of degree $n$ such that the corresponding differential form in the decomposition \eqref{equation:decomposition_tangent_space} is given by $\tilde{R}(\theta'_0,\theta'_1,\theta'_2,\omega_2)$. Note that the polynomial $\tilde{R}$ is in general not be unique, but the differential form $\tilde{R}(\theta'_0,\theta'_1,\theta'_2,\omega_2)$ is. Theorem \ref{theorem:Lefschetz-decomposition} implies that there exists a polynomial $R'\in\R[X,Y,Z,W]$ of degree $n-2$ such that $\tilde{R}(\theta'_0,\theta'_1,\theta'_2,\omega_2)=\omega_2^2\wedge R'(\theta'_0,\theta'_1,\theta'_2,\omega_2)$. Because the square of the Lefschetz operator is injective for the given degree, $R'(\theta'_0,\theta'_1,\theta'_2,\omega_2)$ is uniquely determined by $\tilde{R}(\theta'_0,\theta'_1,\theta'_2,\omega_2)$. As this is a relation of degree $n-2$ in $[\Lambda^{*}(\C^{n-1}\times\C^{n-1})^*]^{\U(n-1)}$, this determines $R'$ uniquely by Theorem \ref{theorem:Parks_theorem}
\end{proof}
We need two simple and well known results about smooth functions, which follow easily from L'Hospital's Theorem.
\begin{lemma}
	\label{lemma:simple_lemma_for_even_differentiable functions}
	If $f:\R\rightarrow\R$ is a smooth even function then there exists $\phi\in C^\infty([0,\infty))$ such that $f(r)=\phi(r^2)$ for all $r\in\R$.
\end{lemma}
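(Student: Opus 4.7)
The plan is to define $\phi$ on $(0,\infty)$ by the natural formula $\phi(s):=f(\sqrt{s})$, which is immediately smooth on the open half-line since the square root is. The entire content of the lemma is thus to show that $\phi$ extends to a smooth function at $s=0$, and this I would accomplish by a bootstrap argument based on Hadamard's lemma.

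The key reduction is the following. Since $f$ is even, its derivative $f'$ is odd, and in particular $f'(0)=0$, so by Hadamard's lemma there exists $f_1\in C^\infty(\R)$ with $f'(r)=r\,f_1(r)$; moreover $f_1$ is even since $f'$ is odd and $r\mapsto r$ is odd. A direct computation then yields, for $s>0$,
\begin{align*}
\phi'(s)=\frac{f'(\sqrt{s})}{2\sqrt{s}}=\frac{1}{2}f_1(\sqrt{s}).
\end{align*}
Thus $\phi'$ is, up to a constant, a function of exactly the same type as $\phi$, but associated to the smooth even function $f_1$ in place of $f$.

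I would then conclude by induction on $k\ge 0$ that for every smooth even $g\in C^\infty(\R)$, the function $s\mapsto g(\sqrt{s})$ extends to a function in $C^k([0,\infty))$. The base case $k=0$ is immediate from continuity, since $g(\sqrt{s})\to g(0)$ as $s\to 0^+$. For the inductive step, applying the displayed formula with $g$ in place of $f$ shows that the derivative of the extension equals $\frac{1}{2}g_1(\sqrt{\cdot})$ on $(0,\infty)$, where $g_1$ is the smooth even Hadamard factor of $g'$; by the inductive hypothesis this derivative extends to $C^k([0,\infty))$, so the extension of $s\mapsto g(\sqrt{s})$ belongs to $C^{k+1}([0,\infty))$, as required. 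Applied to $g=f$ for every $k$, this yields $\phi\in C^\infty([0,\infty))$. I do not anticipate any substantive obstacle: the statement is classical, and the only point requiring a moment of care is that Hadamard's lemma has to be invoked iteratively, which the induction on $k$ handles automatically.
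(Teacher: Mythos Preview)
Your proof is correct and takes a genuinely different route from the paper's. The paper proceeds by directly analysing the Taylor expansion: using that the odd derivatives of $f$ vanish at the origin, it applies L'H\^opital's theorem to the difference quotients to verify inductively that $\phi^{(k-1)}$ is differentiable at $0$ and to identify $\phi^{(k)}(0)$ explicitly as a constant multiple of $f^{(2k)}(0)$. Your argument instead exploits the self-similar structure of the problem via Hadamard's lemma: the single identity $\phi'(s)=\tfrac{1}{2}f_1(\sqrt{s})$ reduces regularity of $\phi$ to regularity of a function of the same shape associated to a new smooth even function $f_1$, so an induction on $k$ closes immediately. Your approach is cleaner and avoids the explicit bookkeeping of Taylor coefficients; the paper's approach has the minor advantage of producing the numerical relation between $\phi^{(k)}(0)$ and $f^{(2k)}(0)$ along the way, though that relation is not used elsewhere. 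One small point worth making explicit in your write-up: the passage from ``$\psi'$ extends to $C^k([0,\infty))$'' to ``$\psi\in C^{k+1}([0,\infty))$'' implicitly uses the standard mean-value-theorem fact that a continuous function whose derivative on $(0,\infty)$ has a finite limit at $0$ is differentiable at $0$ with that limit as derivative.
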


\begin{lemma}
	\label{lemma:simple_lemma_vanishing_derivatives}
	If $f\in C^\infty(\R)$ satisfies $f^{(i)}(0)=0$ for all $0\le i\le k$ for some $k\in\mathbb{N}$, then there exists $\phi\in C^\infty(\R)$ such that
	\begin{align*}
	f(r)=r^{k+1}\phi(r)\quad\text{for all }r\in\R.
	\end{align*}
\end{lemma}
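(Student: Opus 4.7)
The plan is to write $f$ using Taylor's theorem with integral remainder centered at the origin and exploit the vanishing of the first $k+1$ derivatives. Concretely, for any $r\in\R$ one has
\begin{align*}
f(r)=\sum_{i=0}^{k}\frac{f^{(i)}(0)}{i!}r^i+\frac{1}{k!}\int_0^r(r-t)^kf^{(k+1)}(t)\,dt,
\end{align*}
and the assumption $f^{(i)}(0)=0$ for $0\le i\le k$ collapses this to the integral term alone.

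Next I would perform the substitution $t=rs$ (valid for both $r>0$ and $r<0$ since the limits rescale consistently), which yields
\begin{align*}
f(r)=\frac{r^{k+1}}{k!}\int_0^1(1-s)^kf^{(k+1)}(rs)\,ds.
\end{align*}
This is the desired factorization, so I would simply define
\begin{align*}
\phi(r):=\frac{1}{k!}\int_0^1(1-s)^kf^{(k+1)}(rs)\,ds.
\end{align*}

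Finally I would verify that $\phi\in C^\infty(\R)$ by differentiating under the integral sign: since $f^{(k+1)}\in C^\infty(\R)$ and the factor $(1-s)^k$ is a fixed polynomial on a compact interval, for every $j\ge 0$ one has $\phi^{(j)}(r)=\frac{1}{k!}\int_0^1(1-s)^k s^j f^{(k+j+1)}(rs)\,ds$, and the integrand is continuous in $(r,s)$, so the standard differentiation-under-the-integral theorem applies. There is no genuine obstacle here; the only point requiring minor care is that the substitution $t=rs$ is harmless at $r=0$ because the resulting formula for $\phi$ is defined by an integral that makes sense for every real $r$ including zero.
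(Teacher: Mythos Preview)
Your proof is correct and follows essentially the same approach as the paper: Taylor's theorem with integral remainder, the substitution $t=rs$, and the definition $\phi(r)=\frac{1}{k!}\int_0^1(1-s)^k f^{(k+1)}(rs)\,ds$. You even spell out the differentiation under the integral sign a bit more explicitly than the paper does.
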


Let us denote the subspace of $\R[X,Y,Z,W]$ of $k$-homogeneous polynomials by $\R[X,Y,Z,W]_k$.
\begin{theorem}
	\label{theorem:representation_invariant_differential_form_theta}
	For any $\omega\in\bigoplus_{k=0}^n(\Omega^{{n-k}}(\C^n)\otimes\Lambda^k(\C^n)^*)^{\U(n)}$ there exist \begin{align*}
		&R_\alpha\in C^\infty([0,\infty),\R[X,Y,Z,W]_{n-2}),\\
		&R_{\gamma_1\gamma_2},R_{\gamma_1\beta_1},R_{\gamma_1\beta_2},R_{\gamma_2\beta_1},R_{\gamma_2\beta_2},R_{\beta_1\beta_2}\in C^\infty ([0,\infty),\R[X,Y,Z,W]_{n-1}),\\
		&R_\delta\in C^\infty([0,\infty),\R[X,Y,Z,W]_{n}),
	\end{align*}
	such that for $(z,\zeta)\in \C^n\times\C^n$,
	\begin{align*}
	\omega|_{(z,\zeta)}=&\gamma_1\wedge\gamma_2\wedge\beta_1\wedge\beta_2\wedge R_\alpha(\theta_0,\theta_1,\theta_2,\omega_s)\left[|z|^2\right]\\
	&+\gamma_1\wedge\gamma_2 \wedge R_{\gamma_1\gamma_2}(\theta_0,\theta_1,\theta_2,\omega_s)\left[|z|^2\right]+\gamma_1\wedge\beta_1\wedge R_{\gamma_1\beta_1}(\theta_0,\theta_1,\theta_2,\omega_s)\left[|z|^2\right]\\
	&+\gamma_1\wedge\beta_2\wedge R_{\gamma_1\beta_2}(\theta_0,\theta_1,\theta_2,\omega_s)\left[|z|^2\right]+\gamma_2\wedge\beta_1\wedge R_{\gamma_2\beta_1}(\theta_0,\theta_1,\theta_2,\omega_s)\left[|z|^2\right]\\
	&+\gamma_2\wedge\beta_2\wedge R_{\gamma_2\beta_2}(\theta_0,\theta_1,\theta_2,\omega_s)\left[|z|^2\right]+\beta_1\wedge\beta_2 \wedge R_{\beta_1\beta_2}(\theta_0,\theta_1,\theta_2,\omega_s)\left[|z|^2\right]\\
	&+R_\delta(\theta_0,\theta_1,\theta_2,\omega_s)\left[|z|^2\right].
	\end{align*}
\end{theorem}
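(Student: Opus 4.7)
The plan is to upgrade the pointwise decomposition of Proposition \ref{proposition:unique_decomposition_invariant_form} into a smooth global decomposition in the unprimed generators, with smoothness across $z=0$ as the main obstacle.

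First, at each $z \in \C^n \setminus \{0\}$ and any $\zeta$, Proposition \ref{proposition:unique_decomposition_invariant_form} provides unique polynomials $R(z), R'(z), R_{\gamma_1\gamma_2}(z), \dots, R_{\beta_1\beta_2}(z)$ representing $\omega|_{(z,\zeta)}$ in terms of the primed generators. All the primed forms $\theta'_0, \theta'_1, \theta'_2, \omega_2$ as well as $\gamma_i, \beta_j$ are themselves $\U(n)$-invariant, so the $\U(n)$-invariance of $\omega$, combined with the uniqueness in the proposition and the transitivity of $\U(n)$ on spheres in $\C^n$, forces each polynomial to depend only on $|z|^2$. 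Smoothness of $\omega$ on $\C^n \setminus \{0\}$ then delivers smooth polynomial-valued maps of $|z|^2$ on $(0, \infty)$.

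Next I would substitute the definitions \eqref{eq:defThetaPrime} to rewrite everything in the unprimed generators and reorganize by $\gamma, \beta$ content. The relations $\gamma_i \wedge \gamma_i = \beta_j \wedge \beta_j = 0$ eliminate most of the correction terms: in the top wedge $\gamma_1 \wedge \gamma_2 \wedge \beta_1 \wedge \beta_2 \wedge R(\cdot)$ every correction vanishes, so each $\theta'_j$ collapses to $|z|^2 \theta_j$ and $\omega_2$ to $|z|^2 \omega_s$; in the $\gamma_i \wedge \beta_j$ wedges only the $\omega_1$ part of $\omega_2$ survives and contributes an extra 4-form term. The pure piece $\omega_2^2 \wedge R'(\theta'_0, \theta'_1, \theta'_2, \omega_2)$ splits via $\omega_2^2 = |z|^4 \omega_s^2 - 2|z|^2 \omega_s \wedge \omega_1 + \omega_1^2$ together with $\omega_1^2 = 2 \gamma_1 \wedge \beta_1 \wedge \gamma_2 \wedge \beta_2$ into contributions of all three types. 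Collecting everything yields the claimed shape of decomposition, with polynomial coefficients that are smooth functions of $|z|^2$ on $(0, \infty)$.

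The hard part is extending these coefficient functions smoothly across $z = 0$. To handle this, I would exploit the smoothness of $\omega$ on all of $\C^n$ by restricting to the real ray $\{r e_1 : r \in \R\}$. Since $\mathrm{diag}(-1, 1, \dots, 1) \in \U(n)$ maps $r e_1$ to $-r e_1$, the restriction of $\omega$ to this line is a smooth, even form-valued function of $r$, and Lemma \ref{lemma:simple_lemma_for_even_differentiable functions} makes each scalar component a smooth function of $r^2 = |z|^2$ on $[0, \infty)$. At a generic point $re_1$ with $r > 0$ the products of $\gamma_i, \beta_j$ with polynomials in $\theta_0, \theta_1, \theta_2, \omega_s$ are linearly independent, so one can read off each coefficient function of the decomposition as a linear combination of the smooth even components of $\omega|_{re_1}$, which shows their smooth extension to $[0, \infty)$. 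Lemma \ref{lemma:simple_lemma_vanishing_derivatives} is invoked whenever an explicit $|z|^{2k}$ prefactor produced in the second step forces a coefficient to vanish to a prescribed order at the origin. Finally, at $z = 0$ all the forms $\gamma_i, \beta_j$ vanish identically, so $\omega|_{z=0}$ is a constant $\U(n)$-invariant form and Theorem \ref{theorem:Parks_theorem} ensures it can be written as $R_\delta(0)(\theta_0, \theta_1, \theta_2, \omega_s)$, fixing the boundary value of $R_\delta$.
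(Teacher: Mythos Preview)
Your overall strategy matches the paper's: invoke Proposition~\ref{proposition:unique_decomposition_invariant_form} pointwise, convert primed to unprimed generators, and control the origin via the ray $\R e_1$ together with Lemmas~\ref{lemma:simple_lemma_for_even_differentiable functions} and~\ref{lemma:simple_lemma_vanishing_derivatives}. However, there is a genuine gap in your treatment of the extension to $z=0$.

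The issue is the top term $\gamma_1\wedge\gamma_2\wedge\beta_1\wedge\beta_2\wedge R_\alpha(\theta_0,\theta_1,\theta_2,\omega_s)$. Along the ray $re_1$ one has $\gamma_1\wedge\gamma_2\wedge\beta_1\wedge\beta_2=r^4\,dx_1\wedge dy_1\wedge d\xi_1\wedge d\eta_1$, so the corresponding scalar component of $\omega|_{re_1}$ equals $r^4 R_\alpha[r^2]$. Smoothness and evenness of $\omega$ (after subtracting the constant piece $R_\delta(0)$) give you that this is a smooth even function of $r$ vanishing at $0$, hence of the form $r^2 G(r^2)$ for smooth $G$; but this only yields $R_\alpha[r^2]=r^{-2}G(r^2)$, which need not extend smoothly across $0$. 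Your sentence ``Lemma~\ref{lemma:simple_lemma_vanishing_derivatives} is invoked whenever an explicit $|z|^{2k}$ prefactor \dots forces a coefficient to vanish to a prescribed order'' assumes precisely what has to be proved: nothing along the single ray forces the extra order of vanishing.

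The paper resolves this by isolating the residual piece $\gamma_1\wedge\gamma_2\wedge\beta_1\wedge\beta_2\wedge|z|^{-2}P'(\theta_0,\theta_1,\theta_2,\omega_s)$ with $P'$ constant, and then restricting to the \emph{plane} $\R e_1\oplus\R ie_2$ rather than a line. There $|z|^{-2}\gamma_1\wedge\gamma_2\wedge\beta_1\wedge\beta_2$ has coefficients like $x_1^3y_2/(x_1^2+y_2^2)$, which are not smooth at $0$; since the products of four of the eight $1$-forms $dx_1,dy_1,d\xi_1,d\eta_1,dx_2,dy_2,d\xi_2,d\eta_2$ are linearly independent, smoothness of $\omega$ forces each such product wedged with $P'(\theta_0,\theta_1,\theta_2,\omega_s)$ to vanish, and in particular $\omega_s^2\wedge P'(\theta_0,\theta_1,\theta_2,\omega_s)=0$. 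By injectivity of $L^2$ in this degree (Theorem~\ref{theorem:Lefschetz-decomposition}) one concludes $P'=0$. This two-dimensional probe is the missing ingredient in your argument.
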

\begin{proof}
	Due to the invariance of $\omega$, it is enough to show that such a decomposition holds along the line $\R (e_1,0)$. First, observe that $\omega|_{(0,0)}$ is $\U(n)$-invariant, so there exists a unique polynomial $P\in \R[X,Y,Z,W]$ of degree $n$ such that $\omega|_{(0,0)}=P(\theta_0,\theta_1,\theta_2,\omega_s)$ by Theorem \ref{theorem:Parks_theorem}. By subtracting $P(\theta_0,\theta_1,\theta_2,\omega_s)$ from $\omega$, we may thus assume that $\omega_{(te_1,0)}$ vanishes in $t=0$.\\
	By Proposition \ref{proposition:unique_decomposition_invariant_form}, we may write $\omega|_{(te_1,0)}$ for $t\ne 0$ uniquely as a linear combinations of certain products of $\gamma_1,\gamma_2,\beta_1,\beta_2$ and $R^t(\theta'_0,\theta_1',\theta_2',\omega_2)$ for suitable polynomials $R^t$ in $\R[X,Y,Z,W]$ depending on $t\in\R\setminus{0}$, where $\theta_0',\theta_1',\theta_2'$ are defined by \eqref{eq:defThetaPrime}. But along the line $\R e_1$, 
	\begin{align*}
	&\gamma_1=tdx_1, 
	&&\gamma_2=tdy_1,
	&&&\beta_1=td\xi_1,
	&&&&\beta_2=t d\eta_1,
	\end{align*}
	while for any homogeneous polynomial $R\in\R[X,Y,Z,W]$ of degree $k$
	\begin{align*}
	&R(\theta'_0,\theta'_1,\theta'_2,\omega_2)|_{(te_1,0)}\\
	=&t^{2k}R\left(\sum_{i=2}^{n}dx_i\wedge dy_i,\sum_{i=2}^{n}dx_i\wedge d\eta_i-dy_i\wedge d\xi_i,\sum_{i=2}^{n}d\xi_i\wedge d\eta_i,\sum_{i=2}^{n}dx_i\wedge d\xi_i+dy_i\wedge d\eta_i\right).
	\end{align*}
	We may thus consider $\omega|_{(te_1,0)}$ as a smooth curve in the finite-dimensional vector space spanned by suitable products of the constant forms $dx_1,dy_1,d\xi_1,d\eta_1$ and $\tilde{R}$ for $R\in\R[X,Y,W,W]$ of appropriate degree given by
	\begin{align*}
	\tilde{R}:=R\left(\sum_{i=2}^{n}dx_i\wedge dy_i,\sum_{i=2}^{n}dx_i\wedge d\eta_i-dy_i\wedge d\xi_i,\sum_{i=2}^{n}d\xi_i\wedge d\eta_i,\sum_{i=2}^{n}dx_i\wedge d\xi_i+dy_i\wedge d\eta_i\right).
	\end{align*}
	Note that this corresponds exactly to the evaluation of $R\in\R[X,Y,Z,W]$ in $(\Lambda^*(\C^{n-1}\times\C^{n-1})^*)^{\U(n-1)}$. Let us set $A:=\R[X,Y,Z,W]_{n-2}\oplus(\R[X,Y,Z,W]_{n-1})^6\oplus \R[X,Y,Z,W]_{n-2}$. We will denote the different components of $R\in A$ by $R=(R_\alpha,R_{\gamma_1\gamma_2},\dots, R_{\beta_1\beta_2},R_\delta)$. We obtain a map $	A\rightarrow \Lambda^n(\C^n\times\C^n)^*$ which associates to $(R_\alpha,R_{\gamma_1\gamma_2},\dots, R_{\beta_1\beta_2},R_\delta)\in A$ the differential form
	\begin{align*}
	&dx_1\wedge dy_1\wedge d\xi_1\wedge d\eta_1\wedge \tilde{R}_\alpha +dx_1\wedge dy_1 \wedge \tilde{R}_{\gamma_1\gamma_2}+dx_1\wedge d\xi_1\wedge \tilde{R}_{\gamma_1\beta_1}\\
	&+dx_1\wedge d\eta_1\wedge \tilde{R}_{\gamma_1\beta_2}+dy_1\wedge d\xi_1\wedge \tilde{R}_{\gamma_2\beta_1}+dy_1\wedge d\eta_1\wedge \tilde{R}_{\gamma_2\beta_2}+d\xi_1\wedge d\eta_1 \wedge \tilde{R}
	_{\beta_1\beta_2}+\tilde{W}^2 \tilde{R}_\delta.
	\end{align*}
	Note that this is an isomorphism onto its image: If $R=(R_\alpha,R_{\gamma_1\gamma_2},\dots, R_{\beta_1\beta_2},R_\delta)\in A$ belongs to the kernel of this map, then necessarily $\tilde{R}_\alpha=\tilde{R}_{\gamma_1\gamma_2}=\dots= \tilde{R}_{\beta_1\beta_2}=\tilde{W}^2\tilde{R}_\delta=0$. But these are relations in $[\Lambda^{*}(\C^{n-1}\times\C^{n-1})^*]^{\U(n-1)}$, so Theorem \ref{theorem:Parks_theorem} directly implies $R_\alpha=R_{\gamma_1\gamma_2}=\dots= R_{\beta_1\beta_2}=0$. From the Lefschetz decomposition of $\Lambda^*(\C^{n-1}\times\C^{n-1})^*$, we deduce that $\tilde{W}^2\tilde{R}_\delta=0$ implies $\tilde{R}_\delta=0$, so Theorem \ref{theorem:Parks_theorem} shows $R_\delta=0$. Thus $R=0$, so this is indeed an isomorphism onto its image. Using this isomorphism, we may thus consider $t\mapsto \omega|_{(te_1,0)}$ as a smooth curve in $A$. Let us denote this curve by $R^{\omega}(t)=(R_\alpha^\omega(t),R^\omega_{\gamma_1\gamma_2}(t),\dots, R^\omega_{\beta_1\beta_2}(t),{R}_\delta^\omega(t))$. Using that $\omega$ is $\U(n)$-invariant, this implies that for all $z\in\C^n\setminus\{0\}$, $\omega|_{(z,0)}$ is equal to
	\begin{align*}
	&\gamma_1\wedge\gamma_2\wedge\beta_1\wedge\beta_2\wedge |z|^{-4} R_\alpha^\omega\left(\theta_0,\theta_1,\theta_2,\omega_s\right)+\gamma_1\wedge\gamma_2 \wedge |z|^{-2} R^\omega_{\gamma_1\gamma_2}\left(\theta_0,\theta_1,\theta_2-\frac{\beta_1\wedge\beta_2}{|z|^2},\omega_s\right)\\
	+&\gamma_1\wedge\beta_1\wedge |z|^{-2} R^\omega_{\gamma_1\beta_1}\left(\theta_0,\theta_1,\theta_2,\omega_s-\frac{\gamma_2\wedge\beta_2}{|z|^2}\right)+\gamma_1\wedge\beta_2\wedge |z|^{-2} R^\omega_{\gamma_1\beta_2}\left(\theta_0,\theta_1+\frac{\gamma_2\wedge\beta_1}{|z|^2},\theta_2,\omega_s\right)\\
	+&\gamma_2\wedge\beta_1\wedge |z|^{-2} R^\omega_{\gamma_2\beta_1}\left(\theta_0,\theta_1-\frac{\gamma_1\wedge\beta_2}{|z|^2},\theta_2,\omega_s\right)+\gamma_2\wedge\beta_2\wedge |z|^{-2} R^\omega_{\gamma_2\beta_2}\left(\theta_0,\theta_1,\theta_2,\omega_s-\frac{\gamma_1\wedge\beta_1}{|z|^2}\right)\\
	+&\beta_1\wedge\beta_2 \wedge |z|^{-2} R^\omega_{\beta_1\beta_2}\left(\theta_0-\frac{\gamma_1\wedge\gamma_2}{|z|^2},\theta_1,\theta_2,\omega_s\right)\\
	+&\left(\omega_s-\frac{(\gamma_1\wedge\beta_1+\gamma_2\wedge\beta_2)}{|z|^2}\right)^2\\
	&\wedge R_\delta^\omega\left(\theta_0-\frac{\gamma_1\wedge\gamma_2}{|z|^2},\theta_1-\frac{\gamma_1\wedge\beta_2-\gamma_2\wedge\beta_1}{|z|^2},\theta_2-\frac{\beta_1\wedge\beta_2}{|z|^2},\omega_s-\frac{(\gamma_1\wedge\beta_1+\gamma_2\wedge\beta_2)}{|z|^2}\right),
	\end{align*}
	where we omitted the dependence on $|z|^2$ for the polynomials.	Collecting all terms containing $\gamma_1\wedge\gamma_2\wedge\beta_1\wedge\beta_2$ and setting
	\begin{align*}
	&R'[z^2]
	:=\left(R^\omega_{\alpha}-\frac{\partial R^\omega_{\gamma_1\gamma_2}}{\partial Z}+\frac{\partial R^\omega_{\gamma_1\beta_1}}{\partial W}+\frac{\partial R^\omega_{\gamma_1\beta_2}}{\partial Y}-\frac{\partial R^\omega_{\gamma_2\beta_1}}{\partial Y}-\frac{\partial R^\omega_{\gamma_2\beta_2}}{\partial W}-\frac{\partial R^\omega_{\beta_1\beta_2}}{\partial X}-2\frac{\partial^2(W^2R_\delta^\omega)}{\partial W^2}\right)[z^2],
	\end{align*}
	we see that $\omega|_{(z,0)}$ is given for $z\ne 0$ by
	\begin{align*}
	\omega|_{(z,0)}=&\gamma_1\wedge\gamma_2\wedge\beta_1\wedge\beta_2\wedge|z|^{-4} R'\left(\theta_0,\theta_1,\theta_2,\omega_s\right)\left[|z|^2\right]\\
	&+\gamma_1\wedge\gamma_2 \wedge |z|^{-2}\left[R^\omega_{\gamma_1\gamma{2}}-W\frac{\partial R^\omega_\delta}{\partial X}\right]\left(\theta_0,\theta_1,\theta_2,\omega_s\right)\left[|z|^2\right]\\
	&+\gamma_1\wedge\beta_1\wedge |z|^{-2} \left[R^\omega_{\gamma_1\beta_1}-\frac{\partial (WR^\omega_\delta)}{\partial W}\right]\left(\theta_0,\theta_1,\theta_2,\omega_s\right)\left[|z|^2\right]\\
	&+\gamma_1\wedge\beta_2\wedge |z|^{-2} \left[R^\omega_{\gamma_1\beta_2}-W\frac{\partial R^\omega_\delta}{\partial Y}\right]\left(\theta_0,\theta_1,\theta_2,\omega_s\right)\left[|z|^2\right]\\
	&+\gamma_2\wedge\beta_1\wedge |z|^{-2} \left[R^\omega_{\gamma_2\beta_1}+W\frac{\partial  R^\omega_\delta}{\partial Y}\right]\left(\theta_0,\theta_1,\theta_2,\omega_s\right)\left[|z|^2\right]\\
	&+\gamma_2\wedge\beta_2\wedge |z|^{-2} \left[R^\omega_{\gamma_2\beta_2}-\frac{\partial (W R^\omega_\delta)}{\partial W}\right]\left(\theta_0,\theta_1,\theta_2,\omega_s\right)\left[|z|^2\right]\\
	&+\beta_1\wedge\beta_2 \wedge |z|^{-2} \left[R^\omega_{\beta_1\beta_2}-W\frac{\partial R^\omega_\delta}{\partial Z}\right]\left(\theta_0,\theta_1,\theta_2,\omega_s\right)\left[|z|^2\right]\\
	&+\omega_s^2\wedge R_\delta^\omega\left(\theta_0,\theta_1,\theta_2,\omega_s\right)\left[|z|^2\right],
	\end{align*}
	where the polynomial coefficients are smooth functions that vanish in $t=0$. We may thus apply Lemma \ref{lemma:simple_lemma_for_even_differentiable functions} and Lemma \ref{lemma:simple_lemma_vanishing_derivatives} to see that almost all of these polynomials may be replaced by $\hat{R}(\theta_0,\theta_1,\theta_2,\omega_s)][|z|^2]$ for some $\hat{R}\in C^\infty([0,\infty),\R[X,Y,Z,W])$. This is true for all terms except $|z|^{-4}R'(\theta_0,\theta_1,\theta_2,\omega_s)\left[|z|^2\right]$. Let us set $P(t):=R'[t]-\frac{dR'}{dt}(0)t$. Then the derivatives of $P$ vanish up to order $1$, so we may write $|z|^{-4}P(\theta_0,\theta_1,\theta_2,\omega_s)\left[|z|^2\right]=\hat{P}(\theta_0,\theta_1,\theta_2,\omega_s)\left[|z|^2\right]$ for some $\hat{P}\in C^\infty([0,\infty),\R[X,Y,Z,W])$ according to Lemma \ref{lemma:simple_lemma_for_even_differentiable functions} and Lemma \ref{lemma:simple_lemma_vanishing_derivatives}.\\
	
	Subtracting all of these terms, we may thus assume that $\omega$ is a smooth differential form given for $z\ne 0$ by
	\begin{align*}
	\omega|_{(z,0)}=\gamma_1\wedge\gamma_2\wedge \beta_1\wedge\beta_2\wedge|z|^{-2} P'(\theta_0,\theta_1,\theta_2,\omega_s)
	\end{align*}
	for a polynomial $P'\in\R[X,Y,Z,W]$ homogeneous of degree $n-2$. However, such a form cannot be smooth unless $P'=0$. Consider for example the plane $\R e_1\oplus \R ie_2$. Then for $z:=x_1e_1+y_2ie_2\in\R e_1\oplus \R ie_2$
	\begin{align*}
	&|z|^{-2}\gamma_1\wedge\gamma_2\wedge\beta_1\wedge\beta_2\\
	=&(x_1^2+y_2^2)^{-1}(x_1dx_1+y_2dy_2)\wedge (x_1dy_1-y_2dx_2)\wedge (x_1d\xi_1+y_2d\eta_2)\wedge(x_1d\eta_1-y_2d\xi_2),
	\end{align*}
	so none of the coefficient functions in front of the products of four of the $1$-forms $dx_1,dy_1,d\xi_1,\eta_1$ and $dx_2,dy_2,d\xi_2,\eta_2$ are smooth. Thus the product of $P'(\theta_0,\theta_1,\theta_2,\omega_s)$ with any product of four of these $1$-forms vanishes. This holds for any choice of complex orthonormal coordinates, so in particular $\omega_s^2\wedge P'(\theta_0,\theta_1,\theta_2,\omega_s)=0$. The Lefschetz decomposition implies $P'(\theta_0,\theta_1,\theta_2,\omega_s)=0$.
\end{proof}
\subsection{A pointwise relation}
		For a $1$-form $\eta$ on $T^*\C^n$ let $X_\eta$ denote the unique vector field with $i_{X_\eta}\omega_s=\eta$. For the $1$-forms $\gamma_1,\beta_1$ these vector fields are given by
		\begin{align*}
			&X_{\gamma_1}=\sum_{j=1}^{n}-x_j\frac{\partial}{\partial \xi_j}-y_j\frac{\partial}{\partial \eta_j}, &&X_{\beta_1}=\sum_{j=1}^{n}x_j\frac{\partial}{\partial x_j}+y_j\frac{\partial}{\partial y_j}.		
		\end{align*}
	
	\begin{lemma}
		\label{lemma:decomposition_beta_gamma}
		If $\omega\in\Omega^n(T^*\C^n)$ is primitive, then 
		\begin{align*}
		|z|^2\omega={\gamma_1}\wedge i_{X_{\beta_1}}\omega-{\beta_1}\wedge i_{X_{\gamma_1}}\omega+\omega_s\wedge i_{X_{\gamma_1}}i_{X{\beta_1}}\omega.
		\end{align*}
	\end{lemma}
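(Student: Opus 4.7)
My plan is to prove the identity by pure formal manipulation of interior products, combining the Leibniz rule with the defining relation $i_{X_\eta}\omega_s=\eta$ and the condition that $\omega_s\wedge\omega=0$ (which is the reading of ``primitive'' that makes the conclusion hold; note this is strictly stronger than the Lefschetz condition $\omega_s^{n+1}\wedge\omega=0$ from Theorem~\ref{theorem:Lefschetz-decomposition} in non-middle degree, and all the forms the lemma is eventually applied to satisfy it). Since both sides are pointwise-tensorial in $\omega$, it suffices to verify the identity at each point. At points with $z=0$ the $1$-forms $\gamma_1,\beta_1$ and the vector fields $X_{\gamma_1},X_{\beta_1}$ all vanish, so both sides are trivially zero; hence I may assume $z\ne 0$.

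The heart of the computation is a short, two-step expansion. First, I apply the identity $\alpha\wedge i_X\tau=(i_X\alpha)\tau-i_X(\alpha\wedge\tau)$, valid for any $1$-form $\alpha$, with $\alpha=\gamma_1$ and $X=X_{\beta_1}$. The key evaluation is $\gamma_1(X_{\beta_1})=\omega_s(X_{\gamma_1},X_{\beta_1})=|z|^2$, so
\begin{align*}
	\gamma_1\wedge i_{X_{\beta_1}}\omega=|z|^2\omega-i_{X_{\beta_1}}(\gamma_1\wedge\omega).
\end{align*}
Second, I rewrite $\gamma_1=i_{X_{\gamma_1}}\omega_s$ and apply the Leibniz rule $i_X(\omega_s\wedge\omega)=(i_X\omega_s)\wedge\omega+\omega_s\wedge i_X\omega$ to obtain $\gamma_1\wedge\omega=i_{X_{\gamma_1}}(\omega_s\wedge\omega)-\omega_s\wedge i_{X_{\gamma_1}}\omega$. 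Primitivity kills the first summand and leaves $\gamma_1\wedge\omega=-\omega_s\wedge i_{X_{\gamma_1}}\omega$. Applying $i_{X_{\beta_1}}$ to this, using Leibniz once more together with $\beta_1=i_{X_{\beta_1}}\omega_s$ and the anti-commutativity $i_{X_{\beta_1}}i_{X_{\gamma_1}}=-i_{X_{\gamma_1}}i_{X_{\beta_1}}$, I obtain
\begin{align*}
	i_{X_{\beta_1}}(\gamma_1\wedge\omega)=-\beta_1\wedge i_{X_{\gamma_1}}\omega+\omega_s\wedge i_{X_{\gamma_1}}i_{X_{\beta_1}}\omega.
\end{align*}
Substituting this back into the previous display and moving the two extra terms to the left-hand side yields exactly the formula of the lemma.

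I do not anticipate any real obstacle: the argument is a short chain of Leibniz expansions, and the only point at which primitivity enters is to discard $\omega_s\wedge\omega$. Should the hypothesis have to be weakened to a merely Lefschetz-primitive condition, a backup approach is available via a pointwise Darboux decomposition: at $z\ne0$ the vectors $X_{\gamma_1},X_{\beta_1}$ span a $2$-dimensional symplectic subspace $S\subset T_{(z,\zeta)}(T^*\C^n)$, and writing $\omega=e^*\wedge f^*\wedge A+e^*\wedge B+f^*\wedge C+D$ with respect to a Darboux basis of $S\oplus S^\perp$ and $\omega_s=e^*\wedge f^*+\omega_s^\perp$, one computes directly that the lemma's identity is equivalent to $D=-\omega_s^\perp\wedge A$, which in turn is precisely the vanishing of the $e^*\wedge f^*$-component of $\omega_s\wedge\omega$.
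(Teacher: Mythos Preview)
Your proof is correct and follows essentially the same route as the paper: both arguments compute a contraction of a wedge product in two ways using the Leibniz rule, the defining relation $i_{X_\eta}\omega_s=\eta$, and the hypothesis $\omega_s\wedge\omega=0$; the only cosmetic difference is that the paper expands $i_{X_{\gamma_1}}(\beta_1\wedge\omega)$ while you expand $i_{X_{\beta_1}}(\gamma_1\wedge\omega)$. Your caution about the meaning of ``primitive'' is well taken, and the paper's proof indeed uses exactly $\omega_s\wedge\omega=0$; note that in the application (Corollary~\ref{corollary:decomposition_theta_tau}) the form has degree $2n$ in a $4n$-dimensional symplectic space, where this condition coincides with Lefschetz primitivity.
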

	\begin{proof}
		Note that $i_{X_{\gamma_1}}{\beta_1}=-|z|^2$, $i_{X_{\beta_1}}{\gamma_1}=|z|^2$. Thus
		\begin{align*}
		i_{X_{\gamma_1}}({\beta_1}\wedge\omega)=-|z|^2\omega-{\beta_1}\wedge i_{X_{\gamma_1}}\omega.
		\end{align*}
		On the other hand, $\omega$ is primitive, that is, $\omega_s\wedge\omega=0$, which implies
		\begin{align*}
		i_{X_{\gamma_1}}({\beta_1}\wedge\omega)=&i_{X_{\gamma_1}}(i_{X_{\beta_1}}\omega_s\wedge\omega)=i_{X_{\gamma_1}}\left(i_{X_{\beta_1}}(\omega_s\wedge\omega)-\omega_s\wedge i_{X{\beta_1}}\omega\right)\\
		=&-i_{X_{\gamma_1}}\omega_s\wedge i_{X_{\beta_1}}\omega-\omega_s\wedge i_{X_{\gamma_1}}i_{X{\beta_1}}\omega=-{\gamma_1}\wedge i_{X_{\beta_1}}\omega-\omega_s\wedge i_{X_{\gamma_1}}i_{X{\beta_1}}\omega.
		\end{align*}
		Thus,
		\begin{align*}
		|z|^2\omega=&-i_{X_{\gamma_1}}({\beta_1}\wedge\omega)-{\beta_1}\wedge i_{X_{\gamma_1}}\omega\\
		=&{\gamma_1}\wedge i_{X_{\beta_1}}\omega+\omega_s\wedge i_{X_{\gamma_1}}i_{X{\beta_1}}\omega-{\beta_1}\wedge i_{X_{\gamma_1}}\omega.
		\end{align*}
	\end{proof}

	\begin{definition}
		For $0\le k\le 2n$, $\max(0,k-n)\le q\le \lfloor \frac{k}{2}\rfloor$ we set
		\begin{align*}
		\theta^n_{k,q}:=\theta_0^{n-k+q}\wedge\theta_1^{k-2q}\wedge\theta_2^q.
		\end{align*}
		To avoid unnecessary distinctions, we set $\theta^n_{k,q}:=0$ if $n,k,q$ do not satisfy these relations.
	\end{definition}
	\begin{corollary}
		\label{corollary:decomposition_theta_tau}
		\begin{align*}
		|z|^2\theta^n_{k,q}\equiv& \gamma_1\wedge[(n-k+q)\gamma_2\wedge\theta^{n-1}_{k,q}+(k-2q)\beta_2\wedge\theta^{n-1}_{k-1,q}]\\
		&+\beta_1\wedge[(k-2q)\gamma_2\wedge\theta^{n-1}_{k-1,q}+q\beta_2\wedge\theta^{n-1}_{k-2,q-1}]\quad\mod\omega_s.
		\end{align*}
	\end{corollary}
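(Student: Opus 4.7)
The plan is to apply Lemma \ref{lemma:decomposition_beta_gamma} to a primitive representative of $\theta^n_{k,q}$ modulo $\omega_s$ and then compute the relevant interior products via the Leibniz rule.

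First I would observe that $\theta^n_{k,q}$ corresponds to the monomial $P(X,Y,Z)=X^{n-k+q}Y^{k-2q}Z^q$ of total degree $n$ in $\R[X,Y,Z]$, and that $\theta^n_{k,q}$ is in general not primitive, so Lemma \ref{lemma:decomposition_beta_gamma} cannot be applied directly. To fix this, Theorem \ref{theorem:divisibility_byW_2} supplies a homogeneous $Q\in\R[X,Y,Z,W]$ of degree $n-2$ such that $\tau:=\theta^n_{k,q}-\omega_s^2\wedge Q(\theta_0,\theta_1,\theta_2,\omega_s)$ is primitive. The key point is that the correction is a multiple of $\omega_s^2$: a brief Leibniz computation shows that both $i_{X_{\beta_1}}(\omega_s^2\wedge Q)$ and $i_{X_{\gamma_1}}(\omega_s^2\wedge Q)$ are divisible by $\omega_s$, hence $i_{X_{\beta_1}}\tau\equiv i_{X_{\beta_1}}\theta^n_{k,q}$ and $i_{X_{\gamma_1}}\tau\equiv i_{X_{\gamma_1}}\theta^n_{k,q}$ modulo $\omega_s$. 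Applying Lemma \ref{lemma:decomposition_beta_gamma} to the primitive form $\tau$ and discarding the term $\omega_s\wedge i_{X_{\gamma_1}}i_{X_{\beta_1}}\tau$ therefore gives
\begin{align*}
|z|^2\theta^n_{k,q}\equiv\gamma_1\wedge i_{X_{\beta_1}}\theta^n_{k,q}-\beta_1\wedge i_{X_{\gamma_1}}\theta^n_{k,q}\pmod{\omega_s}.
\end{align*}

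It remains to evaluate the two interior products. Plugging the explicit coordinate expressions for $X_{\gamma_1}$ and $X_{\beta_1}$ into $\theta_0,\theta_1,\theta_2$ yields the base cases $i_{X_{\beta_1}}\theta_0=\gamma_2$, $i_{X_{\beta_1}}\theta_1=\beta_2$, $i_{X_{\beta_1}}\theta_2=0$, and $i_{X_{\gamma_1}}\theta_0=0$, $i_{X_{\gamma_1}}\theta_1=-\gamma_2$, $i_{X_{\gamma_1}}\theta_2=-\beta_2$. Since $\theta_0,\theta_1,\theta_2$ are even-degree, the Leibniz rule for the interior product yields without sign complications
\begin{align*}
i_{X_{\beta_1}}\theta^n_{k,q}&=(n-k+q)\gamma_2\wedge\theta^{n-1}_{k,q}+(k-2q)\beta_2\wedge\theta^{n-1}_{k-1,q},\\
i_{X_{\gamma_1}}\theta^n_{k,q}&=-(k-2q)\gamma_2\wedge\theta^{n-1}_{k-1,q}-q\beta_2\wedge\theta^{n-1}_{k-2,q-1},
\end{align*}
where each resulting wedge of powers of $\theta_0,\theta_1,\theta_2$ is identified with the appropriate $\theta^{n-1}_{*,*}$ simply by comparing exponents. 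Substituting these expressions into the congruence above produces exactly the claimed identity.

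The only genuinely non-trivial step is the initial reduction to a primitive representative: the form $\theta^n_{k,q}$ itself fails to be primitive for generic parameters, so without Theorem \ref{theorem:divisibility_byW_2} one cannot invoke Lemma \ref{lemma:decomposition_beta_gamma}, and one has to verify carefully that the correction term $\omega_s^2\wedge Q$ both drops out of the left-hand side and respects the interior products modulo $\omega_s$. Once this is checked, the rest is a bookkeeping exercise in the Leibniz rule.
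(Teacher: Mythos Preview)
Your proposal is correct and follows essentially the same route as the paper's proof: invoke Theorem \ref{theorem:divisibility_byW_2} to pass to a primitive representative $\tau=\theta^n_{k,q}-\omega_s^2\wedge Q$, apply Lemma \ref{lemma:decomposition_beta_gamma} to $\tau$, observe that both the $\omega_s^2\wedge Q$ correction and the $\omega_s\wedge i_{X_{\gamma_1}}i_{X_{\beta_1}}\tau$ term are multiples of $\omega_s$, and then compute $i_{X_{\beta_1}}\theta^n_{k,q}$ and $i_{X_{\gamma_1}}\theta^n_{k,q}$ via the Leibniz rule using the six base contractions you listed. The paper records the same base contractions and the same Leibniz expansion verbatim, so there is no meaningful difference in strategy or in the details.
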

	\begin{proof}
		By Theorem \ref{theorem:divisibility_byW_2} there exists a differential form $\xi\in \left(\Lambda^{n-4}(\C^n\times\C^n)^*\right)^{\U(n)}$ such that 
		\begin{align*}
		\omega:=\theta^n_{k,q}-\omega_s^2\wedge\xi
		\end{align*}
		is primitive. We can thus apply Lemma \ref{lemma:decomposition_beta_gamma} to obtain
		\begin{align*}
		|z|^2\theta^n_{k,q}=&|z|^2\omega+|z|^2\omega_s^2\wedge \xi\\
		=&\gamma_1\wedge i_{X_{\beta_1}}\omega-\beta_1\wedge i_{X_{\gamma_1}}\omega+\omega_s\wedge i_{X_{\gamma_1}}i_{X_{\beta_1}}\omega+|z|^2\omega_s^2\wedge \xi\\
		\equiv& \gamma_1\wedge i_{X_{\beta_1}}[\theta^n_{k,q}-\omega_s^2\wedge\xi]-\beta_1\wedge i_{X_{\gamma_1}}[\theta^n_{k,q}-\omega_s^2\wedge\xi]\quad\mod\omega_s\\
		\equiv& \gamma_1\wedge i_{X_{\beta_1}}\theta^n_{k,q}-\beta_1\wedge i_{X_{\gamma_1}}\theta^n_{k,q}\quad\mod\omega_s.
		\end{align*}
		Note that 
		\begin{align*}
		&i_{X_{\gamma_1}}\theta_0=0, &&i_{X_{\gamma_1}}\theta_1=-\gamma_2, &&&i_{X_{\gamma_1}}\theta_2=-\beta_2,\\
		&i_{X_{\beta_1}}\theta_0=\gamma_2, &&i_{X_{\beta_1}}\theta_1=\beta_2, &&&i_{X_{\beta_1}}\theta_2=0.
		\end{align*}
		Thus
		\begin{align*}
		i_{X_{\gamma_1}}\theta^n_{k,q}=&-(k-2q)\gamma_2\wedge\theta_0^{n-k+q}\wedge\theta_1^{k-2q-1}\wedge\theta_2^q-q\beta_2\wedge\theta_0^{n-k+q}\wedge\theta_1^{k-2q}\wedge\theta_2^{q-1}\\
		=&-(k-2q)\gamma_2\wedge\theta^{n-1}_{k-1,q}-q\beta_2\wedge\theta^{n-1}_{k-2,q-1},\\
		i_{X_{\beta_1}}\theta^n_{k,q}=&(n-k+q)\gamma_2\wedge\theta_0^{n-k+q-1}\wedge\theta_1^{k-2q}\wedge\theta_2^q+(k-2q)\beta_2\wedge\theta_0^{n-k+q}\wedge\theta_0^{k-2q-1}\wedge\theta_2^q\\
		=&(n-k+q)\gamma_2\wedge\theta^{n-1}_{k,q}+(k-2q)\beta_2\wedge\theta^{n-1}_{k-1,q},
		\end{align*}
		so we obtain
		\begin{align*}
		|z|^2\theta^n_{k,q}\equiv&\gamma_1\wedge[(n-k+q)\gamma_2\wedge\theta^{n-1}_{k,q}+(k-2q)\beta_2\wedge\theta^{n-1}_{k-1,q}]\\
		&+\beta_1\wedge[(k-2q)\gamma_2\wedge\theta^{n-1}_{k-1,q}+q\beta_2\wedge\theta^{n-1}_{k-2,q-1}]\quad\mod\omega_s.
		\end{align*}	
	\end{proof}

\subsection{Relations for the symplectic Rumin differential}
\label{section:representationSmoothValuations}
In this section we establish some relations between the symplectic Rumin differentials of the invariant forms from the previous section.
\begin{lemma}
	\label{lemma:Rumin_differential_2_1-forms}
	Let $R\in \R[X,Y,Z]$ be a homogeneous polynomial of degree $n-1$, $\phi\in C^\infty_c([0,\infty))$. Define $\psi\in C_c^\infty([0,\infty))$ by $\psi(t):=-\int_t^\infty\phi(s)ds$ for $t\ge 0$.	Then
	\begin{align*}
	\D(\phi(|z|^2) \gamma_1\wedge\gamma_2\wedge R(\theta_0,\theta_1,\theta_2))=&- \D[\psi(|z|^2) \theta_0\wedge R(\theta_0,\theta_1,\theta_2)],\\
	\D(\phi(|z|^2)\gamma_1\wedge\beta_1\wedge R(\theta_0,\theta_1,\theta_2))=&0,\\
	\D(\phi(|z|^2)\gamma_1\wedge\beta_2\wedge R(\theta_0,\theta_1,\theta_2))=&-\frac{1}{2}\D(\psi(|z|^2)\theta_1\wedge R(\theta_0,\theta_1,\theta_2)),\\
	\D(\phi(|z|^2)\gamma_2\wedge\beta_2\wedge R(\theta_0,\theta_1,\theta_2))=&0.
	\end{align*}
	
\end{lemma}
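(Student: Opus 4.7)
The plan is to express each argument of $\D$ as a sum of a closed form and a multiple of the symplectic form $\omega_s$, both of which are annihilated by $\D$ according to the two properties one may invoke from \cite{KnoerrSmoothvaluationsconvex} Proposition 5.13.

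First I collect the elementary building blocks. Since $d|z|^2 = 2\gamma_1$, the relation $\psi'=\phi$ yields $d\psi(|z|^2) = 2\phi(|z|^2)\gamma_1$. From the explicit expressions in Section \ref{section:invariantForms} one checks $d\gamma_1=0$, $d\gamma_2=2\theta_0$, $d\beta_1=\omega_s$, $d\beta_2=\theta_1$. Moreover $R(\theta_0,\theta_1,\theta_2)$ is closed since each $\theta_j$ is.

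For the first three identities the argument is uniform. For $\xi\in\{\gamma_2,\beta_1,\beta_2\}$, Leibniz gives
\begin{align*}
d(\psi(|z|^2)\,\xi\wedge R) \;=\; 2\phi(|z|^2)\,\gamma_1\wedge\xi\wedge R \;+\; \psi(|z|^2)\,d\xi\wedge R.
\end{align*}
The left-hand side is exact, hence closed, so $\D$ kills it. For identity 2, $d\beta_1 = \omega_s$ makes the remaining term an $\omega_s$-multiple, also killed by $\D$, giving $\D(\phi\gamma_1\wedge\beta_1\wedge R)=0$ directly. For identities 1 and 3, one has $d\gamma_2=2\theta_0$ and $d\beta_2=\theta_1$, and dividing by the resulting constants after applying $\D$ yields $\D(\phi\gamma_1\wedge\gamma_2\wedge R) = -\D(\psi\theta_0\wedge R)$ and $\D(\phi\gamma_1\wedge\beta_2\wedge R) = -\tfrac12\D(\psi\theta_1\wedge R)$ respectively.

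The fourth identity is the genuine obstacle, since $\gamma_2\wedge\beta_2$ contains no $\gamma_1$ factor with which to pair $d\psi(|z|^2)=2\phi\gamma_1$. My plan is to use the algebraic identities $\omega_1 = \gamma_1\wedge\beta_1+\gamma_2\wedge\beta_2$ and $|z|^2\omega_s=\omega_1+\omega_2$ from \eqref{eq:defThetaPrime} to rewrite $\gamma_2\wedge\beta_2 = |z|^2\omega_s - \gamma_1\wedge\beta_1 - \omega_2$. Wedging with $\phi(|z|^2) R$ and applying $\D$, the first summand vanishes as an $\omega_s$-multiple and the second by the already-proven identity 2, reducing the claim to $\D(\phi(|z|^2)\omega_2\wedge R)=0$. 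To close this step I plan to construct an explicit primitive of $\phi(|z|^2)\omega_2\wedge R$ modulo $\omega_s$-multiples, using the radial identity $|z|^2\omega_s = d(|z|^2\beta_1) - 2\gamma_1\wedge\beta_1$ and the closedness of $R$ to bootstrap from identity 2; the primitivity of $\omega_2$ in the Lefschetz decomposition should ensure that the residue is again a multiple of $\omega_s$. This bootstrap is where I expect the main technical difficulty, since the naive iteration is circular, and the argument will likely require careful control of how factors of $\gamma_1\wedge\beta_1$ and $\gamma_2\wedge\beta_2$ reappear in the derivatives of the candidate primitive.
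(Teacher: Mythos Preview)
Your treatment of the first three identities is correct and essentially identical to the paper's: write $2\phi(|z|^2)\gamma_1 = d\psi(|z|^2)$, absorb the outer $d$ into an exact form (killed by $\D$), and use $d\gamma_2=2\theta_0$, $d\beta_1=\omega_s$, $d\beta_2=\theta_1$ together with the vanishing of $\D$ on multiples of $\omega_s$.

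For the fourth identity, your reduction is equivalent to the paper's: since $\omega_1+\omega_2=|z|^2\omega_s$, showing $\D(\phi\,\omega_2\wedge R)=0$ is the same as showing $\D(\phi\,\omega_1\wedge R)=0$, and the paper indeed adds identity~2 to the $\gamma_2\wedge\beta_2$ term to reduce to $\D(\phi\,\omega_1\wedge R)=0$. However, your plan to close this step has a genuine gap. You propose to find an explicit primitive of $\phi(|z|^2)\,\omega_2\wedge R$ modulo $\omega_s$, but you already note that the natural bootstrap is circular, and the identity $|z|^2\omega_s=d(|z|^2\beta_1)-2\gamma_1\wedge\beta_1$ only reproduces the $\gamma_1\wedge\beta_1$ term you have already used; it does not yield a primitive for $\omega_2\wedge R$. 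There is no reason to expect $\phi\,\omega_2\wedge R$ to be exact modulo $\omega_s$ in any straightforward way.

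The paper avoids this entirely by proving a \emph{pointwise} statement: for $z\neq 0$, the form $\omega_1\wedge R(\theta_0,\theta_1,\theta_2)$ is itself a multiple of $\omega_s$ (so $\phi$ times it is as well, and $\D$ kills it; continuity handles $z=0$). The key input is Theorem~\ref{theorem:divisibility_byW_2}: there is a polynomial $Q$ of degree $n-3$ with $\omega_2\wedge(R-\omega_2^2 Q)(\theta_0',\theta_1',\theta_2',\omega_2)=0$ in $[\Lambda^*(\C^{n-1}\times\C^{n-1})]^{\U(n-1)}$. Combined with $\omega_1\wedge\theta_j'=|z|^2\omega_1\wedge\theta_j$ and the computation $\omega_1\wedge\omega_2^2\equiv 0\bmod\omega_s$, this lets one pass to the primed generators, where $\omega_1\wedge R(\theta_0,\theta_1,\theta_2)$ becomes $(\omega_1+\omega_2)\wedge(R-\omega_2^2 Q)(\theta_0',\theta_1',\theta_2',\omega_2)=|z|^2\omega_s\wedge(\cdots)$ modulo $\omega_s$. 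You should replace your bootstrap plan with this algebraic argument; the divisibility result is exactly the missing idea that breaks the circularity.
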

\begin{proof}
	If $\eta$ is any of the forms $\gamma_2,\beta_1,\beta_2$, then
	\begin{align*}
	\D(\phi(|z|^2)\gamma_1\wedge \eta\wedge R(\theta_0,\theta_1,\theta_2))=&\frac{1}{2}\D(d\psi(|z|^2)\eta\wedge R(\theta_0,\theta_1,\theta_2))\\
	=&-\frac{1}{2}\D(\psi(|z|^2)d\eta\wedge R(\theta_0,\theta_1,\theta_2)),
	\end{align*}
	because $\D$ vanishes on closed forms. As $d\gamma_2=2\theta_0$, $d\beta_1=\omega_s$ and $d\beta_2=\theta_1$, this implies the first three relations using that $\D$ vanishes on multiples of $\omega_s$. \\
	For the last equation, let us show that $\D(\phi(|z|^2)\omega_1\wedge R(\theta_0,\theta_1,\theta_2))=0$. First note that $\omega_1\wedge\theta_j'=|z|^2\omega_1\wedge\theta_j$ for all $j=0,1,2$, where $\theta_j'$ denotes the forms defined in \eqref{eq:defThetaPrime}. Let $Q\in\R[X,Y,Z,W]$ be the unique homogeneous polynomial of degree $n-3$ such that $W(R-W^2Q)=0$ in $[\Lambda^*(\C^{n-1}\times\C ^{n-1})^*]^{\mathrm{U}(n-1)}$, which exists due to Theorem \ref{theorem:divisibility_byW_2}. As
	\begin{align*}
	\omega_1\wedge\omega_2^2=&\omega_1\wedge (|z|^2\omega_s-\omega_1)^2=\omega_1\wedge (|z|^4\omega_s^2-2|z|^2\omega_s\wedge\omega_1+\omega_1^2)\\
	=&\omega_1\wedge (|z|^4\omega_s^2-2|z|^2\omega_s\wedge\omega_1)=|z|^2\omega_s\wedge \omega_1\wedge (\omega_2-\omega_1)
	\end{align*}
	is a multiple of $\omega_s$, we obtain in $(0,\zeta)\ne(z,\zeta)\in T^*\C^n$
	\begin{align*}
	&\omega_1\wedge R(\theta_0,\theta_1,\theta_2)\\
	=&|z|^{-2(n-1)}\omega_1\wedge (R-W^2 Q)(|z|^2\theta_0,|z|^2\theta_1,|z|^2\theta_2,\omega_2)\\&+|z|^{-2(n-1)}\omega_1\wedge\omega_2^2\wedge Q(|z|^2\theta_0,|z|^2\theta_1,|z|^2\theta_2,\omega_2)\\
	\equiv& |z|^{-2(n-1)}\omega_1\wedge (R-W^2 Q)(|z|^2\theta_0,|z|^2\theta_1,|z|^2\theta_2,\omega_2)\quad\mod \omega_s\\
	=&|z|^{-2(n-1)}(\omega_1+\omega_2)\wedge (R-W^2 Q)(\theta'_0,\theta'_1,\theta'_2,\omega_2)\\
	=&|z|^{-2(n-1)}|z|^2\omega_s\wedge (R-W^2 Q)(\theta'_0,\theta'_1,\theta'_2,\omega_2).
	\end{align*}
	Here we used that $\omega_2\wedge [R-WQ](\theta'_0,\theta'_1,\theta'_2,\omega_2)=0$ and that $\omega_1\wedge |z|^2\theta_j=\omega_1\wedge\theta'_j$. As $\D$ vanishes on multiples of $\omega_s$, this implies
	\begin{align*}
	\D(\phi(|z|^2)\omega_1\wedge R(\theta_0,\theta_1,\theta_2))|_{(z,\zeta)}=0\quad \text{for }(z,\zeta)\in T^*\C^n, z\ne0,
	\end{align*}
	so $\D(\phi(|z|^2)\omega_1\wedge R(\theta_0,\theta_1,\theta_2))=0$ by continuity. In particular,
	\begin{align*}
	&\D(\phi(|z|^2)\gamma_2\wedge\beta_2\wedge R(\theta_0,\theta_1,\theta_2))\\
	=&\D(\phi(|z|^2)\gamma_2\wedge\beta_2\wedge R(\theta_0,\theta_1,\theta_2))+\D(\phi(|z|^2)\gamma_1\wedge\beta_1\wedge R(\theta_0,\theta_1,\theta_2))\\
	=&\D(\phi(|z|^2)\omega_1\wedge R(\theta_0,\theta_1,\theta_2))=0.
	\end{align*}
\end{proof}

\begin{lemma}
	\label{lemma:Rumin_differential_pointwise_relation}
	For $\phi\in C_c^\infty([0,\infty))$ define $\psi\in C_c^\infty([0,\infty))$ by $\psi(t):=-\int_t^\infty\phi(s)ds$ for $t\ge 0$. Then
	\begin{align*}
	&\D\left(\left[\phi(|z|^2)|z|^2+\frac{2n-k}{2}\psi(|z|^2)\right]\theta^n_{k,q}\right)\\
	=&(k-2q)\D\left(\phi(|z|^2)\beta_1\wedge\gamma_2\wedge\theta^{n-1}_{k-1,q}\right)+q\D\left(\phi(|z|^2)\beta_1\wedge\beta_2\wedge\theta^{n-1}_{k-2,q-1}\right).
	\end{align*}
\end{lemma}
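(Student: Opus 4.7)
The plan is to derive the identity by combining Corollary \ref{corollary:decomposition_theta_tau} with the vanishing relations collected in Lemma \ref{lemma:Rumin_differential_2_1-forms}. The left-hand side splits naturally into two parts: multiplication of $\theta^n_{k,q}$ by $\phi(|z|^2)|z|^2$ and by $\tfrac{2n-k}{2}\psi(|z|^2)$. I will rewrite the first using the pointwise decomposition of $|z|^2\theta^n_{k,q}$ modulo $\omega_s$, and then convert the unwanted $\gamma_1\wedge(\cdot)$ contributions into $\psi$-terms involving $\theta^n_{k,q}$, which will exactly cancel the second part up to the asserted $\beta_1\wedge\gamma_2$ and $\beta_1\wedge\beta_2$ remainders.

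First I would multiply the congruence in Corollary \ref{corollary:decomposition_theta_tau} by $\phi(|z|^2)$ and apply $\D$. Since $\D$ vanishes on multiples of the symplectic form $\omega_s$ (see \cite{KnoerrSmoothvaluationsconvex} Proposition 5.13), this yields
\begin{align*}
\D(\phi(|z|^2)|z|^2\theta^n_{k,q})
=&(n-k+q)\D(\phi(|z|^2)\gamma_1\wedge\gamma_2\wedge\theta^{n-1}_{k,q})\\
&+(k-2q)\D(\phi(|z|^2)\gamma_1\wedge\beta_2\wedge\theta^{n-1}_{k-1,q})\\
&+(k-2q)\D(\phi(|z|^2)\beta_1\wedge\gamma_2\wedge\theta^{n-1}_{k-1,q})\\
&+q\D(\phi(|z|^2)\beta_1\wedge\beta_2\wedge\theta^{n-1}_{k-2,q-1}).
\end{align*}
The last two terms are already those appearing on the right-hand side of the claim.

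Next I would apply Lemma \ref{lemma:Rumin_differential_2_1-forms} to the $\gamma_1\wedge\gamma_2$ and $\gamma_1\wedge\beta_2$ contributions, taking $R(\theta_0,\theta_1,\theta_2)=\theta^{n-1}_{k,q}$ and $R(\theta_0,\theta_1,\theta_2)=\theta^{n-1}_{k-1,q}$ respectively. Using the elementary wedge identities
\begin{align*}
\theta_0\wedge\theta^{n-1}_{k,q}=\theta^n_{k,q},\qquad \theta_1\wedge\theta^{n-1}_{k-1,q}=\theta^n_{k,q},
\end{align*}
the two terms collapse into
\begin{align*}
-(n-k+q)\D(\psi(|z|^2)\theta^n_{k,q})-\tfrac{k-2q}{2}\D(\psi(|z|^2)\theta^n_{k,q})=-\tfrac{2n-k}{2}\D(\psi(|z|^2)\theta^n_{k,q}),
\end{align*}
since $2(n-k+q)+(k-2q)=2n-k$. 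Rearranging the resulting equation then gives the stated identity.

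The entire argument is essentially bookkeeping once the ingredients are in place. The only subtle point is the degree convention: one must check that the identifications $\theta_0\wedge\theta^{n-1}_{k,q}=\theta^n_{k,q}$ and $\theta_1\wedge\theta^{n-1}_{k-1,q}=\theta^n_{k,q}$ remain valid in the boundary cases $q=\max(0,k-n)$ or $q=\lfloor k/2\rfloor$, where by convention $\theta^{n-1}_{k-1,q}$ or $\theta^{n-1}_{k-2,q-1}$ may vanish, and to ensure that the coefficients $(n-k+q)$, $(k-2q)$, $q$ correctly absorb those vanishings. Apart from this routine check, no additional obstacle is expected.
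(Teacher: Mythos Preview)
Your proof is correct and follows essentially the same route as the paper: apply $\D$ to $\phi(|z|^2)$ times the decomposition of $|z|^2\theta^n_{k,q}$ modulo $\omega_s$ from Corollary~\ref{corollary:decomposition_theta_tau}, then use Lemma~\ref{lemma:Rumin_differential_2_1-forms} to convert the $\gamma_1\wedge\gamma_2$ and $\gamma_1\wedge\beta_2$ terms into $-\frac{2n-k}{2}\D(\psi(|z|^2)\theta^n_{k,q})$ and rearrange. The paper's proof is identical up to the order of presentation.
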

\begin{proof}
	As $\psi'=\phi$, Lemma \ref{lemma:Rumin_differential_2_1-forms} implies
	\begin{align*}
	\D(\psi(|z|^2)\theta^n_{k,q})=-\D(\phi(|z|^2)\gamma_1\wedge\gamma_2\wedge\theta^{n-1}_{k,q})=-2\D(\phi(|z|^2)\gamma_1\wedge\beta_2\wedge \theta^{n-1}_{k-1,q}).
	\end{align*}
	According to Corollary \ref{corollary:decomposition_theta_tau},
	\begin{align*}
	|z|^2\theta^n_{k,q}\equiv&\gamma_1\wedge[(n-k+q)\gamma_2\wedge\theta^{n-1}_{k,q}+(k-2q)\beta_2\wedge\theta^{n-1}_{k-1,q}]\\
	&+\beta_1\wedge[(k-2q)\gamma_2\wedge\theta^{n-1}_{k-1,q}+q\beta_2\wedge\theta^{n-1}_{k-2,q-1}]\quad\mod\omega_s.
	\end{align*}	
	As $\D$ vanishes on multiples of $\omega_s$, this implies
	\begin{align*}
	&\D(\phi(|z|^2)|z|^2\theta^n_{k,q})\\
	=&(n-k+q)\D(\phi(|z|^2)\gamma_1\wedge\gamma_2\wedge \theta^{n-1}_{k,q})+(k-2q)\D(\phi(|z|^2)\gamma_1\wedge\beta_2\wedge\theta^{n-1}_{k-1,q})\\
	&+(k-2q)\D(\phi(|z|^2)\beta_1\wedge\gamma_2\wedge\theta^{n-1}_{k-1,q})+q\D(\phi(|z|^2)\beta_1\wedge\beta_2\wedge\theta^{n-1}_{k-2,q-1})\\
	=&-(n-k+q)\D(\psi(|z|^2) \theta^{n}_{k,q})-\frac{1}{2}(k-2q)\D(\psi(|z|^2)\theta^{n}_{k,q})\\
	&+(k-2q)\D(\phi(|z|^2)\beta_1\wedge\gamma_2\wedge\theta^{n-1}_{k-1,q})+q\D(\phi(|z|^2)\beta_1\wedge\beta_2\wedge\theta^{n-1}_{k-2,q-1}).
	\end{align*}
	Rearranging this equation, we obtain the desired result.
\end{proof}

\begin{lemma}
	\label{lemma:replace_4_1-forms}
	Let $R\in\R[X,Y,Z]$ be a homogeneous polynomial of degree $n-2$ and $\phi\in C_c^\infty([0,\infty))$.
	Then
	\begin{align*}
	&2\D(\phi'(|z|^2)\gamma_1\wedge\gamma_2\wedge\beta_1\wedge\beta_2\wedge  R(\theta_0,\theta_1,\theta_2))\\
	=&\D(\phi(|z|^2)\left[\beta_1\wedge \gamma_2\wedge \theta_1 -2\beta_1\wedge\beta_2\wedge \theta_0\right]\wedge R(\theta_0,\theta_1,\theta_2)).
	\end{align*}
\end{lemma}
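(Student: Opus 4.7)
The plan is to exploit the two vanishing properties of $\D$ mentioned just before Theorem~\ref{theorem:Kernel_theorem}: namely that $\D$ kills closed forms (in particular, exact forms) and kills multiples of the symplectic form $\omega_s$. The key observation is that $\gamma_1$ is, up to a factor of $2$, exactly $d(|z|^2)$, because $|z|^2 = \sum_j(x_j^2 + y_j^2)$ gives $d|z|^2 = 2\gamma_1$. Consequently $2\phi'(|z|^2)\gamma_1 = d(\phi(|z|^2))$, and the left-hand side of the claimed identity can be rewritten as $\D\bigl(d(\phi(|z|^2))\wedge \gamma_2\wedge\beta_1\wedge\beta_2\wedge R(\theta_0,\theta_1,\theta_2)\bigr)$.

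Next I would set $\omega := \gamma_2\wedge\beta_1\wedge\beta_2\wedge R(\theta_0,\theta_1,\theta_2)$ and apply the Leibniz rule to $d\bigl(\phi(|z|^2)\omega\bigr)$. Since the whole expression inside $d$ is exact and $\D$ vanishes on closed forms, we obtain
\begin{align*}
\D\bigl(d(\phi(|z|^2))\wedge\omega\bigr) = -\D\bigl(\phi(|z|^2)\,d\omega\bigr).
\end{align*}
The forms $\theta_0,\theta_1,\theta_2$ all have constant coefficients and so are closed; thus $R(\theta_0,\theta_1,\theta_2)$ is closed and $d\omega$ only picks up contributions from differentiating $\gamma_2,\beta_1,\beta_2$. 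A direct computation from the definitions yields $d\gamma_2 = 2\theta_0$, $d\beta_1 = \omega_s$ and $d\beta_2 = \theta_1$, so
\begin{align*}
d\omega = 2\theta_0\wedge\beta_1\wedge\beta_2\wedge R - \gamma_2\wedge\omega_s\wedge\beta_2\wedge R + \gamma_2\wedge\beta_1\wedge\theta_1\wedge R.
\end{align*}

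At this point the middle term is a multiple of $\omega_s$ and therefore vanishes under $\D$. Rearranging the remaining wedge products using that $\theta_0,\theta_1$ are even-degree (hence commute with anything) while the $1$-forms $\beta_i,\gamma_i$ anticommute among themselves, one finds
\begin{align*}
d\omega \equiv 2\beta_1\wedge\beta_2\wedge\theta_0\wedge R - \beta_1\wedge\gamma_2\wedge\theta_1\wedge R \pmod{\omega_s}.
\end{align*}
Combining with the earlier identity and multiplying by $-1$ gives exactly the right-hand side of the claim. There is no real obstacle here: the only points to watch are the correct sign bookkeeping in the wedge rearrangements and the fact that $R(\theta_0,\theta_1,\theta_2)$ is indeed closed, which justifies omitting it when computing $d\omega$.
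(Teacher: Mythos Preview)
Your proof is correct and follows essentially the same route as the paper's: both use $d(\phi(|z|^2))=2\phi'(|z|^2)\gamma_1$, apply the Leibniz rule together with the vanishing of $\D$ on exact forms, compute $d\gamma_2=2\theta_0$, $d\beta_1=\omega_s$, $d\beta_2=\theta_1$, and then discard the $\omega_s$-term. The sign bookkeeping you flagged is handled identically in the paper.
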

\begin{proof}
	 Using that $\D$ vanishes on closed forms, we obtain
	\begin{align*}
	&\D(\phi'(|z|^2)\gamma_1\wedge\gamma_2\wedge\beta_1\wedge\beta_2\wedge  R(\theta_0,\theta_1,\theta_2))\\
	=&\frac{1}{2}\D\left(d\phi(|z|^2)\gamma_2\wedge\beta_1\wedge\beta_2\wedge R(\theta_0,\theta_1,\theta_2)\right)\\
	=&\frac{1}{2}\D\left(d(\phi(|z|^2)\gamma_2\wedge\beta_1\wedge\beta_2\wedge R(\theta_0,\theta_1,\theta_2))\right)-\frac{1}{2}\left(\phi(|z|^2)d(\gamma_2\wedge\beta_1\wedge\beta_2\wedge R(\theta_0,\theta_1,\theta_2))\right)\\
	=&0-\frac{1}{2}\D\left(\phi(|z|^2)[2\theta_0\wedge\beta_1\wedge\beta_2-\gamma_2\wedge\omega_s\wedge\beta_2+\gamma_2\wedge\beta_1\wedge\theta_1]\wedge R(\theta_0,\theta_1,\theta_2)\right)\\
	=&\frac{1}{2}\D(\phi(|z|^2)\left[\beta_1\wedge \gamma_2\wedge \theta_1 -2\beta_1\wedge\beta_2\wedge \theta_0\right]\wedge R(\theta_0,\theta_1,\theta_2)),
	\end{align*}
	due to the fact that $\D$ vanishes on multiples of $\omega_s$.
\end{proof}

\section{Characterization of smooth valuations $\U(n)$-invariant valuations}
	\label{section:characSmoothVal}
	\subsection{Four families of Monge--Amp\`ere-type operators}
			The differential forms constructed so far give rise to the following Monge--Amp\`ere-type operators (where we use the notation established in \eqref{eq:MAOperators})
		\begin{definition}
			\label{definition:ComplexMA}
			We define measure-valued valuations $\Conv(\R^n,\R)\rightarrow\mathcal{M}(\C^n)$ by 
			\begin{align*}
				\Theta^n_{k,q}:=&c_{n,k,q}\Psi_{\theta^n_{k,q}}&&\text{for}~\max(0,k-n)\le q\le \left\lfloor\frac{k}{2}\right\rfloor,\\
				\mathcal{B}^n_{k,q}:=&c_{n,k,q}\Psi_{\beta_1\wedge\beta_2\wedge \theta^{n-1}_{k-2,q-1}} &&\text{for}~2\le k\le 2n-1,~\max(1,k-n)\le q\le \left\lfloor\frac{k}{2}\right\rfloor,\\
				\mathcal{C}^n_{k,q}:=&\frac{c_{n,k,q}}{2}\Psi_{\beta_1\wedge\gamma_2\wedge \theta^{n-1}_{k-1,q}} &&\text{for}~1\le k\le 2n-1,~\max(0,k-n)\le q\le \left\lfloor\frac{k-1}{2}\right\rfloor,\\
				\Upsilon^n_{k,q}:=&\mathcal{B}^n_{k,q}-\mathcal{C}^n_{k,q}&& \text{for}~2\le k\le 2n-1,~\max(1,k-n)\le q\le \left\lfloor\frac{k-1}{2}\right\rfloor,
			\end{align*}
			where we set $c_{n,k,q}:=\frac{1}{(n-k+q)!q!(k-2q)!}$. 
		\end{definition}
		For simplicity we set $\upsilon^n_{k,q}:=\beta_1\wedge \beta_2\wedge \theta^{n-1}_{k-2,q-1}-2\beta_1\wedge\gamma_2\wedge\theta^{n-1}_{k-1,q}$, so $\Upsilon^n_{k,q}=c_{n,k,q}\Psi_{\upsilon^n_{k,q}}$. In order to avoid unnecessary distinctions, we set $\Theta^n_{k,q}=0$, $\Upsilon^n_{k,q}=0$ if $n,k,q$ do not satisfy the conditions above.\\
	
		For $\max(0,k-n)\le q\le \lfloor\frac{k}{2}\rfloor$, we denote the coordinates on the two factors of $E_{k,q}=\C^q\times \R^{k-2q}$ by $(z,x)$, where $z=(z_1,\dots,z_q)$ and $x=(x_{q+1},\dots,x_{k-q})$ are the restrictions of the coordinates from $\C^n$ to $E_{k,q}\cong \C^q\times \R^{k-2q}\times\{0\}^{n-k+q}\subset\C^n$.
		\begin{proposition}
			\label{proposition:RestrictionExtremalSpaceMeasure}
			Let $\phi\in C_c(\C^n)$, $1\le k\le 2n$. Then
			\begin{enumerate}
				\item for $\max(0,k-n)\le p,q\le \lfloor\frac{k}{2}\rfloor$, and $f\in \Conv(E_{k,p},\R)$:
				\begin{align*}
					\int_{\C^n} \phi d\Theta^n_{k,q}(\pi_{E_{k,p}}^*f)=\delta_{p,q}\int_{E_{k,q}\times E_{k,q}^\perp} \phi(z) d(\MA_{E_{k,q}}(f)\otimes \vol_{E_{k,q}^\perp})(z),
				\end{align*}
				\item for $2\le k\le 2n-1$, $\max(1,k-n)\le p,q\le \lfloor\frac{k}{2}\rfloor$:
				\begin{align*}
					&\int_{\C^n} \phi d\mathcal{B}^n_{k,q}(\pi_{E_{k,p}}^*f)
					=\delta_{p,q} \int_{E_{k,q}\times E_{k,q}^\perp} \phi(z)\frac{\sum_{j=1} ^q|z_j|^2}{q} d(\MA_{E_{k,q}}(f)\otimes \vol_{E_{k,q}^\perp})(z)
				\end{align*}
				\item for $1\le k\le 2n-1$, $\max(1,k-n)\le p,q\le \lfloor\frac{k-1}{2}\rfloor$:
				\begin{align}
					\label{eq:RestcalC}
					\int_{\C^n} \phi d\mathcal{C}^n_{k,q}(\pi_{E_{k,p}}^*f)=\delta_{p,q}\int_{E_{k,p}\times E_{k,p}^\perp} \phi(z)\frac{\sum_{j=1}^{k-2q}|x_{q+j}|^2}{2(k-2q)} d(\MA_{E_{k,p}}(f)\otimes \vol_{E_{k,q}^\perp})(z),
				\end{align}
			\end{enumerate} 
			where $\vol_{E_{k,q}^\perp}$ denotes the Lebesgue measure on $E_{k,q}^\perp$.
		\end{proposition}
		\begin{proof}
			In each of these cases, both sides of the equation define continuous valuations in $\VConv_k(E_{k,q})$. In order to show that these valuations coincide, we calculate the Fourier--Laplace transform of their Goodey--Weil distribution. Since the calculation is very similar in each of these cases, we will show \eqref{eq:RestcalC} and leave the other two cases to the reader. Consider the valuations
			\begin{align*}
				\mu(f):=\int_{\C^n} \phi d\mathcal{C}^n_{k,q}(\pi_{E_{k,p}}^*f), \quad f\in \Conv(E_{k,p},\R).
			\end{align*}
			 Due to Lemma \ref{lemma:interpretation_fourier_maxdegree}, it is sufficient to evaluate $\mathcal{F}(\GW(\mu))[i\otimes w_1,\dots,i\otimes w_k]$ in the orthogonal bases given by
			\begin{align*}
				w_j=\begin{cases}
					a_l  e_l & j=2l-1, 1\le l\le p,\\
					b_l  ie_l & j=2l, 1\le l\le p,\\
					c_l  e_{p+l}& j=2p+l, 1\le l\le k-2p,
				\end{cases}
			\end{align*}  
			for $a_j,b_j,c_j\in\R$. Consider the function $f\in \Conv(\C^n,\R)$ given by
			\begin{align*}
				f(z)&=\sum_{j=1}^k \lambda_j \exp\left(\Re\langle w_j,z\rangle\right)\\
				&=\sum_{j=1}^p \lambda_{2j-1}\exp(a_jx_j)+\lambda_{2j}\exp(-b_jy_j)+\sum_{j=1}^{k-2p} \lambda_{2p+j}\exp(c_jx_{p+j}).
			\end{align*}
			Then $f$ is the pullback of the corresponding sum of exponential functions defined on $E_{k,p}$.
			Since $f$ is smooth, the differential cycle $D(f)$ is given by integration over the graph of $df$. Consider the map $G_f:\C^n\rightarrow \C^n\times\C^n$, $G_f(z)=(z,df(z))$. Then
			\begin{align}
				\label{eq:formulaCalCRestriction}
				\mathcal{F}(\GW(\mu))[ w_1\otimes i,\dots, w_k\otimes i]=\frac{(-1)^k}{k!}\frac{c_{n,k,q}}{2}\int_{\C^n}\phi(z)\frac{\partial}{\partial\lambda_1\dots\lambda_k}\Big|_0 G_f^*(\beta_1\wedge \gamma_2\wedge \theta^{n-1}_{k-1,q}).
			\end{align}
			
			A short calculation shows
			\begin{align*}
				G_f^*\theta_1=&\sum_{j=1}^p[\lambda_{2j-1}a_j^2\exp(a_jx_j)+\lambda_{2j}b_j^2\exp(-b_jy_j)]dx_j\wedge dy_j,\\
				&+\sum_{j=1}^{k-2p}\lambda_{2p+j}c_j^2\exp(c_jx_{p+j})dx_{p+j}\wedge dy_{p+j}\\
				=&:\theta_{1,c}+\theta_{1,r},\\
				G_f^*\theta_2=&\sum_{j=1}^p a_j^2b_j^2\lambda_{2j-1}\lambda_{2j} \exp(a_jx_j-b_jy_j)dx_j\wedge dy_j=:\theta_{2,c},\\
				G_f^*\beta_1=&\sum_{j=1}^p \lambda_{2j-1}a_j^2x_j\exp(a_jx_j)dx_j+\lambda_{2j}b_j^2y_j\exp(-b_jy_j)dy_j ,\\
				&+\sum_{j=1}^{k-2p} \lambda_{2p+j}c_j^2x_{p+j}\exp(c_jx_{p+j})dx_{p+j}.			
			\end{align*}
			Moreover, we split the forms
			\begin{align*}
				G_f^*\theta_0=&\sum_{j=1}^{k-p}dx_j\wedge dy_j+\sum_{j=k-p+1}^{n}dx_j\wedge dy_j=:\theta_{0,p}+\theta_{0,\perp}\\
				G_f^*\gamma_2=&\sum_{j=1}^{p}x_jdy_j-y_jdx_j+\sum_{j=p+1}^{k-p}x_jdy_j-y_jdx_j+\sum_{j=k-p+1}^{n}x_jdy_j-y_jdx_j.
			\end{align*}
			Note that $G_f^*\theta^{n-1}_{k-1,q}$ always involves $n-1$ products of the forms $dx_j\wedge dy_j$, $1\le j\le n$. In order to obtain a nontrivial $2n$-form on $\C^n$, the form $\beta_1\wedge \gamma_2$ has to contribute the missing form to this product, so
			\begin{align*}
				G_f^*(\beta_1\wedge\gamma_2\wedge \theta^{n-1}_{k-1,q})=(\delta_c+\delta_r)\wedge \theta_{0,\perp}^{n-k+q}\wedge (\theta_{1,c}+\theta_{1,r})^{k-1-2q}\wedge \theta_{2,c}^q
			\end{align*}
			for
			\begin{align*}
				\delta_c=&\sum_{j=1}^p\left(\lambda_{2j-1}a_j^2x_j^2\exp(a_jx_j)+\lambda_{2j}b_j^2y_j^2\exp(-b_jy_j)\right)dx_j\wedge dy_j,\\
				\delta_r=&\sum_{j=1}^{k-2p}\lambda_{2p+j}c_j^2x_{p+j}^2\exp(c_jx_{p+j})dx_{p+j}\wedge dy_{p+j}.
			\end{align*}
			Taking the degrees of these forms into account, we thus obtain
			\begin{align*}
				G_f^*(\beta_1\wedge\gamma_2\wedge \theta^{n-1}_{k-1,q})=&\binom{k-1-q}{2(p-q)-1}\delta_c\wedge \theta_{0,\perp}^{n-k+q}\wedge \theta_{1,c}^{2p-2q-1}\wedge\theta_{1,r}^{k-2p}\wedge \theta_{2,c}^q\\
				&+\binom{k-1-q}{2(p-q)}\delta_r\wedge \theta_{0,\perp}^{n-k+q}\wedge \theta_{1,c}^{2p-2q}\wedge\theta_{1,r}^{k-2p-1}\wedge \theta_{2,c}^q
			\end{align*}
			The first term vanishes obviously for $p\le q$, while for $p\ge q+1$, the form $\delta_c\wedge \theta_{1,c}^{2p-2q-1}\wedge \theta_{2,c}^q$ has degree $2(2p-q)\ge 2(2p-(p-1))=4p+2$ and involves $2p$ variables, so this term vanishes identically. Similarly, the second term vanishes for $p<q$, and for $p\ge q$, $\theta_{1,c}^{2p-2q}\wedge \theta_{2,c}^q$ is a form of degree $2(2p-q)$ in $2p$ variables. Thus the second form vanishes for $p\ne q$. For $p=q$, a simple calculation shows that
			\begin{align*}
				\theta_{2,c}^p=&p!\exp\left(\Re\left\langle\sum_{j=1}^{2p}w_j,z\right\rangle\right) 
				\prod_{j=1}^{p}a_j^2b_j^2\lambda_{2j-1}\lambda_{2j}dx_{j}\wedge dy_{j}&
			\end{align*}
			as well as
			\begin{align*}
				\delta_r\wedge \theta_{1,r}^{k-2p-1}=&(k-2p-1)!\exp\left(\Re\left\langle\sum_{j=2p+1}^{k}w_j,z\right\rangle\right)\left(\sum_{j=1}^{k-2p}|x_{p+j}|^2\right)\cdot\prod_{j=1}^{k-2p}c_j^2\lambda_{2p+j}dx_{p+j}\wedge dy_{p+j}.
			\end{align*}			
		Note that 
		\begin{align*}
			\theta_{0,\perp}^{n-k+q}=(n-k+q)!\prod_{j={k-q+1}}^{n}dx_j\wedge dy_j.
		\end{align*}
		Since $c_{n,k,q}=\frac{1}{(n-k+q)!q!(k-2q)!}$, this implies for $p=q$,
		\begin{align*}
			&\frac{c_{n,k,q}}{2}\frac{\partial^k}{\partial\lambda_1\dots\partial\lambda_k}\Big|_0G_f^*(\beta_1\wedge\gamma_2\wedge \theta^{n-1}_{k-1,q})\\
			=&\frac{1}{2(k-2q)}\exp\left(\Re\left\langle\sum_{j=1}^{k}w_j,z\right\rangle\right)\left(\sum_{j=1}^{{k-2q}}|x_{q+j}|^2\right) \det(\Re\langle w_j,w_l\rangle)_{j,l=1}^k\prod_{j=1}^ndx_j\wedge dy_j.
		\end{align*}
		From \eqref{eq:formulaCalCRestriction}, we thus obtain
		\begin{align*}
			&\mathcal{F}(\GW(\mu))[w_1\otimes i,\dots,w_k\otimes i]\\
			=&\frac{(-1)^k}{k!}\frac{\det(\Re\langle w_j,w_l\rangle)_{j,l=1}^k}{2(k-2q)}\int_{\C^n}\phi(z)\exp\left(\Re\left\langle\sum_{j=1}^{k}w_j,z\right\rangle\right)\left(\sum_{j=1}^{{k-2q}}|x_{q+j}|^2\right) d\vol_{2n}(z),
		\end{align*}
		which is exactly the Fourier--Laplace transform of the Goodey--Weil distribution corresponding to the right hand side of \eqref{eq:RestcalC}, compare Lemma \ref{lemma:interpretation_fourier_maxdegree}. Since both sides vanish for $p\ne q$, this completes the proof.
	\end{proof}

	\begin{corollary}
		\label{corollary:vanishing_propertiesMeasures}
		Let $0\le k\le 2n$, and $\max(0,k-n)\le p,q\le \lfloor\frac{k}{2}\rfloor$. If $p\ne q$, then $\Theta^n_{k,q}$, $\mathcal{B}^n_{k,q}$, $\mathcal{C}^n_{k,q}$ and $\Upsilon^n_{k,q}$ vanish on functions of the form $\pi_{E_{k,p}}^*f$ for $f\in\Conv(E_{k,p},\R)$.
	\end{corollary}
	\begin{proof}
		By Proposition \ref{proposition:RestrictionExtremalSpaceMeasure}, the integral of $\phi\in C_c(\C^n)$ with respect to any of the measures $\Theta^n_{k,q}(\pi_{E_{k,p}}^*f)$, $\mathcal{B}^n_{k,q}(\pi_{E_{k,p}}^*f)$, $\mathcal{C}^n_{k,q}(\pi_{E_{k,p}}^*f)$ and $\Upsilon^n_{k,q}(\pi_{E_{k,p}}^*f)$ for $f\in \Conv(E_{k,p},\R)$ vanishes, so these measures vanish identically.
	\end{proof}
	\begin{corollary}
		\label{corollary:measuresInduceVConkq}
		For every $\phi\in C^\infty_c([0,\infty))$ and every $\max(0,k-n)\le q\le \frac{k}{2}$, the valuations
		\begin{align*}
			f\mapsto \int_{\C^n}\phi(|z|^2)d\Theta^n_{k,q}(f;z), &&			f\mapsto \int_{\C^n}\phi(|z|^2)d\Upsilon^n_{k,q}(f;z)
		\end{align*}
		belong to $\VConv_{k,q}(\C^n)^{\U(n),sm}$.
	\end{corollary}
	\begin{proof}
		By construction, these valuations are induced by smooth differential forms, so they define smooth valuations. Moreover, since the relevant forms are $\U(n)$-invariant, so are the valuations. Finally, note that their restrictions to pullbacks of functions defined on $E_{k,p}$ along the orthogonal projection vanishes by Proposition \ref{proposition:RestrictionExtremalSpaceMeasure}. Thus they belong to $\VConv_{k,q}(\C^n)^{\U(n),sm}$.
	\end{proof}

		Next, we are going to explicitly calculate the restriction of these valuations to the extremal subspace $E_{k,q}$, $\max(0,k-n)\le q\le \lfloor\frac{k}{2}\rfloor$, which involves the Abel transform of the underlying densities. We refer to \cite[Chapter 13]{BracewellFouriertransformits1986} for a background on this transform and only collect the facts that are relevant for our purposes.\\
		Set $
		C_b((0,\infty)):=\{\phi\in C((0,\infty)): \supp\phi\subset (0,R]\quad\text{for some }R>0\}$.
		The Abel transform $\mathcal{A}:C_b((0,\infty))\rightarrow C_b((0,\infty))$ is given by
		\begin{align*}
			\mathcal{A}\phi (t):=\int_{-\infty}^{+\infty}\phi(\sqrt{t^2+s^2})ds\quad\text{for }t>0.
		\end{align*}
		In particular, if we consider the $k$-times iterated Abel transform $\mathcal{A}^k=\mathcal{A}\circ\dots\circ\mathcal{A}$, we obtain
		\begin{align*}
			\mathcal{A}^k\phi(t)=\int_{\R^k}\phi(\sqrt{t^2+|x|^2})d\vol_k(x).
		\end{align*}
		Note that $\mathcal{A}(\phi)$ is a smooth function if $\phi$ is smooth. If $\psi\in C_b((0,\infty))$ is continuously differentiable, then it is contained in the image of $\mathcal{A}$ and 
		\begin{align}
			\label{eq:inverseAbel}
			\mathcal{A}^{-1}\psi(t)=-\frac{1}{\pi}\int_t^\infty \frac{\psi'(s)}{\sqrt{s^2-t^2}}dt.
		\end{align}
		In particular, $\mathcal{A}:C_b((0,\infty))\rightarrow C_b((0,\infty))$ is injective. The Abel transform is intimately related to the Fourier--Laplace transform of rotation invariant functions: For $\phi\in C_c([0,\infty))$, $w\in E\in\Gr_k(\C^n)$
		\begin{align*}
			\mathcal{F}(\phi(|\cdot|))[w]=\int_{\C^n}\phi(|z|)\exp(i\langle w,z\rangle )d\vol_{2n}(z)=\int_E \mathcal{A}^{2n-k}\phi(|z'|)\exp(i\langle w,z'\rangle )d\vol_{E}(z').
		\end{align*}
		By considering the Fourier--Laplace transforms of the Goodey--Weil distributions of the smooth valuations calculated in Proposition \ref{proposition:RestrictionExtremalSpaceMeasure}, we obtain the following formulas for the restriction of smooth valuations to the extremal subspaces $E_{k,q}$.
		\begin{corollary}
			\label{corollary:DensitiesRestrictionExtremalSmoothCase}
			Let $1\le k\le 2n$, $\phi\in C_c([0,\infty))$. Then for every $f\in\Conv(E_{k,q},\R)$,
			\begin{align*}
				\int_{\C^n}\phi(|z|)d\Theta^n_{k,q}(\pi_{E_{k,q}}^*f;z)=&\int_{E_{k,q}} \mathcal{A}^{2n-k}\phi (|(z,x)|)d\MA_{E_{k,q}}(f;(z,x)),\\
				\int_{\C^n}\phi(|z|)d\mathcal{B}^n_{k,q}(\pi_{E_{k,q}}^*f;z)=&\frac{1}{q}\int_{E_{k,q}}|z|^2\mathcal{A}^{2n-k}\phi(|(z,x)|) d\MA_{E_{k,q}}(f;(z,x)),\\
				\int_{\C^n}\phi(|z|)d\mathcal{C}^n_{k,q}(\pi_{E_{k,q}}^*f;z)=&\frac{1}{2(k-2q)}\int_{E_{k,q}}|x|^2\mathcal{A}^{2n-k}\phi(|(z,x)|)d\MA_{E_{k,q}}(f;(z,x)),\\
				\int_{\C^n}\phi(|z|)d\Upsilon^n_{k,q}(\pi_{E_{k,q}}^*f;z)=&\int_{E_{k,q}}\left(\frac{|z|^2}{q}-\frac{|x|^2}{2(k-2q)}\right)\mathcal{A}^{2n-k}\phi(|(z,x)|)d\MA_{E_{k,q}}(f;(z,x)).
			\end{align*}
		\end{corollary}
		\begin{proof}
			These are all direct consequences of Proposition \ref{proposition:RestrictionExtremalSpaceMeasure}. As an example, we consider the third equation. Applying Fubini's Theorem to the expression in Proposition \ref{proposition:RestrictionExtremalSpaceMeasure}, we obtain
			\begin{align*}
				&\int_{\C^n} \phi(|z|)d\mathcal{C}^n_{k,q}(\pi_{E_{k,q}}^*f;z)\\
				=&\frac{1}{2(k-2q)}\int_{E_{k,q}} \left[\int_{E_{k,q}^\perp}\phi(|(z,x,w)|)d\vol_{2n-k}(w)\right]|z|^2 d\MA_{E_{k,q}}(f;(z,x))\\
				=&\frac{1}{2(k-2q)}\int_{E_{k,q}} \mathcal{A}^{2n-k}\phi(|(z,x)|)|z|^2 d\MA_{E_{k,q}}(f;(z,x)),
			\end{align*}
			which shows the desired formula. The other cases follow with the same calculation.
		\end{proof}
	
		\begin{corollary}
			\label{corollary:uniqueness_representation}
			Let $1\le k\le 2n$. For $\phi_q\in C_c([0,\infty))$, $\max(0,k-n)\le q\le \lfloor\frac{k}{2}\rfloor$, and $\psi_q\in C_c([0,\infty))$, $\max(1,k-n)\le q\le \lfloor\frac{k}{2}\rfloor$, let $\mu\in\VConv_k(\C^n)^{\U(n)}$ be given by
			\begin{align*}
				\mu(f)=\sum_{q=\max(0,k-n)}^{\lfloor\frac{k}{2}\rfloor}\int_{\C^n}\phi_q(|z|)d\Theta^n_{k,q}(f;z)+\sum_{q=\max(1,k-n)}^{\lfloor\frac{k}{2}\rfloor}\int_{\C^n}\psi_q(|z|)d\Upsilon^n_{k,q}(f;z)
			\end{align*}
			Then the following are equivalent:
			\begin{enumerate}
				\item $\mu$ vanishes identically.
				\item $\pi_{E_{k,q}*}\mu=0$ for every $\max(0,k-n)\le q\le \lfloor\frac{k}{2}\rfloor$.
				\item $\phi_q=0$ and $\psi_q=0$ for every $q$.
			\end{enumerate}
		\end{corollary}
		\begin{proof}
			The implications 3. $\Rightarrow$ 1. and 1. $\Rightarrow$ 2. are trivial. Let us show that 2. implies 3. Combining Corollary \ref{corollary:vanishing_propertiesMeasures} and Corollary \ref{corollary:DensitiesRestrictionExtremalSmoothCase}, we obtain the following for $f\in\Conv(E_{k,q},\R)$:
			\begin{itemize}
				\item for $\max(1,k-n)\le q\le \lfloor\frac{k}{2}\rfloor$:
				\begin{align*}
					\pi_{E_{k,q}*}\mu(f)=\int_{E_{k,q}} \mathcal{A}^{2n-k}\phi_q (|(z,x)|)+\mathcal{A}^{2n-k}\psi_q(|(z,x)|)\left(\frac{|z|^2}{q}-\frac{|x|^2}{2(k-2q)}\right)d\MA_{E_{k,q}}(f;(z,x)),
				\end{align*}
				\item for $q=0$ if $k\le n$, or $q=\frac{k}{2}$ if $k$ is even:
				\begin{align*}
					\pi_{E_{k,q}*}\mu(f)=\int_{E_{k,q}} \mathcal{A}^{2n-k}\phi_q (|(z,x)|)d\MA_{E_{k,q}}(f;(z,x)).
				\end{align*}
			\end{itemize}
			If 2. holds, then these expressions vanish identically, so Lemma \ref{lemma:interpretation_fourier_maxdegree} implies for $q=0$ if $k\le n$, or $q=\frac{k}{2}$ if $k$ is even that
			\begin{align*}
				\mathcal{A}^{2n-k}\phi_q (|(z,x)|)=0\quad\text{for}~(z,x)\in E_{k,q}.
			\end{align*}
			Thus $\mathcal{A}^{2n-k}\phi_q=0$ in this case, and so $\phi_q=0$ as $\mathcal{A}^{2n-k}$ injective.\\
			If $\max(1,k-n)\le q\le \lfloor\frac{k}{2}\rfloor$, then 
			\begin{align*}
				\mathcal{A}^{2n-k}\phi_q (|(z,x)|)+\mathcal{A}^{2n-k}\psi_q(|(z,x)|)\left(\frac{|z|^2}{q}-\frac{|x|^2}{2(k-2q)}\right)=0 \quad\text{for} (z,x)\in E_{k,p}.
			\end{align*}
			If we evaluate this equation in $(z,0)$, $z\in \C^q$ and $(0,x)$, $x\in \R^{k-2q}$, we obtain the following two equations for any $t\in[0,\infty)$:
			\begin{align*}
				\mathcal{A}^{2n-k}\phi_q (t)+\mathcal{A}^{2n-k}\psi_q(t)\frac{t^2}{q}&=0,\\
				\mathcal{A}^{2n-k}\phi_q (t)-\mathcal{A}^{2n-k}\psi_q(t)\frac{t^2}{2(k-2q)}&=0.
			\end{align*}
			It is easy to see that this implies $\mathcal{A}^{2n-k}\phi_q=	\mathcal{A}^{2n-k}\psi_q=0$, so $\phi_q=\psi_q=0$ since $\mathcal{A}^{2n-k}$ is injective.
		\end{proof}

	\subsection{Proofs of the main results}
	In this section we prove Theorem \ref{maintheorem:smooth_unitarily_invariant_valuations} and use it to obtain Theorem \ref{maintheorem:vanishing} and Theorem \ref{maintheorem:DecompSmoothCase}.
	
	\begin{proof}[Proof of Theorem \ref{maintheorem:smooth_unitarily_invariant_valuations}]
		Let us first establish that any smooth valuation in $\VConv_k(\C^n)^{\U(n)}$ admits such an integral representation. We may restrict ourselves to the case $k\ge 1$ since any $0$-homogeneous valuation is constant and may thus written in terms of $\Theta^n_{0,0}(f)$, which is just the Lebesgue measure for any $f\in\Conv(\C^n,\R)$.\\
		By Proposition \ref{proposition:representation_by_invariant_form}, any smooth $\mu\in\VConv_k(\C^n)^{\U(n)}$ of degree $k\ge 1$ may be represented by a differential form $\omega\in (\Omega_c^{n-k}(\C^n)\otimes\Lambda^k(\C^n)^*)^{\U(n)}$ of degree $2n$ . Theorem \ref{theorem:representation_invariant_differential_form_theta} shows that $\omega$ is given by
		\begin{align*}
			\omega=&\gamma_1\wedge\gamma_2\wedge\beta_1\wedge\beta_2\wedge R(\theta_0,\theta_1,\theta_2,\omega_s)\left[|z|^2\right]\\
			&+\gamma_1\wedge\gamma_2 \wedge R_{\gamma_1\gamma_2}(\theta_0,\theta_1,\theta_2,\omega_s)\left[|z|^2\right]+\gamma_1\wedge\beta_1\wedge R_{\gamma_1\beta_1}(\theta_0,\theta_1,\theta_2,\omega_s)\left[|z|^2\right]\\
			&+\gamma_1\wedge\beta_2\wedge R_{\gamma_1\beta_2}(\theta_0,\theta_1,\theta_2,\omega_s)\left[|z|^2\right]+\gamma_2\wedge\beta_1\wedge R_{\gamma_2\beta_1}(\theta_0,\theta_1,\theta_2,\omega_s)\left[|z|^2\right]\\
			&+\gamma_2\wedge\beta_2\wedge R_{\gamma_2\beta_2}(\theta_0,\theta_1,\theta_2,\omega_s)\left[|z|^2\right]+\beta_1\wedge\beta_2 \wedge R_{\beta_1\beta_2}(\theta_0,\theta_1,\theta_2,\omega_s)\left[|z|^2\right]\\
			&+R'(\theta_0,\theta_1,\theta_2,\omega_s)\left[|z|^2\right]
		\end{align*}
		for suitable $R,R_{\gamma_1\gamma_2},\dots,R'\in C^\infty([0,\infty),\R[X,Y,Z,W])$. If the support of $\omega$ is contained in $B_R(0)\times\C^n\subset T^*\C^n$, we may choose $\psi\in C_c^\infty([0,\infty),[0,1])$ with $\psi\equiv 1$ on $[0,R^2]$. By multiplying the equation above with $\psi(|\cdot|^2)$, we may thus assume that the support of every coefficient of these polynomials is also bounded. As the differential cycle vanishes on multiples of $\omega_s$, we may further assume that these polynomials do not depend on $\omega_s$. Then according to Lemma \ref{lemma:Rumin_differential_2_1-forms},
		\begin{align*}
			\D\omega=&\D\left(\gamma_1\wedge\gamma_2\wedge\beta_1\wedge\beta_2\wedge R(\theta_0,\theta_1,\theta_2 )\left[|z|^2\right]\right)\\
			&+\D\left(\gamma_1\wedge\gamma_2 \wedge R_{\gamma_1\gamma_2}(\theta_0,\theta_1,\theta_2 )\left[|z|^2\right]\right)+0\\
			&+\D\left(\gamma_1\wedge\beta_2\wedge R_{\gamma_1\beta_2}(\theta_0,\theta_1,\theta_2 )\left[|z|^2\right]\right)+\D\left(\gamma_2\wedge\beta_1\wedge R_{\gamma_2\beta_1}(\theta_0,\theta_1,\theta_2 )\left[|z|^2\right]\right)\\
			&+0+\D\left(\beta_1\wedge\beta_2 \wedge R_{\beta_1\beta_2}(\theta_0,\theta_1,\theta_2 )\left[|z|^2\right]\right)\\
			&+\D\left(R'(\theta_0,\theta_1,\theta_2 )\left[|z|^2\right]\right).
		\end{align*}
		Using  Lemma \ref{lemma:Rumin_differential_2_1-forms}, we may replace the terms involving $\gamma_1\wedge\gamma_2$ and $\gamma_1\wedge\beta_2$ by a suitable function in $C^\infty_c([0,\infty)),\R[X,Y,Z])$. Similarly, Lemma \ref{lemma:replace_4_1-forms} lets us replace the term involving $\gamma_1\wedge\gamma_2\wedge\beta_1\wedge\beta_2$ by multiples of $\beta_1\wedge\gamma_2$ and $\beta_1\wedge\beta_2$. Thus there exist $R_1,R_2,R_3\in C_c^\infty([0,\infty),\R[X,Y,Z])$ such that
		\begin{align*}
			\D\omega=&\D (R_1(\theta_0,\theta_1,\theta_2)\left[|z|^2\right])+\D(\beta_1\wedge\gamma_2\wedge R_2(\theta_0,\theta_1,\theta_2)\left[|z|^2\right])\\
			&+\D(\beta_1\wedge\beta_2\wedge R_3(\theta_0,\theta_1,\theta_2)\left[|z|^2\right]).
		\end{align*}
		The first term can clearly be expressed in terms of the forms $\theta^n_{k,q}$, while the last two terms can be written in terms of the forms
		\begin{align*}
			&\beta_1\wedge \beta_2\wedge \theta^{n-1}_{k-2,q-1}-2\beta_1\wedge\gamma_2\wedge\theta^{n-1}_{k-1,q}=\upsilon^n_{k,q},\\
			&(k-2q)\beta_1\wedge\gamma_2\wedge\theta^{n-1}_{k-1,q}+q\beta_1\wedge\beta_2\wedge \theta^{n-1}_{k-2,q-1},
		\end{align*}
		where we may replace the second form by a term involving $\theta^{n}_{k,q}$ due to Lemma \ref{lemma:Rumin_differential_pointwise_relation}. In the extremal cases $q=0$ or $q=\lfloor\frac{k}{2}\rfloor$, both of the forms coincide and we may replace them by a term involving $\theta^n_{k,q}$ due to Lemma \ref{lemma:Rumin_differential_pointwise_relation}. We thus find $\phi_{q}\in C^\infty_c([0,\infty))$ for $\max(0,k-n)\le q\le\lfloor\frac{k}{2}\rfloor$ and $\psi_q\in C^\infty_c([0,\infty))$ for $\max(1,k-n)\le q\le \lfloor\frac{k-1}{2}\rfloor$ such that
		\begin{align*}
			\D\omega=&\sum_{q=\max(0,k-n)}^{\lfloor\frac{k}{2}\rfloor}\D(\phi_q(|z|^2)\theta^n_{k,q})+\sum_{q=\max(1,k-n)}^{\lfloor\frac{k-1}{2}\rfloor}\D(\psi_q(|z|^2)\upsilon^n_{k,q}).
		\end{align*}
		The Kernel Theorem \ref{theorem:Kernel_theorem} thus implies for all $f\in\Conv(\C^n,\R)$
		\begin{align}
			\label{equation:integralrep}
			\mu(f)=D(f)[\omega]=\sum_{q=\max(0,k-n)}^{\lfloor\frac{k}{2}\rfloor}D(f)[\phi_q(|z|^2)\theta^n_{k,q}]+\sum_{q=\max(1,k-n)}^{\lfloor\frac{k-1}{2}\rfloor}D(f)\left[\psi_q(|z|^2)\upsilon^n_{k,q}\right],
		\end{align}
		which up to multiplication with the constants $c_{n,k,q}$ is precisely the desired representation, compare \eqref{eq:MAOperators}. \\
		
		In order to see that such an integral representation is unique, note that it is sufficient to show that \eqref{equation:integralrep} vanishes identically if and only if $\phi_q=0$ and $\psi_q=0$ for every $q$. However, this is the case due to Corollary \ref{corollary:uniqueness_representation}.
	\end{proof}
	In \cite{Knoerrgeometricdecompositionunitarily2024}, we will need the following support restrictions on the densities.
	\begin{proposition}
		Let $1\le k\le 2n$. Let $\mu\in \VConv_k(\C^n)^{\U(n)}$ be the  smooth valuation given by
		\begin{align*}
			\mu(f)=\sum_{q=\max(0,k-n)}^{\lfloor\frac{k}{2}\rfloor}\int_{\C^n}\phi_q(|z|^2)d\Theta^n_{k,q}(f;z)+\sum_{q=\max(1,k-n)}^{\lfloor\frac{k-1}{2}\rfloor}\int_{\C^n}\psi_q(|z|^2)d\Upsilon^n_{k,q}(f;z)
		\end{align*}
		for $\phi_{q}\in C^\infty_c([0,\infty))$, $\max(0,k-n)\le q\le\lfloor\frac{k}{2}\rfloor$, and $\psi_q\in C^\infty_c([0,\infty))$, $\max(1,k-n)\le q\le \lfloor\frac{k-1}{2}\rfloor$. For $R>0$ the following are equivalent:
		\begin{enumerate}
			\item $\supp\mu\subset B_R(0)$.
			\item $\supp\phi_q,\supp\psi_q\subset [0,\sqrt{R}]$ for every $q$.
		\end{enumerate}
	\end{proposition}
	\begin{proof}
		The implication 2. $\Rightarrow$ 1. is trivial. Let us show that 1. implies 2. Set $\tilde{\phi}_q(t):=\phi_q(t^2)$ and $\tilde{\psi}_q(t)=\psi_q(t^2)$ for $t\ge 0$. As in the proof of  Corollary \ref{corollary:uniqueness_representation}, we have for $f\in\Conv(E_{k,q},\R)$:
		\begin{itemize}
			\item for $\max(1,k-n)\le q\le \lfloor\frac{k}{2}\rfloor$:
			\begin{align*}
				\pi_{E_{k,q}*}\mu(f)=\int_{E_{k,q}} \mathcal{A}^{2n-k}\tilde{\phi}_q (|\cdot|)+\mathcal{A}^{2n-k}\psi_q(|\cdot|)\left(\frac{|z|^2}{q}-\frac{|x|^2}{2(k-2q)}\right)d\MA_{E_{k,q}}(f),
			\end{align*}
			\item for $q=0$ if $k\le n$, or $q=\frac{k}{2}$ if $k$ is even:
			\begin{align*}
				\pi_{E_{k,q}*}\mu(f)=\int_{E_{k,q}} \mathcal{A}^{2n-k}\tilde{\phi}_q (|\cdot|)d\MA_{E_{k,q}}(f).
			\end{align*}
		\end{itemize}
		Since $\pi_{E_{k,q}*}\mu\in\VConv_k(E_{k,q})$ is a valuation of top degree which is supported on $B_R(0)\subset E_{k,q}$ by Proposition \ref{proposition:behavior-support-pushforward}, Corollary \ref{corollary:support-top-degree} implies that the corresponding densities are supported on $B_R(0)$ as well, i.e. the functions given 
			\begin{itemize}
			\item for $\max(1,k-n)\le q\le \lfloor\frac{k}{2}\rfloor$ by
			\begin{align}
				\label{eq:densityRestriction}
			(z,x)\mapsto \mathcal{A}^{2n-k}\tilde{\phi}_q (|(z,x)|)+\mathcal{A}^{2n-k}\psi_q(|(z,x)|)\left(\frac{|z|^2}{q}-\frac{|x|^2}{2(k-2q)}\right),
			\end{align}
			\item and for $q=0$ if $k\le n$, or $q=\frac{k}{2}$ if $k$ is even by
			\begin{align*}
			(z,x)\mapsto\mathcal{A}^{2n-k}\tilde{\phi}_q (|(z,x)|)
			\end{align*}
		\end{itemize}
		are supported on $B_R(0)$. In the second case, this implies that $\mathcal{A}^{2n-k}\tilde{\phi}_q$ is supported on $[0,R]$. Since this function is smooth, we may use the formula for the inverse Abel transform in \eqref{eq:inverseAbel} to see that $\tilde{\phi}_q$ is supported on $[0,R]$. Thus $\phi_q$ is supported on $[0,\sqrt{R}]$.\\
		In the first case, we evaluate \eqref{eq:densityRestriction} in $(z,0)\in E_{k,q}$ and $(0,x)\in E_{k,q}$ to obtain the equations
		\begin{align*}
			\mathcal{A}^{2n-k}\tilde{\phi}_q (t)+\mathcal{A}^{2n-k}\tilde{\psi}_q(t)\frac{t^2}{q}&=0,\quad t>R\\
			\mathcal{A}^{2n-k}\tilde{\phi}_q (t)-\mathcal{A}^{2n-k}\tilde{\psi}_q(t)\frac{t^2}{2(k-2q)}&=0,\quad t>R.
		\end{align*}
		We may subtract the two equations to obtain $\mathcal{A}^{2n-k}\tilde{\psi}_q(t)=0$ for $t>R$, which implies $\mathcal{A}^{2n-k}\tilde{\phi}(t)=0$ for $t>R$. As before, we may use the inverse Abel transform in \eqref{eq:inverseAbel} to see that $\tilde{\phi}_q$ and $\tilde{\psi}_q$ are supported on $[0,R]$, so $\phi_q$ and $\psi_q$ are supported on $[0,\sqrt{R}]$.
	\end{proof}
	Next, we prove Theorem \ref{maintheorem:DecompSmoothCase} in the following form.
	\begin{theorem}
		\label{theorem:DirectSumSmooth}
		Let $0\le k\le 2n$. We have a direct sum decomposition 
		\begin{align*}
			\VConv_k(\C^n)^{\U(n),sm}=\bigoplus_{q=\max(0,k-n)}^{\lfloor\frac{k}{2}\rfloor}\VConv_{k,q}(\C^n)^{\U(n),sm}.
		\end{align*}
		In particular, for $\max(0,k-n)\le p,q\le\lfloor\frac{k}{2}\rfloor$ with $p\ne q$:
		\begin{align*}
			\VConv_{k,p}(\C^n)^{\U(n),sm}\cap\VConv_{k,q}(\C^n)^{\U(n),sm}=0.
		\end{align*} 		
	\end{theorem}
	\begin{proof}
		For $k\in\{0,1,2n-1,2n\}$, we have $\VConv_k(\C^n)^{\U(n),sm}=\VConv_{k,q}(\C^n)^{\U(n),sm}$ for $q=\max(0,k-n)=\lfloor\frac{k}{2}\rfloor$, so there is nothing to prove. We may thus in particular assume that $2\le k\le 2n-2$.\\
		We will first show that the sum of the spaces $\VConv_{k,q}(\C^n)^{\U(n),sm}$ is direct. Let $p\ne q$ and $\mu\in\VConv_{k,p}(\C^n)^{\U(n),sm}\cap \VConv_{k,q}(\C^n)^{\U(n),sm}$ be a valuation. We have to show that $\mu=0$.\\
		Theorem \ref{maintheorem:smooth_unitarily_invariant_valuations} implies that there exist $\phi_{q}\in C^\infty_c([0,\infty))$ for $\max(0,k-n)\le q\le\lfloor\frac{k}{2}\rfloor$ and $\psi_q\in C^\infty_c([0,\infty))$ for $\max(1,k-n)\le q\le \lfloor\frac{k-1}{2}\rfloor$ such that
		\begin{align}
			\label{eq:repSmoothVal}
			\mu(f)=\sum_{q=\max(0,k-n)}^{\lfloor\frac{k}{2}\rfloor}\int_{\C^n}\phi_q(|z|^2)d\Theta^n_{k,q}(f;z)+\sum_{q=\max(1,k-n)}^{\lfloor\frac{k-1}{2}\rfloor}\int_{\C^n}\psi_q(|z|^2)d\Upsilon^n_{k,q}(f;z)
		\end{align}
		for all $f\in\Conv(\C^n,\R)$. Since $\mu\in\VConv_{k,p}(\C^n)^{\U(n),sm}\cap \VConv_{k,q}(\C^n)^{\U(n),sm}$, we have $\pi_{e_{k,q'}*}\mu=0$ for every $\max(0,k-n)\le q'\le\lfloor\frac{k}{2}\rfloor$, so Corollary \ref{corollary:uniqueness_representation} shows that $\mu=0$.\\
		It remains to see that  $\VConv_k(\C^n)^{\U(n),sm}=\bigoplus_{q=\max(0,k-n)}^{\lfloor\frac{k}{2}\rfloor}\VConv_{k,q}(\C^n)^{\U(n),sm}$. First, the right hand side is obviously contained in the left hand side. For the other inclusion, let $\mu\in\VConv_k(\C^n)^{\U(n),sm}$ be given. We may again use Theorem \ref{maintheorem:smooth_unitarily_invariant_valuations} to see that $\mu$ admits a representation as in \eqref{eq:repSmoothVal}.		
		Since the valuations
		\begin{align*}
			&f\mapsto \int_{\C^n}\phi_q(|z|^2)d\Theta^n_{k,q}(f;z), &&f\mapsto \int_{\C^n}\psi_q(|z|^2)d\Upsilon^n_{k,q}(f;z)
		\end{align*}
		belong to $\VConv_{k,q}(\C^n)^{\U(n),sm}$ by Corollary \ref{corollary:measuresInduceVConkq}, we obtain $\mu\in \bigoplus_{q=\max(0,k-n)}^{\lfloor\frac{k}{2}\rfloor}\VConv_{k,q}(\C^n)^{\U(n),sm}$, which completes the proof.
	\end{proof}

Finally, we obtain Theorem \ref{maintheorem:vanishing} from the classification of smooth $\U(n)$-invariant valuations by approximation. 

\begin{proof}[Proof of Theorem \ref{maintheorem:vanishing}]
	Let $\mu_0\in\VConv_k(\C^n)^{\U(n)}$ and assume that
	\begin{align*}
		\pi_{E_{k,q}*}\mu_0=0
	\end{align*}
	for all $0\le \max(0,k-n)\le q\le \lfloor\frac{k}{2}\rfloor$. We have to show that $\mu_0=0$. This is trivial for $k=0$, since $\mu_0$ is constant in this case, so we may assume that $1\le k\le 2n$.\\
	
	Let $V_{k,q}\subset \Gr_k(\C^n)$ denote the orbit of $E_{k,q}$ under $\U(n)$ and set
	\begin{align*}
		W:=\left\{\mu\in\VConv_k(\C^n):\pi_{E*}\mu=0~\text{for every}~E\in V_{k,q},\max(0,k-n)\le q\le \left\lfloor\frac{k}{2}\right\rfloor\right\}.
	\end{align*}
	This is a $\U(n)$- and translation invariant closed subspace of $\VConv_k(\C^n)$. Moreover, since $\mu_0$ is $\U(n)$-invariant, $\mu_0\in W$.\\
	
	Since $W$ is closed and translation invariant, Theorem \ref{theorem:SquentialDensitySmoothVal} implies that there exists a sequence $(\mu_j)_j$ of smooth valuations which belong to $W$ and converge to $\mu_0$. Define
	\begin{align*}
		\tilde{\mu}_j(f):=\int_{\U(n)}\mu_j(f\circ g)dg,
	\end{align*} 
	where we integrate with respect to the Haar probability measure. Then as in the proof of \cite[Proposition 6.6]{KnoerrSmoothvaluationsconvex2024}, $(\tilde{\mu}_j)_j$ is a sequence of smooth $\U(n)$-invariant valuations converging to $\mu_0$. If $E\in V_{k,q}$ for some $\max(0,k-n)\le q\le \lfloor \frac{k}{2}\rfloor$ and $f\in \Conv(E,\R)$, then $\pi_E^*f\circ g=\pi_{g^{-1}E}^*(f\circ g)$ where $f\circ g\in\Conv(g^{-1}E,\R)$ with $g^{-1}E\in V_{k,q}$ for every $g\in\U(n)$, and so 
	\begin{align*}
		\tilde{\mu}_j(\pi_E^*f):=\int_{\U(n)}\mu_j(\pi_{g^{-1}E}^*(f\circ g))dg=0,
	\end{align*}
	because $\mu_j\in W$. In particular, $\tilde{\mu}_j$ is a smooth valuation with $\pi_{E_{k,q}*}\tilde{\mu}_j=0$ for every $\max(0,k-n)\le q\le \lfloor\frac{k}{2}\rfloor$. Theorem \ref{maintheorem:smooth_unitarily_invariant_valuations} and Corollary \ref{corollary:uniqueness_representation} thus show that $\tilde{\mu}_j$ vanishes identically. As $(\tilde{\mu}_j)_j$ converges to $\mu_0$, this implies $\mu_0=0$, which completes the proof.
\end{proof}
\bibliographystyle{abbrv}
\bibliography{../../../library/library.bib}

\Addresses

\end{document}